\renewcommand{\roman}[1]{%
  \textup{\uppercase\expandafter{\romannumeral#1}}%
}
\renewcommand{\restriction}{\mathord{\upharpoonright}}
\newcommand{\D}{ \, \mathrm{d}}
\DeclareMathAlphabet\EuScript{U}{eus}{m}{n}
\providecommand{\Real}{\ensuremath \Re\mathfrak{e}} %
\DeclareMathAlphabet\mathcalboondox{U}{BOONDOX-calo}{m}{n} %
\providecommand{\Tiso}{\ensuremath \mathsf{P}}
\providecommand{\transpose}{\ensuremath \intercal} %
\newcommand\SmallMatrix[1]{{%
    \tiny\arraycolsep=0.3\arraycolsep\ensuremath{\begin{bmatrix}#1\end{bmatrix}}}}
\theoremstyle{plain}
\newtheorem{theorem}{Theorem}[section]  %
\newtheorem{cor}[theorem]{Corollary}    %
\newtheorem{lemma}[theorem]{Lemma}
\newtheorem{prop}[theorem]{Proposition}
\theoremstyle{definition}
\newtheorem{define}[theorem]{Definition}
\theoremstyle{remark}
\newtheorem{remark}[theorem]{Remark}
\newtheorem{example}[theorem]{Example}
\numberwithin{equation}{section} %
\title{The metric for matrix degenerate Kato square root operators}
\author[Brocchi]{Gianmarco Brocchi}
\address{
  Háskóli Íslands \\ %
  Tæknigar{\dh}ur, Dunhagi 5 \\ %
  107 Reykjavík \\
  Iceland}
\email{gianmarco@hi.is}
\author[Rosén]{Andreas Rosén}
\address{
  Mathematical Sciences \\
  Chalmers University of Technology and the University of Gothenburg \\
  SE-412 96 Göteborg \\
  Sweden}
\email{andreas.rosen@chalmers.se}
\begin{document}
\subjclass[2020]{42B37, 35J70, 58J32, 35J56, 35J57, 47B12}
\keywords{Matrix weight, Riemannian metric, anisotropically degenerate coefficients,
holomorphic functional calculus, weighted norm inequalities}
\begin{abstract}                %
  We prove a Kato square root estimate with anisotropically degenerate matrix coefficients.
  We do so by doing the harmonic analysis using an auxiliary Riemannian metric adapted to the operator.
  We also derive $L^2$-solvability estimates for boundary value problems
  for divergence form elliptic equations with matrix degenerate coefficients.
  Main tools are chain rules and Piola transformations
  for fields in matrix weighted $L^2$ spaces,
  under $W^{1,1}$ homeomorphism.
\end{abstract}

\maketitle
\tableofcontents%

\section*{Introduction}

Our point of departure is
the celebrated Kato square root estimate
\begin{equation}\label{eq:Kato_square_root_estimate}
  \lVert \sqrt{ - \mathrm{div} A \nabla  } u \rVert_{L^2(\mathbb{R}^d)} \eqsim \lVert \nabla u \rVert_{L^2(\mathbb{R}^d)}
\end{equation}
proved by \cite{SolKato2002},
where the complex-valued coefficient matrix $A$ is
assumed only
to be bounded, measurable and accretive.
After its formulation by Tosio Kato in \cite{zbMATH03183358}, %
\cite[p. 332]{KatoBook}, %
already the one-dimensional result, $d = 1$,
was only solved 20 years later by Coifman, McIntosh, and Meyer \cite{CMcM82}.
The higher dimensional result \cite{SolKato2002} in $d \ge 2$
took an additional 20 years,
and a reason was that
the non-surjectivity of $\nabla$ requires a more elaborated stopping time argument
in the Carleson measure estimate at the heart of the proof.
That the estimate \eqref{eq:Kato_square_root_estimate} is beyond the scope of classical Calderón--Zygmund theory for $d \ge 2$
is clear from the fact that, in general, the Kato square root estimate may hold in $L^p(\mathbb{R}^d)$
only for $p$ in a small interval around $p=2$,
depending on the matrix $A$.    %
See \cite[page 7]{auscher2007necessary}.

In this paper, we consider the extension of \eqref{eq:Kato_square_root_estimate}
to weighted $L^2$ estimates. Cruz-Uribe and Rios \cite{zbMATH06429144}
proved the weighted Kato square root estimate
\begin{equation}\label{eq:weighted_Kato}
  \lVert \sqrt{ - (1/w) \mathrm{div} A \nabla  } u \rVert_{L^2(\mathbb{R}^d, w)} \eqsim \lVert \nabla u \rVert_{L^2(\mathbb{R}^d, w)}
\end{equation}
for Muckenhoupt weight $w \in A_2(\mathbb{R}^d)$
and degenerate coefficient matrices $A$ satisfying
\begin{equation*}
   \Real \langle A(x) v,v \rangle \gtrsim w(x) \lvert v \rvert^2  \, , \quad \lvert A(x) \rvert \lesssim w(x) \quad \text{for all } x \in \mathbb{R}^d , v \in \mathbb{C}^d.
\end{equation*}
It should be noted that Rubio de Francia %
extrapolation is not applicable here, since the operator $- (1/w) \mathrm{div} A \nabla$
and the $L^2(w)$-norm are coupled.
However, under additional assumption on $w$,
Cruz-Uribe, Martell, and Rios \cite{CMR2018}
proved \eqref{eq:weighted_Kato} with degenerate coefficients also
in the unweighted $L^2(\mathbb{R}^d)$-norm.

We shall however follow a different path,
where we seek to decouple $A$ from $w$ in the operator  $- (1/w) \mathrm{div} A \nabla$.
To this end, we consider more general \emph{anisotropically} degenerate elliptic operators $- (1/a) \mathrm{div} A \nabla$,
where the complex-valued scalar function $a(x)$ is controlled by a scalar weight $\mu$, as
\begin{equation}\label{eq:condition_on_a}\tag{a}
  \Real \, a(x) \gtrsim \mu(x)  \, , \quad \lvert a(x) \rvert \lesssim \mu(x) 
\end{equation}
and the complex matrix function $A(x)$ is controlled as
\begin{equation}\label{eq:condition_on_A}\tag{A}
  \Real \langle A(x) v, v\rangle \gtrsim \langle W(x) v, v\rangle \, , \quad \lvert W(x)^{-1/2} A(x) W(x)^{-1/2} \rvert \lesssim 1
\end{equation}
by a matrix weight $W$, meaning that $W(x)$ is a positive definite matrix at almost every point $x \in \mathbb{R}^d$.
The second condition in \eqref{eq:condition_on_A} is equivalent to
\begin{equation*}
  \langle A(x) v, v\rangle \lesssim \langle W(x) v, v\rangle \quad \text{for all } x \in \mathbb{R}^d , v \in \mathbb{C}^d.
\end{equation*}
Note carefully that for such degenerate elliptic operators  $- (1/a) \mathrm{div} A \nabla$,
not only the size of the two coefficients $a$ and $A$ can differ unboundedly, %
but the size of $A(x)v$ can vary unboundedly between different directions $v \in \mathbb{C}^d$, $\lvert v \rvert = 1$,
at $x \in \mathbb{R}^d$.
\cref{fig:ellipses1} and \cref{fig:ellipses2} below show ellipses centred at a point $x$
whose principal axes are the eigenvectors of the matrix $A(x)$.
These are two examples of such anisotropically degenerate matrices $A(x)$,
which are discussed in more details in \cref{ex:ellipses} and \cref{ex:example2}.
\begin{figure}[th]
  \includegraphics[width=8cm]{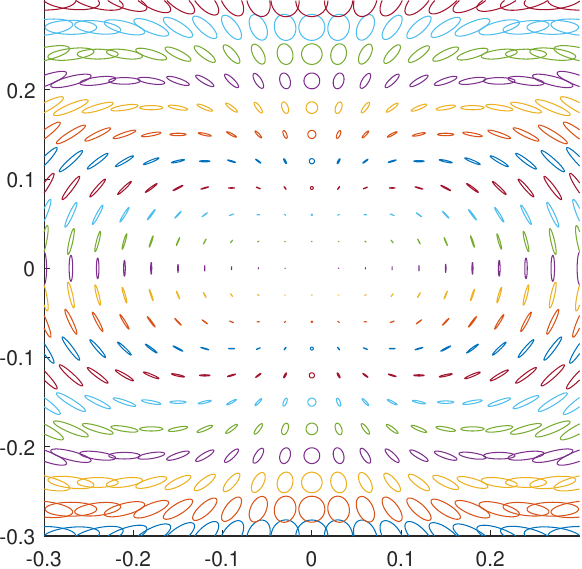}
  \caption{Geodesic disks in the metric of \cref{ex:ellipses} are ellipses whose principal axes are the
    eigenvectors of the matrix $A(x)$. These ellipses shrink anisotropically towards the origin.}
  \label{fig:ellipses1}
\end{figure}
\begin{figure}[th]
  \includegraphics[width=8cm]{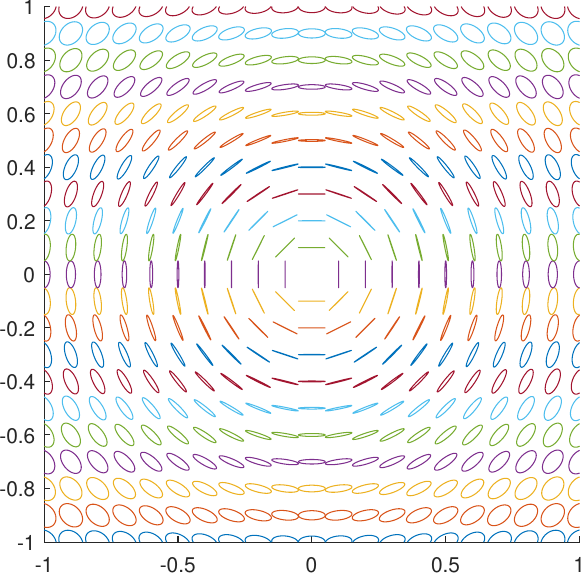}
  \caption{Geodesic disks in the metric of \cref{ex:example2} for $a = 1$ are ellipses with increasing eccentricity.}
  \label{fig:ellipses2}
\end{figure}

The natural norms for the operator $- (1/a) \mathrm{div} A \nabla$ appear
using the standard duality proof of the Kato square root estimate
in the special case of self-adjoint coefficients $a = \mu$ and $A = W$:
\begin{align*}
  \lVert \sqrt{ - (1/\mu) \mathrm{div} W \nabla  } & u \rVert_{L^2(\mu)}^2 \\
  & = \langle -(1/\mu) \mathrm{div} W \nabla u, u \rangle_{L^2(\mu)}
  = \langle W \nabla u, \nabla u \rangle_{L^2(\mathbb{R}^d)} \eqqcolon \lVert \nabla u \rVert_{L^2(\mathbb{R}^d, W)}^2.
\end{align*}
Note that the matrix-weighted space $L^2(\mathbb{R}^d, W)$ does not see the scalar weight $\mu$.
Our problem is thus to understand under what conditions on $\mu$ and $W$ 
the matrix-weighted Kato square root estimate
\begin{equation}\label{eq:matrix_weighted_Kato}
  \lVert \sqrt{ - (1/a) \mathrm{div} A \nabla  } u \rVert_{L^2(\mathbb{R}^d, \mu)} \eqsim \lVert \nabla u \rVert_{L^2(\mathbb{R}^d, W)}
\end{equation}
holds for general $a$ and $A$ satisfying \eqref{eq:condition_on_a} and \eqref{eq:condition_on_A} respectively.
We study \eqref{eq:matrix_weighted_Kato} using a framework of first-order differential operators,
which goes back to \cite{zbMATH01180060} and \cite{AKM^c}. %
The approach consists in proving boundedness of the $H^\infty$ functional calculus
for perturbations of a first-order self-adjoint differential operator $D$,
perturbed by a bounded and accretive multiplication operator $B$.
In our context, we set
\begin{equation}\label{eq:first_appearance_of_D}
  D =
  \begin{bmatrix}
    0 & - (1/\mu) \mathrm{div} W \\
    \nabla & 0 
  \end{bmatrix}\, , \quad
  B =
  \begin{bmatrix}
    \mu/a & 0 \\
    0 & W^{-1} A
  \end{bmatrix}.
\end{equation}
The operators $D$ and $B$ act on the Hilbert space $\mathcalboondox{H} = L^2(\mu) \oplus L^2(W)$.
The perturbed operator
\begin{equation}\label{eq:perturbed_operator}
  B D =
  \begin{bmatrix}
    0 & - (1/a) \mathrm{div} W \\
    W^{-1} A \nabla & 0 
  \end{bmatrix}
\end{equation}
has spectrum in a bisector around the real line,
and we show the boundedness of the $H^\infty$ functional calculus for $B D$,
as defined in \cref{subsec:HinftyFC}.
The Kato square root estimate \eqref{eq:matrix_weighted_Kato} then follows from the boundedness of the sign function of $B D$,
namely from the estimate
\begin{equation}
  \label{eq:fun_calc_estimate}
  \left\lVert \sqrt{(B D)^2}
  \SmallMatrix{u \\ 0}  
  \right\rVert_{\mathcalboondox{H}} \eqsim \Big\lVert B D
  \SmallMatrix{u \\ 0}
  \Big\rVert_{\mathcalboondox{H}}
\end{equation}
since $\sqrt{(B D)^2} = \mathrm{sgn}(B D) B D$
and
\begin{equation*}
  \sqrt{(B D)^2} =
  \begin{bmatrix}\textstyle
    \sqrt{-\frac{1}{a}\mathrm{div}A\nabla} & 0 \\
    0 & \sqrt{-W^{-1} A \nabla \frac{1}{a}\mathrm{div}W}
  \end{bmatrix},
\end{equation*}
while the right hand side of \eqref{eq:fun_calc_estimate} is equivalent to $\lVert \nabla u \rVert_{L^2(W)}$ as desired.

The proof of \eqref{eq:Kato_square_root_estimate} from \cite{SolKato2002} uses a local $T b$ theorem for square functions,
with test functions $b$ constructed using the elliptic operator, which reduces the problem to a Carleson measure estimate.
In the isotropically degenerate case with $W = \mu I$,
boundedness of the $H^\infty$ functional calculus of $B D$,
and in particular \eqref{eq:fun_calc_estimate},
was proved in \cite{ARR}.
Important to note is that the proof in \cite{ARR}
does not require $B$ to be block diagonal,
as compared to the one in \cite{CUR2015}, %
as \cite{ARR} uses a more elaborate double stopping argument
for test function and weight.
Our results in the present paper do not require $B$ to be block diagonal either.
Non-block diagonal $B$ are important in applications to boundary value problems:
see \cite{AAMc2010}, \cite{AA2011} and references therein.
We extend \cite[\S 4]{AMR} to anisotropic degenerate elliptic equations in \cref{sec:BVPs}.

When trying to prove boundedness of the $H^\infty$ functional calculus for our operator $B D$ from \eqref{eq:perturbed_operator},
following the local $Tb$ argument in \cite{ARR},
one soon realises that
the main obstacle when $W \neq \mu I$
is the $L^2$ off-diagonal estimates for the resolvents of $B D$.
In all previous works, one has an estimate %
\begin{equation}
  \label{eq:off-diagonal_for_resolvent}
  \lVert (I + i t B D)^{-1} u \rVert_{L^2(F)} \lesssim \eta\Big( \frac{\mathrm{dist}(E,F)}{t} \Big) \, \lVert u \rVert_{L^2(E)}
\end{equation}
with $\eta(x)$ rapidly decaying to $0$ as $x \to \infty$
and $\mathrm{dist}(E,F)$ being the distance between sets $E,F \subseteq \mathbb{R}^d$.
So the resolvents are not only bounded,
but act almost locally at scale $t$. %
When $W \neq \mu I$, this crucial estimate in the local $Tb$ theorem may fail.
Indeed, the commutator estimate used in the proof of \eqref{eq:off-diagonal_for_resolvent} fails,
as it requires the boundedness of
\begin{equation*}
  [D, \eta] =
  \begin{bmatrix}
    0 & - \frac{1}{\mu}[\mathrm{div},\eta] W \\
    [\nabla,\eta] & 0 
  \end{bmatrix}.
\end{equation*}
This is a bounded multiplier on $L^2(\mu) \oplus L^2(W)$, with norm $\lVert \nabla \eta \rVert_{L^\infty}$, only if $\lvert W \rvert \lesssim \mu$.
But even assuming this latter bound, %
it is still unclear to us how to extend the remaining part of the Euclidean proof from \cite{ARR}
which seems to require non-trivial two-weight bounds.

The way we instead resolve this problem
is to replace the Euclidean metric with a Riemannian metric $g$ adapted to the operator $B D$.
We show in \cref{sec:higher_dim}
that the Euclidean operator $B D$ on $L^2(\mathbb{R}^d,\mu) \oplus L^2(\mathbb{R}^d;\mathbb{C}^d, W)$ is in fact similar
to an operator $B_M D_M$ acting on $L^2(M,\nu) \oplus L^2(TM,\nu I)$
for a auxiliary Riemannian manifold $M$ with metric $g$
and a single scalar weight $\nu$ associated with $\mu,W$. %
\begin{figure}[h]\label{fig:diagram-similarity}
  \begin{tikzcd}
    \mathcalboondox{H}_M \coloneqq L^2(M,\nu) \oplus L^2(TM,\nu I) \arrow[d, "\Tiso"] \arrow[r, "D_M B_M "] & \mathcalboondox{H}_M \\
    \mathcalboondox{H} \coloneqq L^2(\mathbb{R}^d,\mu) \oplus L^2(\mathbb{R}^d;\mathbb{C}^d, W) \arrow[r, "D B "] & \mathcalboondox{H} \arrow[u, "\Tiso^{-1}"]
  \end{tikzcd}
  \caption{We will use a unitary map $\Tiso$ and its inverse,
    introduced in \cref{sec:1dim} and defined in \eqref{eq:definition_rubber_band_higher_dim}.}
\end{figure}

Note that the scalar weight $\nu$ determines the norms \emph{both} on scalar \emph{and} vector functions.
Thus we have reduced to the situation in \cite{ARR}, %
but with $\mathbb{R}^d$ replaced by a manifold $M$.
The Euclidean proof in \cite{ARR} has been generalised to a class of manifolds in \cite{AMR},
notably those with positive injectivity radius and Ricci curvature bounded from below.
Applying \cite{AMR} to $B_M D_M$ gives boundedness of its $H^\infty$ functional calculus and,
via similarity, also for 
our anisotropically degenerate operator $B D$ on $\mathbb{R}^d$.
This in particular shows the matrix-weighted Kato square root estimate \eqref{eq:matrix_weighted_Kato}
for a class of weights $(\mu,W)$ determined by properties of $(g,\nu)$.
The examples at the end of \cref{sec:higher_dim} 
show that indeed this class covers weights beyond \cite{ARR}.
In a forthcoming paper,
we shall relax further the hypotheses on the auxiliary manifold $(M,g)$.
\section*{Preliminaries}%

\subsection*{Notations}
For two quantities $X,Y \ge 0$,
the expression $X \lesssim Y$ means that there exists a finite, positive constant $C$ such that $X \le C Y$.
The expression $X \gtrsim Y$ means $Y \lesssim X$.
When both expressions hold simultaneously, with possibly different constants,
we will write $X \eqsim Y$.
Given a matrix $W$ %
the quantities $\lvert W \rvert$ and $\lVert W \rVert_{\mathrm{op}}$ denote any of the equivalent matrix norms of $W$.

As discussed in the introduction,
the Kato square root estimate follows from
the boundedness of functional calculus for a bisectorial operator $B D$.
Here we recall these concepts.

\subsection{Bisectorial operators}
For an angle $\theta \in [0,\pi/2)$, consider the closed bisector
\begin{equation*}
  S_{\theta} \coloneqq \{ z \in \mathbb{C} \, :\, \lvert \mathrm{arg}(z) \rvert \le \theta \} \cup \{ 0 \} \cup \{ z \in \mathbb{C} \, :\, \lvert \mathrm{arg}(- z) \rvert \le \theta \} .
\end{equation*}

\begin{define}[Bisectorial operator]
  A closed, densely defined operator $D$ on a Hilbert space
  is bisectorial if there exists an angle $\theta \in [0,\pi/2)$ such that
  \begin{itemize}
  \item the spectrum $\sigma(D)$ is contained in the bisector $S_{\theta}$;
  \item outside $S_{\theta}$ we have resolvent bounds: $\lVert (\lambda I - D)^{-1} \rVert \lesssim 1/\mathrm{dist}(\lambda, S_\theta)$.
  \end{itemize}
\end{define}
Given a densely defined operator $D$, its domain will be denoted by $\mathsf{dom}(D)$. 
If $D$ is bisectorial, we have the topological (not necessarily orthogonal) splitting \cite[Proposition 3.3 (ii)]{AAMc2010}
\begin{equation*}              
  \mathcalboondox{H} = \mathsf{ker}(D) \oplus \overline{\mathsf{im}(D)}
\end{equation*}
where $\mathsf{ker}(D) \coloneqq \{ u \in \mathsf{dom}(D), Du = 0 \}$ is always closed and $\mathsf{im}(D) \coloneqq \{ Du \in \mathcalboondox{H}, u \in \mathsf{dom}(D) \}$. 
In particular, restricting $D$ to the closure of its range gives an injective bisectorial operator.

\subsection{Bounded holomorphic functional calculus}\label{subsec:HinftyFC}
Given $\theta' > \theta$, with $\theta',\theta \in [0,\pi/2)$,
let $\mathring{S_{\theta'}}$ be the interior of the bisector $S_{\theta'}$.
Denote by $H^\infty(\mathring{S_{\theta'}})$ the space of bounded holomorphic functions on $\mathring{S_{\theta'}}$.
Given an injective operator $D$ which is bisectorial on $S_{\theta}$, 
we say that $D$ has bounded $H^\infty$ functional calculus on $\mathring{S_{\theta'}}$ if for all function $f \in H^\infty(\mathring{S_{\theta'}})$
we can define a bounded operator $f(D)$ with norm bound
\begin{equation*}
  \lVert f(D) \rVert_{\mathcalboondox{H} \to \mathcalboondox{H}} \lesssim %
  \lVert f \rVert_{L^\infty(\mathring{S_{\theta'}})}.
\end{equation*}

For a non-injective operator $D$,
the $H^\infty$ functional calculus can be extended to the whole space $\mathcalboondox{H}$
by setting $f(D)\restriction_{\mathsf{ker}(D)} \coloneqq f(0) I\restriction_{\mathsf{ker}(D)}$,
for $f \colon \{0\} \cup \mathring{S_{\theta'}} \to \mathbb{C}$ such that $f\restriction_{\mathring{S_{\theta'}}} \in H^{\infty}(\mathring{S_{\theta'}})$.

\subsection{Quadratic estimates}%
Let $\psi$ be any function in $H^\infty(\mathring{S_{\theta'}})$
which is non-vanishing on both sectors and
decaying as $\lvert \psi(\zeta) \rvert \lesssim \lvert \zeta \rvert^s (1+\lvert \zeta \rvert^{2s})^{-1}$ for some $s>0$.
We call the class of such functions $\Psi(\mathring{S_{\theta'}})$.
A bisectorial operator $D$ acting on a Hilbert space $\mathcalboondox{H}$ satisfies quadratic estimates  if
\begin{equation}\label{eq:QE_prelim}
  \Big( \int_0^\infty \lVert \psi_t(D) u \rVert_{\mathcalboondox{H}}^2 \frac{\D{t}}{t} \Big)^{1/2} \lesssim \lVert u \rVert_{\mathcalboondox{H}} 
\end{equation}
holds for all $u \in \mathcalboondox{H}$ and all $\psi \in \Psi(\mathring{S_{\theta'}})$,
where $\psi_t(\zeta) \coloneqq \psi(t \zeta)$.
If $D$ satisfies \eqref{eq:QE_prelim} for one such $\psi$, then \eqref{eq:QE_prelim} holds for all $\psi \in \Psi(\mathring{S_{\theta'}})$.
Then, for simplicity, we will take $\psi(\zeta) = \zeta / (1+ \zeta^{2})$.
Bisectorial operators $D$, for which both $D$ and $D^\star$ satisfy the quadratic estimates \eqref{eq:QE_prelim}
have a bounded $H^\infty$ functional calculus. See \cite[\S 3.(E)]{zbMATH00923920}, where this is shown for sectorial operators.
The extension to bisectorial operators is straightforward. See also \cite[\S 6.1]{AA2011} for a short derivation of the needed estimates.

\subsection{Weights}
A scalar weight is a function $x \mapsto \mu(x)$ which is positive almost everywhere,
while a matrix weight is a matrix-valued function $x \mapsto W(x)$ such that
$W(x)$ is symmetric, positive definite matrix at almost every $x$.
We will consider weights on $\mathbb{R}^d$ and, more generally, on a complete Riemannian manifold $M$ with Riemannian measure $\mathrm{d}y$.

\begin{define}\label{def:W-boundedness_and_accretivity}
  Let $W$ be a matrix weight.
  A multiplication operator $B$ is said to be $W$-bounded if
  \begin{equation*}
    \lvert W^{1/2} B W^{-1/2} \rvert \lesssim 1 \quad \text{ a.e. }
  \end{equation*}
  and it is said to be $W$-accretive if
  \begin{equation*}
    \Real \langle W^{1/2} B W^{-1/2} v , v \rangle \gtrsim \lvert v \rvert^2 \quad \text{ a.e. and } \forall v \in \mathbb{C}^d.
  \end{equation*}
\end{define}
Note that
\begin{itemize}
\item $B$ is $W$-bounded if and only if
  the map $v \mapsto B v$ is bounded in the norm $v \mapsto \lvert W^{1/2} v\rvert$
\item $B$ is $W$-accretive if and only if
  the map $v \mapsto B v$ is accretive with respect to the inner product $\langle W v , v\rangle$ associated to the norm $\lvert W^{1/2} v\rvert$.
\item For scalar weights $W=w$ this reduces to standard unweighted notions of boundedness and accretivity.
\end{itemize}
When $W$ is a block diagonal matrix $\begin{bsmallmatrix} 
\mu & 0 \\
0 & w
\end{bsmallmatrix}$,
we will use the notation $(\mu \oplus w)$,
and say that a multiplication operator is $(\mu \oplus w)$-bounded and $(\mu \oplus w)$-accretive.

A special class of weights are Muckenhoupt weights, which are defined in terms of averages.
Let $B = B(x,r)$ be a geodesic ball of radius $r>0$ centred at $x$.
If $\lvert B \rvert$ denotes the Riemannian measure of a ball $B$,
the average of a scalar weight $\nu$ over $B$ is $\fint_B \nu \D{y} \coloneqq \lvert B \rvert^{-1} \int_B \nu \D{y}$.
\begin{define}[Muckenhoupt $A_2^R$ weights]\label{def:A_2^R}                  %
  Let $R>0$ be fixed.
  A scalar weight $\nu \colon M \to [0,\infty]$ belongs to the Muckenhoupt class $A_2^R(M)$,
  with respect to the Riemannian measure $\mathrm{d}y$, if
  \begin{equation*}
    [\nu]_{A_2^R} \coloneqq \sup_{\substack{y_0 \in M \\ r < R}} \Big(\fint_{B(y_0,r)} \nu(y) \D{y} \Big) \Big(\fint_{B(y_0,r)} \frac1{\nu(y)} \D{y} \Big) < \infty .
  \end{equation*}
  We say that a weight $\nu \in A_2(M)$ if
  $[\nu]_{A_2} \coloneqq \sup_{R > 0} [\nu]_{A_2^R} $ is finite.
\end{define}

We also introduce local Muckenhoupt weights,
as these are used to apply dominated convergence locally,
for example in proving the density of smooth functions in matrix-weighted Sobolev spaces.
Note that we do not use the $A_2^{\mathrm{loc}}$ property quantitatively.

\begin{define}[Local Muckenhoupt weights]\label{def:local_A_2}
  Let $\Omega \subseteq \mathbb{R}^d$ be an open set, and let $\mu$ and $W$ be a scalar and a matrix weight, respectively.
  We say that $\mu$ is in $A_2^{\mathrm{loc}}(\Omega)$ if for any compact $K \subset \Omega$
  \begin{equation*}
    \sup_{B \subset K} \Big(\fint_B \mu(x) \mathrm{d}x \Big) \Big(\fint_{B} \frac{1}{\mu(x)} \mathrm{d}x \Big) < \infty ,
  \end{equation*}
  where the supremum is over balls $B$. %
  Similarly, $W$ is in  $A_2^{\mathrm{loc}}(\Omega)$ if for any compact $K \subset \Omega$
  \begin{equation*}
    \sup_{B \subset K} \left\lVert \Big(\fint_B W(x) \mathrm{d}x \Big)^{1/2} \Big(\fint_{B} W^{-1}(x) \mathrm{d}x \Big)^{1/2} \right\rVert_{\mathrm{op}}^2 < \infty
  \end{equation*}
  where $\lVert \, \cdot\,\rVert_{\mathrm{op}}$ is the operator norm on the space of linear operators acting on $\mathbb{C}^d$.
\end{define}
As in \cref{def:local_A_2} we define $A_2^{\mathrm{loc}}(M)$ on a manifold $M$ for scalar weights.
One can show that for scalar weights it holds that $ A_2^R \subset A_2^{\mathrm{loc}}$ for any $R > 0$.

Defining matrix weights on a Riemannian manifold $M$ is more subtle.
At any $y \in M$, $W(y)$ should be a positive definite map of $T_y M$,
and in a chart $\varphi \colon \mathbb{R}^d \to M$,
it should be represented by $W_{\varphi} \coloneqq (\mathrm{d}\varphi)^{-1} W (\mathrm{d}\varphi^\star)^{-1}$.
However, the following example indicates that
the matrix $A_2$ condition on $W_{\varphi}$
is not in general invariant under transition maps between different smooth charts $\varphi$.
\begin{example}
  Let $W \colon \mathbb{R} \to \mathbb{R}^{2\times 2}$ be the matrix weight
  \begin{equation*}
    W(x) = \begin{bmatrix}
      \cos(x) & \sin(x) \\
      - \sin(x) & \cos(x)
    \end{bmatrix}
    \begin{bmatrix}
      1 & 0 \\
      0 & 1 + 2r
    \end{bmatrix}
    \begin{bmatrix}
      \cos(x) & -\sin(x) \\
      \sin(x) & \cos(x)
    \end{bmatrix}. 
  \end{equation*}
  The constant diagonal matrix $W(0) = 
  \begin{bsmallmatrix}      
    1 & 0 \\
    0 & 1 + 2r 
  \end{bsmallmatrix}$
  is trivially a matrix $A_2$ weight with $[W(0)]_{A_2} = 1$ for any $r \geq 0$.
  A direct computation shows that
  \begin{equation*}
        \lim_{r \to + \infty} \left\lVert \Big(\fint_0^{\pi} W(x) \mathrm{d}x \Big)^{1/2} \Big(\fint_0^{\pi} W^{-1}(x) \mathrm{d}x \Big)^{1/2} \right\rVert_{\mathrm{op}}^2 = \infty.
  \end{equation*}
  See also \cite[Proposition 5.3]{zbMATH01679403} %
  and \cite[Example 4.3]{zbMATH06728802}. %
\end{example}
Therefore, we make the following auxiliary definition:
\begin{define}
  A matrix weight $W \in \mathsf{End}(T M)$ belongs to $A_2^{\mathrm{loc}}(M)$ if at each $y \in M$
  there exists a chart $\varphi$ such that $(\mathrm{d}\varphi)^{-1} W (\mathrm{d}\varphi^\star)^{-1}$ %
  is a weight in $A_2^{\mathrm{loc}}(\mathbb{R}^d)$.
\end{define}

\section{Two scalar weights in one dimension}
\label{sec:1dim}

Following the historical tradition of the Kato square root problem,
we first consider the one-dimensional problem.
We treat this case separately since                  %
all one-dimensional manifolds are locally isometric, %
so no hypothesis on the Riemannian metric $g$ is needed,
only hypothesis on the weight $\nu$.

In dimension $d = 1$ the matrix weight $W(x)$
reduces to a scalar weight $w(x)$,
and $\nabla = \mathrm{div} = \partial_x$ is the derivative.
Consider the differential operator
\begin{equation}\label{eq:D_in_1D}
  D =
  \begin{bmatrix}
    0 & - (1/\mu) \partial_x w \\
    \partial_x & 0
  \end{bmatrix}.
\end{equation}
Let $\rho \colon \mathbb{R} \to \mathbb{R}$ be a ``rubber band'' parametrisation, a map stretching the real line, with $y = \rho(x)$ for $x \in \mathbb{R}$.
To see $g,\nu$ appear, we consider the pullback
\begin{equation}\Tiso \colon 
  \begin{bmatrix}
    v_1(y) \\
    v_2(y) 
  \end{bmatrix} \mapsto
  \begin{bmatrix}
    v_1(\rho(x)) \\
    v_2(\rho(x)) \rho'(x)
  \end{bmatrix}
  =
  \begin{bmatrix}
    u_1(x) \\
    u_2(x)
  \end{bmatrix}
  .\label{eq:rubber_band_pullback}
\end{equation}

The basic observation is the following.
\begin{lemma}\label{lemma:rubber_band}
  Let $\mu,w$ be two weights that are smooth on an interval $I \subset \mathbb{R}$.
  Let $\rho \colon I \to \mathbb{R}$ be such that $\rho'(x) = \sqrt{\mu(x)/w(x)}$.
  Set $M \coloneqq \rho(I) \subset \mathbb{R}$. Let $ \nu(\rho(x)) \coloneqq \sqrt{\mu(x) w(x)} $
  and
  \begin{equation}\label{eq:DM_in_1D}
    D_M \coloneqq
    \begin{bmatrix}
      0 & - (1/\nu)\partial_y \nu \\
      \partial_y & 0
    \end{bmatrix}.
  \end{equation}
  Then the map $\Tiso$ defined in \eqref{eq:rubber_band_pullback} is an isometry  between the Hilbert spaces
  $\mathcalboondox{H} = L^2(I,\mu) \oplus L^2(I,w)$ and $\mathcalboondox{H}_M \coloneqq L^2(M,\nu) \oplus L^2(M,\nu)$,
  and $\Tiso^{-1} D \Tiso = D_M$.

\end{lemma}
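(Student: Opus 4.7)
The plan is to verify the two claims separately by direct calculation, leveraging the two identities $w(x)\rho'(x) = \sqrt{\mu(x) w(x)} = \nu(\rho(x))$ and $\rho'(x)/\mu(x) = 1/\nu(\rho(x))$, both of which follow immediately from the choices $\rho' = \sqrt{\mu/w}$ and $\nu \circ \rho = \sqrt{\mu w}$ in the statement. Surjectivity of $\Tiso$ is easy, since $\rho$ is a smooth diffeomorphism from $I$ onto $M$ (as $\rho' > 0$) and an inverse is given explicitly by $(u_1, u_2) \mapsto (u_1 \circ \rho^{-1}, (u_2 / \rho') \circ \rho^{-1})$, so it suffices to verify the isometric property on smooth test pairs and the intertwining on a dense subspace.

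For the isometry I would apply the change of variables $y = \rho(x)$, $dy = \rho'(x)\, dx$, in each of the two summands $\int_I |v_1 \circ \rho|^2 \mu \, dx$ and $\int_I |v_2 \circ \rho|^2 (\rho')^2 w \, dx$ making up $\lVert \Tiso v \rVert_{\mathcalboondox{H}}^2$. In the first the pushed-forward density becomes $\mu/\rho' = \sqrt{\mu w} = \nu$, while in the second it becomes $(\rho')^2 w / \rho' = \rho' w = \sqrt{\mu w} = \nu$, so both integrals collapse to $\int_M |v_j|^2 \nu\, dy$, as required.

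For the intertwining $\Tiso^{-1} D \Tiso = D_M$, I would verify $D(\Tiso v) = \Tiso(D_M v)$ componentwise on smooth $v = (v_1,v_2)$ and extend by density. The lower component is immediate from the chain rule: $\partial_x(v_1 \circ \rho) = (\partial_y v_1 \circ \rho) \cdot \rho'$, which matches $(\Tiso D_M v)_2$. For the upper component the crucial step is to rewrite $w(x)(\Tiso v)_2(x) = w(x)\rho'(x)\, v_2(\rho(x)) = (\nu v_2)(\rho(x))$, so that $w u_2$ is itself a pullback under $\rho$; then $\partial_x(w u_2)$ produces a chain-rule factor of $\rho'$, and dividing by $\mu$ uses $\rho'/\mu = 1/(\nu \circ \rho)$ to deliver exactly $-(1/\nu)\partial_y(\nu v_2)$ evaluated at $y = \rho(x)$, which is $(\Tiso D_M v)_1$. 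The main (mild) obstacle is precisely this upper component, where two weights and a derivative interact; the particular rescalings $\rho' = \sqrt{\mu/w}$ and $\nu = \sqrt{\mu w}$ are the unique choices (up to normalisation) that simultaneously enforce the isometry and collapse the two weights $(\mu, w)$ on $I$ into the single weight $\nu$ on $M$, and it is this compatibility that powers the whole argument.
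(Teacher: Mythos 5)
Your proposal is correct and follows essentially the same route as the paper's proof: the intertwining is checked componentwise using the chain rule for the lower entry and the identities $w\rho' = \nu\circ\rho$, $\mu = (\nu\circ\rho)\rho'$ for the upper entry, and the isometry is the same change of variables $y=\rho(x)$ in each weighted norm.
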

\begin{proof}
  We verify that $\Tiso D_M = D \Tiso$.
  This amounts to checking the equality $(!)$ in
  \begin{equation*}
    \Tiso D_M
    \begin{bmatrix}
      v_1 \\ v_2
    \end{bmatrix}
    =
    \begin{bmatrix}
      \Big( (-1/\nu) \partial_y \nu v_2 \Big) \circ \rho \\
      \rho' (\partial_y v_1) \circ \rho 
    \end{bmatrix}
    \overset{(!)}{=}
    \begin{bmatrix}
      - (1/\mu) \partial_x w (v_2 \circ \rho) \, \rho' \\
      \partial_x (v_1 \circ \rho) 
    \end{bmatrix}
    = D \Tiso 
    \begin{bmatrix}
      v_1 \\ v_2
    \end{bmatrix}.
  \end{equation*}
  The identity for the second component is the chain rule
  in \cref{prop:chain_rule_scalar} in one dimension.
  The identity for the first component is seen by multiplying and dividing by $\rho'$; 
  \begin{equation*}
    \frac{1}{\nu(\rho(x))\rho'(x)} \cdot \rho'(x) \partial_y( \nu \, v_2)(\rho(x)) \overset{(!)}{=} \frac{1}{\mu(x)} \partial_x( w(x) \rho'(x) (v_2 \circ \rho)(x) ) ,
  \end{equation*}
  and noting that %
  \begin{equation}\label{eq:1d_identities}
    \begin{cases}
      \mu(x) = \nu(\rho(x)) \rho'(x) , \\
      w(x) \rho'(x) = \nu(\rho(x)) .
    \end{cases}
  \end{equation}
  Using the identities in \eqref{eq:1d_identities} and the definition of $\Tiso$,
  the weighted norms $\lVert u_1 \rVert_{L^2(\mu)}$ and $\lVert u_2 \rVert_{L^2(w)}$ become
  \begin{align*}
    \int \lvert u_1(x) \rvert^2 \mu(x) \D{x} & = \int \lvert v_1(\rho(x)) \rvert^2 \nu(\rho(x)) \rho'(x) \D{x} = \int \lvert v_1(y) \rvert^2 \nu(y) \D{y} , \\
    \int \lvert u_2(x) \rvert^2 w(x) \D{x} & = \int \lvert v_2(\rho(x)) \rho'(x) \rvert^2 w(x) \D{x} = \int \lvert v_2(y) \rvert^2 \nu(y) \D{y} .
  \end{align*}
  This shows that $\Tiso$ is an isometry and concludes the proof.
\end{proof}

\cref{lemma:rubber_band} shows that formally,
$D$ in $L^2(I,\mu) \oplus L^2(I,w)$ is similar to $D_M$ defined in \eqref{eq:DM_in_1D} %
acting on $L^2(M,\nu) \oplus L^2(M,\nu)$,
to which \cite{ARR} applies.
To this end,
for non-smooth $\mu$ and $w$,
we need that $\nu \in A_2(\mathbb{R},\mathrm{d}y)$ and
the map $\rho$ to be absolutely continuous (in order to apply change of variables and chain rule as in \cref{appx:chainrule}), which amounts to $\rho' = \sqrt{\mu/w} \in L^1_{\mathrm{loc}}$.
This holds in particular if $\mu$,$w \in A_2^{\mathrm{loc}}$ which we need in order to apply \cref{prop:chain_rule_scalar}.
Note that, since $\mu$,$w^{-1}$ are in $L^1_{\mathrm{loc}}$, by Cauchy--Schwarz $\rho' \in L^1_{\mathrm{loc}}$ too.
Somewhat more subtle,
to ensure that we obtain a complete manifold $M$,
we must also take into account the completeness of the image of $\rho$ which, in the one-dimensional case, is the $y$-axis. See also \cref{ex:power_weights}.
This corresponds to the problem of defining $D$ as self-adjoint operator in $L^2(\mu) \oplus L^2(w)$.
Indeed, if $\rho$ maps onto an interval $M \subsetneq \mathbb{R} $,
boundary conditions
need to be imposed for $D_M$ to be self-adjoint in $\mathcalboondox{H}_M$,
and hence for $D = \Tiso D_M \Tiso^{-1}$ to be self-adjoint.
Although this can be done,
here we limit our study to the case in which $M$ is a complete manifold. %
See also \cref{ex:power_weights} below.

\begin{theorem}\label{thm:main_theorem1D}
  Consider a possibly unbounded interval $I = (c_1,c_2) \subseteq \mathbb{R}$.
  Let $\mu,w$ be  weights in $A_2^{\mathrm{loc}}(I)$ and assume that
  \begin{equation*}
    \int_{c_1}^{c} \sqrt{\frac{\mu}{w}} \D{t} = \int_{c}^{c_2} \sqrt{\frac{\mu}{w}} \D{t} = \infty \quad \text{ for } c_1 < c < c_2. 
  \end{equation*}
  For some fixed $c \in (c_1, c_2)$, let
  \begin{equation*}
    \rho(x) = \int_{c}^{x} \sqrt{\frac{\mu}{w}} \D{t} \quad \text{ and } \quad \nu(y) \coloneqq \sqrt{\mu(\rho^{-1}(y)) w(\rho^{-1}(y))}.
  \end{equation*}
  Assume that $\nu \in A_2(\mathbb{R}, \mathrm{d}y)$.
  Let $D$ be the operator defined in \eqref{eq:D_in_1D}
  and let $B$ be a $(\mu \oplus w)$-bounded
  and $(\mu \oplus w)$-accretive multiplication operator on $L^2(I,\mu) \oplus L^2(I,w)$ as in \cref{def:W-boundedness_and_accretivity}.
  Then $B D$ and $D B$ are bisectorial operators
  satisfying quadratic estimates and have bounded $H^\infty$ functional calculus in $L^2(I,\mu) \oplus L^2(I,w)$.
\end{theorem}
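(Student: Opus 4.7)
The plan is to reduce the theorem to the one-scalar-weight result of \cite{ARR} via the unitary similarity of \cref{lemma:rubber_band}, extended to the present non-smooth setting. First I would verify that the formula \eqref{eq:rubber_band_pullback} defines a unitary map from $\mathcalboondox{H}_M = L^2(\mathbb{R},\nu) \oplus L^2(\mathbb{R},\nu)$ onto $\mathcalboondox{H} = L^2(I,\mu) \oplus L^2(I,w)$. Because $\mu,w \in A_2^{\mathrm{loc}}(I)$, Cauchy--Schwarz gives $\rho' = \sqrt{\mu/w} \in L^1_{\mathrm{loc}}(I)$, so $\rho$ is an absolutely continuous bijection of $I$ onto $\mathbb{R}$, the image being all of $\mathbb{R}$ by the divergence hypothesis on $\int \sqrt{\mu/w}$. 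The norm computations at the end of the proof of \cref{lemma:rubber_band} then carry over by the change-of-variables formula for absolutely continuous monotone maps, with no smoothness required on the weights.

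Next, I would promote the formal identity $\Tiso D_M = D \Tiso$ to a genuine operator similarity $\Tiso^{-1} D \Tiso = D_M$, taking $D$ and $D_M$ to be the respective closed maximal operators generated by their distributional actions. The algebraic computation in the proof of \cref{lemma:rubber_band} extends to non-smooth $\mu,w$ by \cref{prop:chain_rule_scalar}: for $v_1$ in the domain of the distributional derivative in $L^2(\nu)$, the composition $v_1 \circ \rho$ is weakly differentiable with $\partial_x(v_1 \circ \rho) = \rho'(\partial_y v_1) \circ \rho$, and the second component is handled analogously using the identities \eqref{eq:1d_identities}. This identifies $\mathsf{dom}(D) = \Tiso(\mathsf{dom}(D_M))$ and yields $D \Tiso = \Tiso D_M$ on a core. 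Since $\mathbb{R}$ is complete and $\nu \in A_2$, the operator $D_M$ is self-adjoint on $\mathcalboondox{H}_M$, hence so is $D$ on $\mathcalboondox{H}$; both are therefore bisectorial of angle zero. The divergence hypothesis on $\int \sqrt{\mu/w}$ is essential precisely here: it ensures $\rho(I) = \mathbb{R}$, so that no boundary conditions need be imposed on $D_M$.

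With the similarity in hand, define $B_M \coloneqq \Tiso^{-1} B \Tiso$. At each $x \in I$, $\Tiso$ acts by the diagonal linear map $\mathrm{diag}(1,\rho'(x))$, which is, up to the Jacobian absorbed in the change of measure, an isometry between the pointwise inner products $\mathrm{diag}(\nu,\nu)$ and $\mathrm{diag}(\mu,w)$ on $\mathbb{C}^2$; consequently $B_M$ is a pointwise multiplication operator that inherits $(\nu \oplus \nu)$-boundedness and $(\nu \oplus \nu)$-accretivity from $B$. Since $\nu \in A_2(\mathbb{R},\mathrm{d}y)$, the one-dimensional instance of the main theorem of \cite{ARR} applies to $B_M D_M$ on $\mathcalboondox{H}_M$, yielding bisectoriality, quadratic estimates, and a bounded $H^\infty$ functional calculus. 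Conjugation by the unitary $\Tiso$ transports all three properties to $BD$ on $\mathcalboondox{H}$, and the statement for $DB$ follows by the same reasoning applied to $D_M B_M = \Tiso^{-1} D B \Tiso$, or by passing to adjoints.

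The main obstacle is the rigorous handling of domains and self-adjointness in the non-smooth setting, that is, the identification of $\mathsf{dom}(D)$ with the pullback under $\Tiso$ of the natural weighted Sobolev-type domain of $D_M$. Once this is secured, everything is automatic by similarity, and the one-dimensional weighted commutator and $Tb$ machinery of \cite{ARR} does all the analytic work on the $\mathcalboondox{H}_M$ side.
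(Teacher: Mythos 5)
Your proposal follows essentially the same route as the paper: extend the rubber-band isometry $\Tiso$ of \cref{lemma:rubber_band} to non-smooth weights, identify $\mathsf{dom}(D)$ with $\Tiso(\mathsf{dom}(D_M))$ via the non-smooth chain rules (\cref{prop:chain_rule_scalar} for the gradient component and \cref{prop:chain_rule_vector} for the adjoint component, which the paper treats by noting $(1/\mu)\partial_x w$ is unitarily equivalent to $\partial_x\colon L^2(w^{-1})\to L^2(\mu^{-1})$), transfer the $(\mu\oplus w)$-boundedness and accretivity of $B$ to $(\nu\oplus\nu)$ for $B_M=\Tiso^{-1}B\Tiso$ using $\Tiso^\star=\Tiso^{-1}$, and then invoke \cite{ARR} on the one-weight side before conjugating back (the paper applies it to $D_MB_M$ and gets $BD$ via $BD=B(DB)B^{-1}$, a purely cosmetic difference from your ordering). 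The argument is correct; no genuine gap.
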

\begin{proof}
  The operator $D_M$ in \eqref{eq:DM_in_1D} has domain $\mathcal{H}^1_\nu \oplus (\mathcal{H}^1_\nu)^\star$ where
  \begin{equation*}
    \mathcal{H}^1_\nu \coloneqq \big\{ v \in L^2(\nu) \, : \, \partial_y v \in L^2(\nu) \big\}
  \end{equation*}
  and the adjoint space $(\mathcal{H}^1_\nu)^\star = \{ v \in L^2(\nu) \, : \, (1/\nu)\partial_y\nu v \in L^2(\nu) \}$.
  This space is isometric to the domain of $\partial_y$ in $L^2(1/\nu)$.
  See \cite[Lemma 2.3]{AMR} %
  which shows that the operators $\nabla$ and $\mathrm{div}$
  -- and in particular $\partial_y$ in one dimension -- have dense domains and are closed operators.
  The operator $D$ has domain $\mathcal{H}^1_{\mu,w} \oplus (\mathcal{H}^1_{\mu,w})^\star$ where
  \begin{equation*}
    \mathcal{H}^1_{\mu,w} \coloneqq \{ u \in L^2(\mu) \, : \, \partial_x u \in L^2(w) \}
  \end{equation*}
  and the adjoint space $(\mathcal{H}^1_{\mu,w})^\star = \{ u \in L^2(w) \, : \, (1/\mu)\partial_x w u \in L^2(\mu) \}$.
  Note that the operator $(1/\mu)\partial_x w \colon L^2(w) \to L^2(\mu)$ is unitary equivalent to
  $\partial_x \colon L^2(w^{-1}) \to L^2(\mu^{-1})$,
  since the multiplication by $w$ is a unitary map from $L^2(w) \to L^2(w^{-1})$.

  The pullback transformation $\Tiso$ maps between the domains of $D_M$ and $D$.
  Indeed, if $v \in \mathcal{H}^1_\nu$, then
  by \cref{prop:chain_rule_scalar} applied with $v = \nu$ and $V = \nu$,
  we have that $u \coloneqq \rho^* v \in L^2(\mu)$ and
  \begin{equation*}
    \partial_x u = \partial_x ( \rho^* v ) = \rho^* (\partial_y v) = \rho' \, (\partial_y v)\circ \rho \in L^2(w)
  \end{equation*}
  since $v_\rho = \mu$ and $V_\rho = w$.
  Similarly,
  we see that the $L^2$-adjoint of $\rho^*$, $\rho_*/\rho'$, maps
  \begin{equation*}
    \{ u \in L^2(w^{-1}) \, :\, \partial_x u \in L^2(\mu^{-1}) \} \to \mathcal{H}^1_{\nu^{-1}}.
  \end{equation*}
  By applying \cref{prop:chain_rule_vector} with $v = w^{-1}$ and $V = \mu^{-1}$,
  we see that both $v^{\rho}$ and $V^{\rho}$ equals $1/\nu$, so
  we have that $(\rho')^{-1} \rho_* u \in L^2(\nu^{-1})$ and
  \begin{equation*}
    \partial_y \big(\frac{\rho_*}{\rho'} u\big) = \frac{\rho_*}{\rho'} ( \partial_x u ) \in L^2(\nu^{-1}).
  \end{equation*}

  Let $B_M \coloneqq \Tiso^{-1} B \Tiso$.
  We show that $B$ is $(\mu \oplus w)$-bounded
  and $(\mu \oplus w)$-accretive if and only if the operator
  $B_M$ is $(\nu \oplus \nu)$-bounded
  and $(\nu \oplus \nu)$-accretive. %
  The $(\nu \oplus \nu)$-boundedness of $B_M$ means that
   \begin{equation}\label{eq:muw_bddness}
     \int \left(
       \begin{bsmallmatrix}
         \nu & 0 \\
         0 & \nu
       \end{bsmallmatrix}
       \Tiso^{-1} B \Tiso v , \Tiso^{-1} B \Tiso v
     \right) \D{y} \lesssim
     \int \big\lvert \begin{bsmallmatrix}
         \nu & 0 \\
         0 & \nu
       \end{bsmallmatrix}^{1/2} v
       \big\rvert^2 \D{y}.
   \end{equation}
   Let $u = \Tiso v$, then the left hand side of \eqref{eq:muw_bddness} equals
   \begin{equation*}
     \langle \Tiso^{-1} B u , \Tiso^{-1} B u \rangle_{L^2\big(
       \begin{bsmallmatrix}
         \nu & 0 \\
         0 & \nu
       \end{bsmallmatrix}\big)} =
     \langle \Tiso \Tiso^{-1} B u , B u \rangle_{L^2\big(
       \begin{bsmallmatrix}
         \mu & 0\\
         0  & w
       \end{bsmallmatrix}\big)} 
   \end{equation*}
   where we used that $\Tiso^{-1} = \Tiso^\star$,
   since $\Tiso$ is an isometry, as shown in \cref{lemma:rubber_band}.
   The same applies to show that
   $B_M$ is $(\nu \oplus \nu)$-accretive if and only if $B$ is $(\mu \oplus w)$-accretive.

   Now, to prove the theorem,
   apply  \cite[Theorem 3.3]{ARR} to $D_M B_M$, %
   where $D_M \coloneqq \Tiso^{-1} D \Tiso$.
   It follows that $D_M B_M$ satisfies quadratic estimates.
   The same holds for the operator $D B$ via the isometry $\Tiso$,
   and for $B D = B (D B) B^{-1}$.
 \end{proof}

\begin{remark}
  Since the Riemannian measure of $\rho(J)$ for any subinterval $J \subseteq I$ is $\int_J \rho'(x) \D{x}$,
  the condition $\nu \in A_2(\mathbb{R},\mathrm{d}y)$ explicitly means
  that for all intervals $J$, we have        %
  \begin{equation}\label{eq:nu_in_A2_means}
    \Big(\int_J \mu(x) \D{x}\Big)\Big( \int_J \frac{1}{w(x)} \D{x} \Big) \lesssim \Big( \int_J \sqrt{\frac{\mu}{w}} \D{x} \Big)^2.
  \end{equation}
  Note that the hypothesis
  $\mu$,$\mu^{-1}$,$w$,$w^{-1} \in L^1_{\mathrm{loc}}$
  and more precisely $\mu,w \in A_2^{\mathrm{loc}}$,
  is not used quantitatively,
  but only to ensure that:
  \begin{enumerate}
  \item $L^2(I,\mu)$ and $L^2(M,\nu)$ are contained in $L^1_{\mathrm{loc}}(\mathrm{d}x)$,
    so that the derivatives in the operator $D$ can be, and are, interpreted in the sense of distributions;
  \item the isometry $\Tiso$ maps $\mathsf{dom}(D_M)$ bijectively onto $\mathsf{dom}(D)$.
  \end{enumerate}
  A way to extend \cref{thm:main_theorem1D} to more rough weights
  would be to define the domain $\mathsf{dom}(D)$ as the image of $\mathsf{dom}(D_M)$ under the isometry $\Tiso$.
  In this way, one only requires that $\sqrt{\mu/w} \in L^1_{\mathrm{loc}}$ and \eqref{eq:nu_in_A2_means} uniformly for all $J \subseteq I$,
  but, in this generality, the derivatives in $D$ do not have the standard distributional definition.
\end{remark}

In one dimension we have the following implication. It is not clear to us if such relation between $(\mu,w)$ and $\nu$ exists in higher dimension. See \cref{thm:main_theorem1D} below.
\begin{prop}
  If $\mu,w \in A_2(I,\mathrm{d}x)$, then $\nu \in A_2(\mathbb{R},\mathrm{d}y)$, where $\mathrm{d}y = \rho'(x)\mathrm{d}x$. 
\end{prop}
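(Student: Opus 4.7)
My plan is to verify the $A_2$ condition for $\nu$ in the explicit form already recorded as \eqref{eq:nu_in_A2_means}: that is, to show
\begin{equation*}
  \int_J \mu \D{x} \cdot \int_J w^{-1} \D{x} \lesssim \Big(\int_J \sqrt{\mu/w} \D{x}\Big)^2
\end{equation*}
for every subinterval $J \subseteq I$, since $\rho$ is a bijection between subintervals of $I$ and subintervals of $\mathbb{R}$. I expect to need only Cauchy--Schwarz together with the one-dimensional $A_2$ bounds on $\mu$ and on $w$ separately.

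The immediate obstacle is that the naive Cauchy--Schwarz estimate $\big(\int_J \sqrt{\mu/w}\big)^2 \le \int_J \mu \cdot \int_J w^{-1}$ goes in the wrong direction. The trick I would use is to pair $\int_J \sqrt{\mu/w}$ with its dual quantity $\int_J \sqrt{w/\mu}$ through two applications of Cauchy--Schwarz: first
\begin{equation*}
  |J|^2 = \Big(\int_J (\mu/w)^{1/4} (w/\mu)^{1/4} \D{x}\Big)^2 \le \int_J \sqrt{\mu/w} \D{x} \cdot \int_J \sqrt{w/\mu} \D{x},
\end{equation*}
and second $\int_J \sqrt{w/\mu} \D{x} \le \big(\int_J w \D{x}\big)^{1/2} \big(\int_J \mu^{-1} \D{x}\big)^{1/2}$. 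Squaring and combining these yields
\begin{equation*}
  |J|^4 \le \Big(\int_J \sqrt{\mu/w} \D{x}\Big)^2 \cdot \int_J w \D{x} \cdot \int_J \mu^{-1} \D{x}.
\end{equation*}

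To close the argument I invoke the two $A_2$ hypotheses in the form $\int_J \mu \D{x} \le [\mu]_{A_2} |J|^2 / \int_J \mu^{-1} \D{x}$ and $\int_J w^{-1} \D{x} \le [w]_{A_2} |J|^2 / \int_J w \D{x}$. Multiplying these and substituting the previous bound on $|J|^4$ gives
\begin{equation*}
  \int_J \mu \D{x} \cdot \int_J w^{-1} \D{x} \le [\mu]_{A_2}[w]_{A_2} \frac{|J|^4}{\int_J \mu^{-1} \D{x} \cdot \int_J w \D{x}} \le [\mu]_{A_2}[w]_{A_2}\Big(\int_J \sqrt{\mu/w} \D{x}\Big)^2,
\end{equation*}
which is the desired $A_2$ condition for $\nu$ on the $y$-axis, with the quantitative bound $[\nu]_{A_2} \le [\mu]_{A_2}[w]_{A_2}$. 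I do not anticipate serious difficulties beyond noticing that Cauchy--Schwarz applied to the pair $(\sqrt{w},\, 1/\sqrt{\mu})$ is exactly what converts the two scalar $A_2$ hypotheses into the ``two-weight'' form needed for $\nu$.
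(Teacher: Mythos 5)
Your proof is correct and is essentially the paper's own argument: reduce to the condition \eqref{eq:nu_in_A2_means}, apply Cauchy--Schwarz twice (once to get $\lvert J\rvert^2 \le \int_J\sqrt{\mu/w}\cdot\int_J\sqrt{w/\mu}$ and once to bound $\int_J\sqrt{w/\mu}$ by $(\int_J w)^{1/2}(\int_J \mu^{-1})^{1/2}$), and combine with the two scalar $A_2$ hypotheses. The only difference is the order of the steps and that you track the constant $[\nu]_{A_2}\le[\mu]_{A_2}[w]_{A_2}$ explicitly.
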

\begin{proof}
  The weight $\nu$ is in $A_2(\mathbb{R},\mathrm{d}y)$ if \eqref{eq:nu_in_A2_means} holds for all $J \subset \mathbb{R}$.
  The $A_2$ condition on an interval $J$ for $\mu$ and $w$ means
  \begin{equation*}
    \int_{J} \mu(x) \D{x} \lesssim \frac{\lvert J \rvert^2}{\int_{J} 1/\mu(x) \D{x}} \quad , \quad     \int_{J} \frac{1}{w(x)} \D{x} \lesssim \frac{\lvert J \rvert^2}{\int_{J} w(x) \D{x}}.
  \end{equation*}
  Applying Cauchy--Schwarz twice gives as claimed
  \begin{align*}
    \Big(\int_J \mu \D{x}\Big)\Big( \int_J \frac{1}{w} \D{x} \Big) & \lesssim \frac{\lvert J \rvert^4}{\big(\int_J 1/\mu\big)\big( \int_J w \big)}
    \le \left( \frac{\lvert J \rvert^2}{\int_J \sqrt{w/\mu} \D{x}}\right)^2 \le \Big( \int_{J} \sqrt{\frac{\mu}{w}} \D{x} \Big)^2.
  \end{align*}
\end{proof}

\begin{example}\label{ex:power_weights}
  Consider the power weights
  $\mu(x) = x^{\alpha}$ and $w(x) = x^{-\beta}$ for $x > 0$.
  Then $\rho'(x) = \sqrt{x}^{\alpha + \beta}$ and $\nu(\rho(x)) = \sqrt{x}^{\alpha - \beta}$. %
  In computing $\rho^{-1}$,
  we distinguish three cases.
  \begin{description}
  \item[Case 1] $\alpha + \beta + 2 > 0$.
    In this case $\rho(x) = \frac{2}{\alpha + \beta +2} \sqrt{x}^{\alpha + \beta + 2}$ is strictly positive and increasing.
    Thus $\nu(y) = (\frac{\alpha + \beta + 2}{2} y)^{\frac{\alpha - \beta}{\alpha + \beta + 2}}$.
    The weight $\nu \in A_2(\mathrm{d}y)$ if and only if $-1 < \frac{\alpha - \beta}{\alpha + \beta + 2} < 1$,
    or equivalently if $\alpha > -1$ and $\beta > -1$.
    
  \item[Case 2] $\alpha + \beta + 2 < 0$.
    In this case, $\rho$ is negative
    and equals $\frac{1}{c} x^{c}$ where $c = \frac{\alpha + \beta + 2}{2} < 0$
    and  $\nu(y) = (- c y)^{\frac{\alpha - \beta}{\alpha + \beta + 2}} > 0$.
    The weight $\nu \in A_2(\mathrm{d}y)$ if and only if $-1 < \frac{\alpha - \beta}{\alpha + \beta + 2} < 1$,
    or equivalently if $\alpha < -1$ and $\beta < -1$.
    
  \item[Case 3] $\alpha + \beta = - 2$.
    In this case $\rho'(x) = 1/x$ and so $\rho(x) = \ln x$.
    Then $\rho^{-1}(y) = e^y$ and $\nu(y) = (e^y)^{(\alpha - \beta)/2}$ is in $A_2(\mathrm{d}y)$ if and only if $\alpha = \beta = -1$.
  \end{description}
  In either case $\nu \in A_2$ if and only if $\mathrm{sgn}(\alpha + 1) = \mathrm{sgn}(\beta + 1)$.
  \begin{figure}[h!]
    \begin{subfigure}{.33\textwidth}
      \begin{tikzpicture}[scale=.65]
        \draw[->] (-3, 0) -- (3, 0) node[below] {$x$};
        \draw[->] (0, -2) -- (0, 2) node[left] {$y$};
        \draw[samples=300,scale=1, domain=0:3, smooth, variable=\x, black] plot ({\x}, {sqrt(\x)});
        \draw[samples=300,scale=1, domain=-3:0, smooth, variable=\x, gray] plot ({\x}, {- sqrt(- \x)});
      \end{tikzpicture}
      \caption{Case 1.} %
    \end{subfigure}\quad
    \begin{subfigure}{.3\textwidth}      
      \begin{tikzpicture}[scale=.9]
        \draw[->] (0, 0) -- (3, 0) node[above] {$x$};
        \draw[->] (0, -2) -- (0, .5) node[left] {$y$};
        \draw[samples=100,scale=.5, domain=0.25:6, smooth, variable=\x, black] plot ({\x}, {-1/(\x)});
      \end{tikzpicture}
      \caption{Case 2.} %
    \end{subfigure}
    \begin{subfigure}{.3\textwidth}
      \begin{tikzpicture}[scale=.9]
        \draw[->] (0, 0) -- (3, 0) node[below] {$x$};
        \draw[->] (0, -1.5) -- (0, 1) node[left] {$y$};
        \draw[samples=100,scale=.5, domain=0.05:6, smooth, variable=\x, black] plot ({\x}, {ln(\x)});
      \end{tikzpicture}
      \caption{Case 3.} %
    \end{subfigure}    
    \caption{Completeness of the $y$-axes.
      In Case 1, $\rho(x) = \sqrt{x}$ on $\mathbb{R}_+$ can be extended to an odd bijection $\mathbb{R} \to \mathbb{R}$.
      In Case 2, $\rho(x) = -1/x$ is not surjective onto $\mathbb{R}$.
      In Case 3, $\rho(x) = \ln(x)$ is a bijection from $\mathbb{R}_+$ to $\mathbb{R}$.}
    \label{fig:1dim_examples}
  \end{figure}
  Case 2 shows that it is possible that $\nu \in A_2$ even if $\mu$ and $w$ are not.
  Note that in the extension of Case 1 to an odd bijection, and in Case 3, the map $\rho$ is a bijection and maps onto a complete manifold,
  while in Case 2 the map $\rho$ is not surjective. See \cref{fig:1dim_examples}. %

  Assuming that $\lvert \alpha \rvert,\lvert \beta\rvert < 1$ and extending to power weights
  $\mu(x) = \lvert x \rvert^{\alpha}$ and $ w(x) = \lvert x \rvert^{-\beta}$,
  \cref{thm:main_theorem1D} applies and gives
  quadratic estimates for the operator $B D$, where
  \begin{equation*}
    D =
    \begin{bmatrix}
      0 & - \lvert x \rvert^{-\alpha} \partial_x\lvert x \rvert^{-\beta} \\
      \partial_x& 0
    \end{bmatrix},
  \end{equation*}
  on the weighted space $L^2(\mathbb{R}, \lvert x \rvert^{\alpha}) \oplus L^2(\mathbb{R}, \lvert x \rvert^{-\beta})$.
\end{example}

\begin{cor}\label{cor:Kato_weighted1d}
  Let $I \subseteq \mathbb{R}$ and let $\mu,w \in A_2^{\mathrm{loc}}(\mathbb{R})$
  satisfy the assumptions of \cref{thm:main_theorem1D}.
  In particular $\nu \in A_2(\mathbb{R},\mathrm{d}y)$.
  Let $a,b$ be two complex-valued functions on $I$ such that
  \begin{equation}
    \label{eq:hyp_cor_Kato_1d}
    \begin{aligned}
      \mu(x) & \lesssim \Real \, a(x) \, , \,  \lvert a(x) \rvert \lesssim \mu(x) \, , \\
      w(x) & \lesssim \Real \, b(x) \, , \,  \lvert b(x) \rvert \lesssim w(x) 
    \end{aligned}
\end{equation}
for a.e. $x \in I$.
  Then the following Kato square root estimate holds:
  \begin{equation*}
    \lVert \sqrt{- (1/a) \partial_x b \partial_x} u \rVert_{L^2(I,\mu)} \eqsim \lVert \partial_x u \rVert_{L^2(I,w)} .
  \end{equation*}  
\end{cor}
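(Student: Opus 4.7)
The plan is to realise $-(1/a)\partial_x b\,\partial_x$ as the $(1,1)$-block of $(BD)^2$ for a suitable multiplier $B$, and then invoke \cref{thm:main_theorem1D}. Set
\begin{equation*}
B \coloneqq \begin{bmatrix} \mu/a & 0 \\ 0 & b/w \end{bmatrix}.
\end{equation*}
First I would verify that $B$ is $(\mu \oplus w)$-bounded and $(\mu \oplus w)$-accretive in the sense of \cref{def:W-boundedness_and_accretivity}. Boundedness is immediate from $\lvert \mu/a \rvert \lesssim 1$ and $\lvert b/w \rvert \lesssim 1$, both given by \eqref{eq:hyp_cor_Kato_1d}. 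For accretivity, $\Real(\mu/a) = \mu\,\Real(a)/\lvert a \rvert^2 \gtrsim 1$ since $\mu \lesssim \Real\,a$ and $\lvert a \rvert \lesssim \mu$, and symmetrically $\Real(b/w) = \Real(b)/w \gtrsim 1$.

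With $B$ so chosen, \cref{thm:main_theorem1D} provides bounded $H^\infty$ functional calculus for $BD$ on $\mathcalboondox{H} = L^2(I,\mu) \oplus L^2(I,w)$. A direct computation yields
\begin{equation*}
BD = \begin{bmatrix} 0 & -(1/a)\partial_x w \\ (b/w)\partial_x & 0 \end{bmatrix}, \qquad (BD)^2 = \begin{bmatrix} -(1/a)\partial_x b\,\partial_x & 0 \\ 0 & \ast \end{bmatrix},
\end{equation*}
using that multiplication by $w$ followed by $b/w$ is multiplication by $b$. Since $(BD)^2$ is block-diagonal, so is every function of it, and in particular $\sqrt{(BD)^2}$ has $\sqrt{-(1/a)\partial_x b\,\partial_x}$ as its $(1,1)$-block acting in $L^2(I,\mu)$.

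Next I would exploit the identity $\sqrt{\zeta^2} = \mathrm{sgn}(\zeta)\,\zeta$ on the bisector. Because $\mathrm{sgn} \in H^\infty$ with $\mathrm{sgn}^2 = 1$ away from the origin, the bounded $H^\infty$ calculus for $BD$ makes both $\mathrm{sgn}(BD)$ and its inverse on $\overline{\mathsf{im}(BD)}$ bounded, so
\begin{equation*}
\big\lVert \sqrt{(BD)^2}\,U \big\rVert_{\mathcalboondox{H}} \eqsim \lVert BD\,U \rVert_{\mathcalboondox{H}}, \qquad U \in \mathsf{dom}(BD).
\end{equation*}
Applying this with $U = \begin{bmatrix} u \\ 0 \end{bmatrix}$, the left-hand side equals $\lVert \sqrt{-(1/a)\partial_x b\,\partial_x}\,u \rVert_{L^2(I,\mu)}$, while the right-hand side equals $\lVert (b/w)\partial_x u \rVert_{L^2(I,w)} = \big(\int_I \lvert b \rvert^2 \lvert \partial_x u \rvert^2 / w \,\mathrm{d}x\big)^{1/2}$, which is equivalent to $\lVert \partial_x u \rVert_{L^2(I,w)}$ because $\lvert b \rvert \eqsim w$ by \eqref{eq:hyp_cor_Kato_1d}.

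All the substantive work is absorbed into \cref{thm:main_theorem1D}; what remains is a routine translation between the square of a bisectorial first-order operator and a second-order divergence-form operator. The only mild subtlety is that the kernel component of $BD$ contributes zero to both sides of the equivalence, which is handled by the extension convention $f(BD)\restriction_{\ker(BD)} = f(0)I$ recalled in \cref{subsec:HinftyFC} together with $f(0) = 0$ for $f(\zeta) = \sqrt{\zeta^2}$.
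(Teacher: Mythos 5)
Your proposal is correct and follows essentially the same route as the paper: same choice of $B$, same reduction to \cref{thm:main_theorem1D} via $\sqrt{(BD)^2}=\mathrm{sgn}(BD)\,BD$. The only cosmetic differences are that you check $(\mu\oplus w)$-boundedness and accretivity of the diagonal $B$ entrywise (the paper notes the conjugation identity instead) and you identify the right-hand side via $\lvert b\rvert\eqsim w$ rather than via $\lVert BDU\rVert\eqsim\lVert DU\rVert$; both are equivalent.
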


\begin{proof}
  Consider the multiplication operator
  $B = \Big [
  \begin{smallmatrix}
    \mu/a & 0 \\
    0 & b/w
  \end{smallmatrix} \Big ]
  $.
  The hypothesis in \eqref{eq:hyp_cor_Kato_1d} implies that $B$ is bounded and accretive.
  Since
  \begin{equation*}
    B =
    \begin{bmatrix}
      \sqrt{\mu} & 0 \\
      0 & \sqrt{w}
    \end{bmatrix}
    B
    \begin{bmatrix}
      \sqrt{\mu} & 0 \\
      0 & \sqrt{w}
    \end{bmatrix}^{-1}    
  \end{equation*}
  holds for any diagonal matrix $B$,
  then $B$ is
  $\begin{bsmallmatrix}
    \mu & \\
        & w
  \end{bsmallmatrix}
  $-bounded and $
  \begin{bsmallmatrix}
    \mu & \\
        & w
  \end{bsmallmatrix}
  $-accretive.  %
  The desired estimate follows by applying
  \cref{thm:main_theorem1D} to $B$ and
  $D$ as defined in \eqref{eq:D_in_1D}.
  Indeed, the perturbed operator $B D$ equals
  \begin{equation*}
    B D =
    \begin{bmatrix}
      0 & - (1/a) \partial_x w \\
      \frac{b}{w} \partial_x & 0 
    \end{bmatrix}
  \end{equation*} and so
  \begin{equation*}
    \lVert \sqrt{- (1/a) \partial_x b \partial_x} u \rVert_{L^2(I,\mu)} =
    \big\lVert  \sqrt{(B D)^2}\big[
    \begin{smallmatrix}
      u \\ 0
    \end{smallmatrix}
    \big] \big\rVert_{\mathcalboondox{H}}.
  \end{equation*}
  The boundedness of the $H^\infty$ functional calculus for $B D$ on $\mathcalboondox{H} = L^2(I,\mu) \oplus L^2(I,w)$ implies
  that $\mathrm{sgn}(B D)$ is a bounded and invertible operator on $\mathcalboondox{H}$.
  Since $\sqrt{(B D)^2} = \mathrm{sgn}(B D)B D$, we have
  \begin{equation*}   
    \big\lVert  \sqrt{(B D)^2}\big[
    \begin{smallmatrix}
      u \\ 0
    \end{smallmatrix}
    \big] \big\rVert_{\mathcalboondox{H}} \eqsim \big\lVert B D \big[
    \begin{smallmatrix}
      u \\ 0
    \end{smallmatrix}
    \big] \big\rVert_{\mathcalboondox{H}} \eqsim \big\lVert D \big[
    \begin{smallmatrix}
      u \\ 0 
    \end{smallmatrix}
    \big] \big\rVert_{\mathcalboondox{H}} \eqsim \lVert \partial_x u \rVert_{L^2(I,w)}.
  \end{equation*}
\end{proof}

In one dimension, it is well known from \cite{CMcM82} %
that the Kato square root estimate for uniformly bounded and accretive coefficients is equivalent to the $L^2$ boundedness of the Cauchy singular integral on Lipschitz curves.
It is therefore natural to investigate what implications \cref{cor:Kato_weighted1d} has for the boundedness of the Cauchy singular integral.
Although the following two examples do not give any new result, we include them since the observations may be useful in future work. 

\begin{example}[Cauchy integral on rectifiable graphs]
  Consider a curve $\gamma \coloneqq (t, \varphi(t))$ as the graph of a function $\varphi \colon \mathbb{R} \to \mathbb{R}$.
  The curve $\gamma$ is Lipschitz if and only if $\varphi' \in L^\infty$.

  The Cauchy singular integral
  \begin{equation*}
    \mathscr{C}_\gamma(x) \coloneqq \frac{i}{\pi} \mathrm{ p.v.} \int_{-\infty}^\infty \frac{u(y)}{y + i \varphi(y) - (x + i\varphi(x))} (1 + i \varphi'(y)) \D{y}
  \end{equation*}               %
  and its boundedness on $L^2(\gamma)$ for Lipschitz curves is a classical and famous problem in analysis.
It was first showed by Calderón \cite{zbMATH03581044} %
that $\mathscr{C}_\gamma \colon L^2(\gamma) \to L^2(\gamma)$ for a curve $\gamma \subseteq \mathbb{C}$ with small Lipschitz constant $\lVert \varphi' \rVert_{L^\infty}$. 
This smallness assumption was removed by Coifman--M\textsuperscript{c}Intosh--Meyer in \cite{CMcM82}, %
where only $\lVert \varphi' \rVert_{L^\infty} < \infty$ was assumed.
and finally David \cite{zbMATH03853693} %
showed that $\mathscr{C}_\gamma$ is bounded on $L^2(\gamma)$
if and only if the curve $\gamma$ is Ahlfors--David-regular,
meaning that the 1-dimensional Hausdorff measure $\mathscr{H}^1$ restricted on the curve satisfies
\begin{equation*}
  \mathscr{H}^1(\gamma \cap B(x,r)) \eqsim r
\end{equation*}
for any ball $B(x,r)$ centred at $x \in \gamma$.
A crucial observation due to Alan M\textsuperscript{c}Intosh
which led to the seminal work \cite{CMcM82}
is that the Kato estimate %
\begin{equation*}
  \lVert \sqrt{- (1/a) \partial_x b \partial_x} u \rVert_{L^2(\mathbb{R})} \eqsim \lVert \partial_x u \rVert_{L^2(\mathbb{R})},
\end{equation*}
for $b = 1/a$, implies the $L^2$-estimate for $\mathscr{C}_\gamma$ on Lipschitz curves.
See also Kenig and Meyer \cite{zbMATH04044540}. %

One can ask if the weighted estimates in \cref{cor:Kato_weighted1d}
can be used to prove that $\mathscr{C}_\gamma$ is bounded on  Ahlfors--David-regular graphs
more general than Lipschitz graphs.
This is still unclear to us.
The natural strategy is as follows.
As in \cite{McIQ91},
the Cauchy singular integral
can be written as $\mathrm{sgn}((1/a(x)) i \partial_x )$, for multiplier $a(x) = 1 + i \varphi'(x)$,
see also \cite[Consequence 3.2]{AKM^c}.
Note that the arclength measure on $\gamma$ is $\mathrm{d}s \coloneqq \sqrt{1 + (\varphi')^2} \D{x} = \mu \D{x}$.
Boundedness of $\mathscr{C}_\gamma$ in $L^2(\gamma, \mathrm{d}s)$ thus amounts to
\begin{equation*}
  \lVert \mathrm{sgn}\big( (1/a) i \partial_x \big) u \rVert_{L^2(\mathbb{R},\mu)} \lesssim \lVert u \rVert_{L^2(\mathbb{R},\mu)} .
\end{equation*}
By functional calculus, this is equivalent to
\begin{equation*}
  \lVert \sqrt{- (1/a) \partial_x (1/a) \partial_x } u \rVert_{L^2(\mathbb{R},\mu)} \lesssim \lVert (1/a) \partial_x u \rVert_{L^2(\mathbb{R},\mu)} = \lVert \partial_x u \rVert_{L^2(1/\mu)} .
\end{equation*}
The latter estimate would follow from \cref{cor:Kato_weighted1d}
with $b = 1/a$, $w = 1/\mu$, if the hypotheses were satisfied,
since in this case $\sqrt{\mu/w} = \mu = \sqrt{1+(\varphi')^2}$ and $\nu(y) = \sqrt{\mu w} = 1$.
However \cref{cor:Kato_weighted1d} does not apply here,
since the accretivity condition $\Real \, a(x) = 1 \gtrsim  \mu(x)$ is not satisfied,
unless $\varphi'$ is bounded.
\end{example}

We end this section by noting that the matrix-weighted Kato square root estimate \eqref{eq:matrix_weighted_Kato}
which we consider in this paper, despite looking like a two-weight estimate,
should be seen as a one-weight estimate,
as the proof of \cref{thm:main_theorem1D} clearly shows.
In the following example we see that our results apply
only when the weights in the square root operator correctly match the weights in the norms.

\begin{example}[Two-weight Hilbert transform]
  Consider the two-weight estimate
  \begin{equation}\label{eq:two-weight-Hilbert}
    \lVert H u \rVert_{L^2(\mu)} \lesssim \lVert u \rVert_{L^2(w)}
  \end{equation}
  for the Hilbert transform
  \begin{equation*}
    H u (x) \coloneqq \frac{i}{\pi} \mathrm{p.v.} \int_{\mathbb{R}} \frac{u(y)}{y-x} \D{y} .
  \end{equation*} 
  The problem of characterising for which weights $\mu,w$
  the estimate \eqref{eq:two-weight-Hilbert} holds was solved in \cite{MR3285857}. %
  If we use functional calculus
  to write $H$ as $\sqrt{-\partial_x^2} (i\partial_x)^{-1}$,
  then \eqref{eq:two-weight-Hilbert} amounts to
  \begin{equation}\label{eq:two-weight-Hilbert_func_calc}
    \lVert \sqrt{-\partial_x^2} u \rVert_{L^2(\mu)} \lesssim \lVert \partial_x u \rVert_{L^2(w)} .
  \end{equation}
  Changing variables $y = \rho(x)$ and $u(x) = v(\rho(x))$ as in \cref{lemma:rubber_band},
  and using the chain rule: $\partial_x = \rho' \partial_y$,
  the two-weight estimate \eqref{eq:two-weight-Hilbert} becomes
  \begin{equation*}
    \int \big\lvert \sqrt{-(\rho' \partial_y)^2} v(\rho(x)) \big\rvert^2 \mu(x) \D{x} \lesssim \int \lvert (\rho'(x) \partial_y v)(\rho(x)) \rvert^2 w(x) \D{x} .
  \end{equation*}
  Choosing $\rho'(x) = \sqrt{\mu(x)/w(x)}$ gives $(\rho')^2 w = \mu$ in the right hand side. %
  Changing variables and using $\nu\circ\rho  = \sqrt{w \mu}$ yields  
  \begin{equation*}
    \mu(x) \D{x} = \sqrt{\mu(x) w(x)} \cdot \sqrt{\mu(x)/w(x)} \D{x}  = (\nu \circ \rho)(x) \cdot \rho'(x) \D{x} = \nu(y) \D{y}.
  \end{equation*}
  Thus estimate \eqref{eq:two-weight-Hilbert} holds if and only if the one-weight estimate
  \begin{equation}\label{eq:other_form_of_two-weight_Hilbert}
    \big\lVert \sqrt{-(\lambda \partial_y)^2} v \big\rVert_{L^2(\nu)} \lesssim \lVert \partial_y v \rVert_{L^2(\nu)} 
  \end{equation}
  holds with the weight $\lambda(y) \coloneqq \rho'(\rho^{-1}(y)) = \sqrt{\mu(\rho^{-1}(y))/w(\rho^{-1}(y))}$
  in the Kato square root operator.
  \cref{cor:Kato_weighted1d} does not apply directly to \eqref{eq:other_form_of_two-weight_Hilbert},
  nor to \eqref{eq:two-weight-Hilbert_func_calc},
  since it requires that the weights in the Kato square root operator correctly match the weights in the norms.

\end{example}

\section{The \texorpdfstring{$(\mu, W)$}{(μ,W)} manifold $M$}
\label{sec:higher_dim}
We now seek to generalise the results in \cref{sec:1dim}
to higher dimension $d \ge 2$, starting with \cref{lemma:rubber_band}.
To cover general matrix weights $W$, we need to allow for more general diffeomorphisms
$\rho \colon \Omega \subseteq \mathbb{R}^d \to M$,
where now $M$ is some auxiliary smooth $d$-dimensional Riemannian manifold,
and $\Omega$ an open set in $\mathbb{R}^d$.
The metric $g$ for $M$ will be determined by $\mu$ and $W$,
but not the differential structure on $M$.
In general, smooth weights $(\mu, W)$ will define a metric $g$ for a manifold with non-zero curvature.
For this reason, we need to allow for curved manifolds.
Also, we will soon work with homeomorphisms $\rho$ which are not smooth.
So, while as sets and topological spaces $M$ and $\Omega \subseteq \mathbb{R}^d$ can be identified,
their differential structures will differ.
A manifestation of this is that the metric $g$ on $M$ will be smooth with respect to the differential structure on $M$,
but not with respect to the one on $\mathbb{R}^d$.
The natural pullback generalising \eqref{eq:rubber_band_pullback}
for the differential operator $D$ in \eqref{eq:first_appearance_of_D} is now
\begin{equation}\Tiso \colon 
  \begin{bmatrix}
    v_1(y) \\
    v_2(y) 
  \end{bmatrix} \mapsto
  \begin{bmatrix}
    v_1(\rho(x)) \\
    (\mathrm{d}\rho_x)^\star v_2(\rho(x))
  \end{bmatrix}
  \eqqcolon
  \begin{bmatrix}
    u_1(x) \\
    u_2(x)
  \end{bmatrix}
  . \label{eq:definition_rubber_band_higher_dim}
\end{equation}
Here $v_1 \colon M \to \mathbb{C}$ is a scalar function on $M$
and $v_2$ is a section of the cotangent bundle $T^{\star}M$, which we identify with $TM$ using the metric $g$.
This is important because,
although we can view $v_2$ as a vector on $M$, it is a 1-form, so its pullback
is obtained by multiplying $v_2 \circ \rho$ by the transpose $(\mathrm{d}\rho)^\star$
of the Jacobian matrix $\mathrm{d}\rho$.
Below $J_\rho$ denotes the determinant of the Jacobian matrix $\det(\mathrm{d}\rho) \coloneqq \det( g )^{1/2}$,
where $g = (\mathrm{d}\rho)^\star \mathrm{d}\rho$ is the Riemannian metric on $M$ pulled back to $\mathbb{R}^d$. 

  Here and below, to ease notation, we shall identify maps defined on $\mathbb{R}^d$
  and on $M$ through $\rho$, writing for example $\nabla_M v_1$ for $(\nabla_M v_1) \circ \rho$.
  We use $v(y)$ for functions defined on $M$ and $u(x)$ for functions defined on $\mathbb{R}^d$.
  With a slight abuse of notation,
  we use the abbreviations $J_\rho(y)$,$\D{\rho}_y$ and $u(y)$
  for $J_\rho(\rho^{-1}(y))$, $\D{\rho}_{\rho^{-1}(y)}$, $u(\rho^{-1}(y))$.
  The differential operators $\nabla$ and $\mathrm{div}$ are always defined on $\mathbb{R}^d$.

To write the operator $D_M$ similar to $D$ %
we need the chain rule:
\begin{equation*}
  \nabla u_1 = (\mathrm{d}\rho)^\star \nabla_M v_1,
\end{equation*}
which holds in the weak sense by \cref{prop:chain_rule_scalar}.
We also require the $L^2$-adjoint result for vector fields $u_2 \colon \mathbb{R}^d \to \mathbb{C}^d$
in \cref{prop:chain_rule_vector}.
We compute
\begin{align}
  \Tiso^{-1} D \Tiso
  \begin{bmatrix}
    v_1 \\ v_2
  \end{bmatrix}
  & = \Tiso^{-1} D
    \begin{bmatrix}
      v_1 \circ \rho \\
      (\mathrm{d}\rho_x^\star v_2) \circ \rho
    \end{bmatrix}
    = \Tiso^{-1} 
    \begin{bmatrix}
      - (1/\mu) \mathrm{div} W [ (\mathrm{d}\rho_x^\star v_2) \circ \rho ] \\
      \nabla (v_1 \circ \rho)    
    \end{bmatrix} \nonumber \\
  & = \Tiso^{-1} 
    \begin{bmatrix}
      - (1/\mu) \mathrm{div}_{M} \Big\{ J_\rho^{-1} \D{\rho}_x \big( W (\mathrm{d}\rho_x)^\star v_2 \big) \Big\} J_\rho \\ %
      \nabla u_1 
    \end{bmatrix} \nonumber \\
  & = 
    \begin{bmatrix}
      - (1/\mu) J_\rho \mathrm{div}_{M} \Big\{ J_\rho^{-1} \D{\rho}_y \big( W (\mathrm{d}\rho_y)^\star v_2 \big) \Big\}  \\ %
      \nabla_M v_1 
    \end{bmatrix}. \label{eq:we_demand_condition_later}
\end{align}

We obtain the following generalisation of \cref{lemma:rubber_band}.
\begin{lemma}\label{lemma:rubber_band_higher_dim}
  Assume that $\mu$ is a scalar weight on $\mathbb{R}^d$ and that $W$ is a matrix weight on $\mathbb{R}^d$.
  Assume that $\mu$ and $W$ are smooth around $\rho(x_0) \in \mathbb{R}^d$.
  Set
  \begin{equation*}
    g \coloneqq \mu W^{-1} \quad \text{ and } \quad \nu \coloneqq \mu / \sqrt{\det g} . %
  \end{equation*}
  Let $M$ be a Riemannian manifold with chart $(U,\rho)$ around $x_0$
  and metric $g$ in this chart.
  Let
  \begin{equation}\label{eq:D_M}
    D_M \coloneqq
    \begin{bmatrix}
      0 & - (1/\nu) \mathrm{div}_M \nu \\
      \nabla_M & 0
    \end{bmatrix}.
  \end{equation}
  Then $\Tiso \colon L^2(U,\nu) \oplus L^2(TU,\nu I) \to L^2(\rho^{-1}(U), \mu) \oplus L^2(\rho^{-1}(U);\mathbb{C}^d, W)$
  defined in \eqref{eq:definition_rubber_band_higher_dim},
  is an isometry, and $\Tiso^{-1} D \Tiso = D_M$.
\end{lemma}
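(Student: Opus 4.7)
\emph{Proof plan.} The plan is to verify the two claims of the lemma---that $\Tiso$ is an isometry between the stated Hilbert spaces, and that $\Tiso^{-1}D\Tiso = D_M$---by reducing each to algebraic identities between the matrices $g$, $W$, $\mathrm{d}\rho$, together with the chain rule \cref{prop:chain_rule_scalar} and the Piola-type transformation in \cref{prop:chain_rule_vector}. The preceding display \eqref{eq:we_demand_condition_later} already carries out the analytic steps, so what remains is to match its output with $D_M\begin{bsmallmatrix} v_1 \\ v_2 \end{bsmallmatrix}$.

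For the isometry, I would begin with the change of variables $y = \rho(x)$, under which the Riemannian volume element pulls back to $J_\rho\,\mathrm{d}x$. The scalar identity is immediate from the defining relation $\nu J_\rho = \mu$. The vector part hinges on the algebraic identity
\[
\mathrm{d}\rho \cdot W \cdot (\mathrm{d}\rho)^\star = \mu I,
\]
which follows from $W = \mu g^{-1}$ combined with $g = (\mathrm{d}\rho)^\star \mathrm{d}\rho$: then $g^{-1} = (\mathrm{d}\rho)^{-1}(\mathrm{d}\rho)^{-\star}$ and so $\mathrm{d}\rho \cdot g^{-1} \cdot (\mathrm{d}\rho)^\star = I$. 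Combining this with $u_2 = (\mathrm{d}\rho)^\star (v_2 \circ \rho)$ gives $\langle W u_2, u_2\rangle = \mu\,|v_2\circ\rho|^2$, and after the same change of variables one recovers the vector norm on $M$ against $\nu\,\mathrm{d}V_{g_M}$.

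For the similarity $\Tiso^{-1}D\Tiso = D_M$, I would simply observe that the final expression in \eqref{eq:we_demand_condition_later} already exhibits $\nabla_M v_1$ in the second component. For the first component, it remains to verify the two identities
\[
J_\rho^{-1}\,\mathrm{d}\rho \cdot W \cdot (\mathrm{d}\rho)^\star = \nu I \qquad \text{and} \qquad J_\rho/\mu = 1/\nu,
\]
both of which follow directly from the isometry identity $\mathrm{d}\rho\, W\,(\mathrm{d}\rho)^\star = \mu I$ and the definition $\nu = \mu/J_\rho$. Substituting these back collapses \eqref{eq:we_demand_condition_later} to $-(1/\nu)\operatorname{div}_M(\nu v_2)$ in the first slot, which matches $D_M$ exactly.

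The main technical point is not the algebra but the justification of the divergence transformation employed in \eqref{eq:we_demand_condition_later}, namely the Piola-type identity $\operatorname{div}_x[J_\rho (\mathrm{d}\rho)^{-1}\tilde F\circ\rho] = J_\rho\,(\operatorname{div}_M \tilde F)\circ\rho$. Under the local smoothness hypothesis here this is classical, but for later applications of \cref{lemma:rubber_band_higher_dim} with merely Muckenhoupt $\mu$ and $W$ it must pass through the $W^{1,1}$ chain rules of \cref{prop:chain_rule_scalar} and \cref{prop:chain_rule_vector}. Once those are in hand, the present lemma reduces to the purely algebraic verifications sketched above.
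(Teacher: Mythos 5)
Your proposal is correct and follows essentially the same route as the paper: both reduce the similarity $\Tiso^{-1}D\Tiso = D_M$ to the two identities $J_\rho/\mu = 1/\nu$ and $J_\rho^{-1}\,\mathrm{d}\rho\, W\,\mathrm{d}\rho^\star = \nu I$ (equivalently your $\mathrm{d}\rho\,W\,\mathrm{d}\rho^\star = \mu I$), deduced from $g=\mu W^{-1}$ and $g = \mathrm{d}\rho^\star\mathrm{d}\rho$, and both verify the isometry by the change of variables $y=\rho(x)$ using the same identities. Your closing remark about the Piola transformation matches the paper's reliance on \cref{prop:chain_rule_scalar} and \cref{prop:chain_rule_vector} in the computation \eqref{eq:we_demand_condition_later}.
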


\begin{remark}
  There is a one-to-one correspondence between
  the pairs of weights $(\mu,W)$ and 
  the pairs $(g,\nu)$ of Riemannian metric and weight,
  since inversely $\mu = \nu \sqrt{\det g}$, and $W = (\nu \sqrt{\det g}) g^{-1}$. %
\end{remark}

\begin{proof}[Proof of \cref{lemma:rubber_band_higher_dim}]
  To obtain the operator $D_M$ with a single scalar weight $\nu$ on a manifold,
  in \eqref{eq:we_demand_condition_later} we require that
  \begin{equation*}
    (1/\mu) J_\rho = 1/\nu \quad \text{ and } \quad J_\rho^{-1} \D{\rho} \, W \D{\rho}^\star = \nu I ,
  \end{equation*}
  where $I$ is the identity matrix.
  The first condition yields $\mu = J_\rho \nu$.
  Since the volume change is $J_\rho = \sqrt{\det g}$,
  we have $\nu = \mu / \sqrt{\det g}$ as stated.
  For the second one,
  since the metric in a chart $\rho$ is $g = \mathrm{d}\rho^\star \mathrm{d}\rho$,
  and the matrices $\mathrm{d}\rho$ and $\mathrm{d}\rho^\star$ commute with the scalars $\nu$ and $J_\rho$, we have
  \begin{equation*}
    \frac{W}{ J_\rho \nu } = \D{\rho}^{-1} (\D{\rho}^\star)^{-1} = \big( \mathrm{d}\rho^\star \mathrm{d}\rho \big)^{-1} = g^{-1} ,
  \end{equation*}
  and so $g = \mu W^{-1}$.
  To see that the map $\Tiso$ in \eqref{eq:definition_rubber_band_higher_dim} is an isometry,
  it is enough to compute
  \begin{equation}\label{eq:check_isometry_on_scalars}
    \int_{\mathbb{R}^d} \lvert u_1(x) \rvert^2 \mu(x) \D{x} = \int_{M} \lvert v_1(y) \rvert^2 \underbrace{\frac{\mu}{\sqrt{\det g}}(y)}_{= \, \nu(y)} \D{y} %
  \end{equation}
  where $\mathrm{d}y$ is the Riemannian measure on $M$.
  Also                          %
  \begin{equation}\label{eq:check_isometry_on_vectors}
    \begin{aligned}
      \int_{\mathbb{R}^d} \langle W(x) u_2(x), u_2(x) \rangle\D{x} & =  \int_{\mathbb{R}^d} \langle W(x) (\mathrm{d}\rho_x)^\star v_2(\rho(x)), (\mathrm{d}\rho_x)^\star v_2(\rho(x)) \rangle \D{x} \\
      & = \int_M \langle W \mathrm{d}\rho^\star v_2(y), \mathrm{d}\rho^\star v_2(y) \rangle \frac{\D{y}}{\sqrt{\det g}} \\
      & = \int_M \langle \frac{1}{\sqrt{\det g}} \D{\rho} W \mathrm{d}\rho^\star v_2(y), v_2(y) \rangle \D{y} \\
      & = \int_M \lvert v_2(y) \rvert^2 \nu(y) \D{y} .
    \end{aligned}
  \end{equation}
  This concludes the proof.
\end{proof}
We aim to prove a matrix-weighted Kato square root estimate on $\Omega \subseteq \mathbb{R}^d$,
by applying \cite[Theorem 1.1]{AMR} to the one-scalar-weight operator
$D_M$ on $M$ in \eqref{eq:D_M}
and pulling back the result to $\mathbb{R}^d$.
However, this requires a modification of \cref{lemma:rubber_band_higher_dim}
since \cite[Theorem 1.1 and Theorem 1.2]{AMR} only apply to prove inhomogeneous Kato square root estimates, %
since only local square function estimates can be proved on $M$ without further hypothesis on its geometry at infinity.
As in \cite[Eq. (2.4)]{AMR} we introduce inhomogeneous first-order differential operators
\begin{align}
  \widetilde{D} & = \label{eq:inhomog_D}
  \begin{bmatrix}
    0 & I & - (1/\mu) \mathrm{div} W \\
    I & 0 & 0 \\
    \nabla & 0 & 0
  \end{bmatrix} \, \text{ acting on } \quad
                  \widetilde{\mathcalboondox{H}} \coloneqq
              \begin{bmatrix}
                L^2(\Omega, \mu) \\
                L^2(\Omega, \mu) \\
                L^2(\Omega; \mathbb{C}^d , W)
              \end{bmatrix} , \\
  \widetilde{D}_M & = \label{eq:inhomog_D_M}
  \begin{bmatrix}
    0 & I & - (1/\nu) \mathrm{div}_{M} \nu \\
    I & 0 & 0 \\
    \nabla_{M} & 0 & 0
  \end{bmatrix} \, \text{ acting on }\quad
                   \widetilde{\mathcalboondox{H}}_M \coloneqq
              \begin{bmatrix}
                L^2(M, \nu) \\
                L^2(M, \nu) \\
                L^2(TM,\nu)
              \end{bmatrix} ,
\end{align}
where divergence and $\nabla$ in \eqref{eq:inhomog_D} are on $\mathbb{R}^d$,
and the square brackets denote the sum of spaces.
The domains of the operators $\nabla$ and $\nabla_M$ are weighted Sobolev spaces
\begin{align*}
  \mathcal{H}^1_{\mu,W}(\Omega) & \coloneqq \big\{ f \in W^{1,1}_{\mathrm{loc}}(\Omega), f \in L^2_{\mathrm{loc}}(\Omega,\mu) \text{ with } \nabla f \in L^2_{\mathrm{loc}}(\Omega;\mathbb{R}^d,W) \big\}, \\
  \mathcal{H}^1_{\nu}(M) & \coloneqq \big\{ f \in W^{1,1}_{\mathrm{loc}}(M), f \in L^2_{\mathrm{loc}}(M,\nu) \text{ with } \nabla_{M} f \in L^2_{\mathrm{loc}}(T M,\nu I) \big\}
\end{align*}
respectively, so $\mathsf{dom}(\nabla) = \mathcal{H}^1_{\mu,W}(\Omega)$ and $\mathsf{dom}(\nabla_M) = \mathcal{H}^1_{\nu}(M) $.
The closed operator $-\mathrm{div}$ with domain
\begin{equation*}
  \mathsf{dom}(\mathrm{div}) = \big\{ h \in L^2_{\mathrm{loc}}(\Omega;\mathbb{R}^d,W^{-1}) \,\colon\, \mathrm{div}h \in L^2_{\mathrm{loc}}(\Omega,\mu^{-1}) \big\}
\end{equation*}
is the adjoint of $\nabla$ with respect the unweighted $L^2$ pairing.
In the same way, $-\mathrm{div}_M$ is the closed operator with domain
\begin{equation*}
  \mathsf{dom}(\mathrm{div}_M) = \big\{ h \in L^2_{\mathrm{loc}}(TM,\nu^{-1}) \,\colon\, \mathrm{div}_Mh \in L^2_{\mathrm{loc}}(M,\nu^{-1}) \big\}
\end{equation*}
and it is the adjoint of $\nabla_M$ with respect the unweighted $L^2$ pairing on $M$.

In \cref{lemma:rubber_band_higher_dim} we assumed qualitatively that $\rho$ was a smooth diffeomorphism.
The following results in this section we relax this condition to a Sobolev $W^{1,1}$ regularity, which suffices for the proof.
Already in one dimension, we have seen the usefulness of such weaker regularity assumption in \cref{ex:power_weights}. %

Consider the pullback
$\widetilde{\Tiso} \colon \widetilde{\mathcalboondox{H}}_M \to \widetilde{\mathcalboondox{H}}$
via $\rho \in W^{1,1}_{\mathrm{loc}}$ given by
\begin{equation*}
  \widetilde{\Tiso} \colon
  \begin{bmatrix}
    v_1(y) \\ v_0(y) \\ v_2(y)
  \end{bmatrix} \mapsto
  \begin{bmatrix}
    v_1 \circ \rho \\
    v_0 \circ \rho \\
    \D{\rho}^\star v_2 \circ \rho
  \end{bmatrix} \eqqcolon
  \begin{bmatrix}
    u_1(x) \\ u_0(x) \\ u_2(x)
  \end{bmatrix}. 
\end{equation*}
The map $\widetilde{\Tiso}$ preserves the domains of the operators $\widetilde{D}$ and $\widetilde{D}_M$.
\begin{lemma}
  The map $\widetilde{\Tiso}$ is an isometry and %
   $\widetilde{\Tiso}( \mathsf{dom}(\widetilde{D}_M) ) = \mathsf{dom}(\widetilde{D})$.
\end{lemma}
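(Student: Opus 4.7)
The plan is to reduce the claim to the two-slot identity already established in \cref{lemma:rubber_band_higher_dim}, noting that $\widetilde{D}_M$ and $\widetilde{D}$ are obtained from $D_M$ and $D$ by inserting a middle scalar slot on which $\widetilde{\Tiso}$ acts by the ordinary change of variables $v_0 \mapsto v_0 \circ \rho$ and on which the operators act simply by the identity.

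For the isometry, I would expand the squared norm on $\widetilde{\mathcalboondox{H}}_M$ as a sum of three integrals and transform each one under the change of variable $y = \rho(x)$. The first and second summands (the two scalar slots) each reduce to the computation \eqref{eq:check_isometry_on_scalars}, yielding the corresponding scalar terms with weight $\mu$ on $\Omega$. The third summand (the cotangent slot) reduces to \eqref{eq:check_isometry_on_vectors}, which rests on the algebraic identity $(\sqrt{\det g})^{-1} \mathrm{d}\rho \, W \, \mathrm{d}\rho^\star = \nu I$ built into the choice $g = \mu W^{-1}$, $\nu = \mu/\sqrt{\det g}$. Summing the three contributions gives the norm on $\widetilde{\mathcalboondox{H}}$; surjectivity of $\widetilde{\Tiso}$ then follows from pointwise invertibility of $\rho$ and $\mathrm{d}\rho$.

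For the domain identification, I would factor $\mathsf{dom}(\widetilde{D}_M)$ into its three natural summands: the weighted Sobolev space $\mathcal{H}^1_\nu(M)$ in the first slot, the full space $L^2(M,\nu)$ in the middle, and the domain of $(1/\nu)\mathrm{div}_M\,\nu$ in the third, and analogously for $\widetilde{D}$. The middle slot is handled by the scalar-isometry step above. For the first slot I would invoke \cref{prop:chain_rule_scalar}, which yields $\nabla u_1 = \mathrm{d}\rho^\star (\nabla_M v_1) \circ \rho$ in the weak sense; combined with the vector-slot isometry identity, this places $\nabla u_1$ in $L^2_{\mathrm{loc}}(\Omega;\mathbb{C}^d,W)$, so $u_1 \in \mathcal{H}^1_{\mu,W}(\Omega)$. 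For the third slot I would invoke \cref{prop:chain_rule_vector}, the Piola transformation in matrix-weighted $L^2$, which translates the condition $(1/\nu)\mathrm{div}_M(\nu v_2) \in L^2_{\mathrm{loc}}(M,\nu)$ into the condition $(1/\mu)\mathrm{div}(W u_2) \in L^2_{\mathrm{loc}}(\Omega,\mu)$ on $u_2 = \mathrm{d}\rho^\star (v_2 \circ \rho)$. Surjectivity onto the domains follows by rerunning the same argument with $\rho^{-1}$ in place of $\rho$.

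The hard part is not internal to this lemma but sits upstream in \cref{prop:chain_rule_scalar} and \cref{prop:chain_rule_vector}: those results must be valid at the low regularity at which $\rho$ is only a $W^{1,1}$-homeomorphism and the weights are merely locally $A_2$. Once those chain-rule and Piola statements are available, the present lemma is a routine bookkeeping reassembly of the computations used in \cref{lemma:rubber_band_higher_dim}, with a trivial middle coordinate appended.
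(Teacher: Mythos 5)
Your proposal is correct and follows essentially the same route as the paper: the isometry is the three-slot version of the computations \eqref{eq:check_isometry_on_scalars}--\eqref{eq:check_isometry_on_vectors} from \cref{lemma:rubber_band_higher_dim}, the first slot is handled by \cref{prop:chain_rule_scalar} with $v=\nu$, $V=\nu I$ (so $v_\rho=\mu$, $V_\rho=W$), the third slot by the Piola result \cref{prop:chain_rule_vector} applied, after the unitary reduction $u_2\mapsto Wu_2$, with $V=W^{-1}$, $v=\mu^{-1}$ (so $V^\rho=\nu^{-1}I$, $v^\rho=\nu^{-1}$), and the middle slot and surjectivity are trivial as you say. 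The only detail worth making explicit is that reduction by multiplication with $W$ (resp.\ $\nu$), since \cref{prop:chain_rule_vector} is stated for fields in $L^2(W^{-1})$ with divergence in $L^2(\mu^{-1})$, which is exactly how the paper invokes it.
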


\begin{proof}
  For scalar-valued functions,
  apply \cref{prop:chain_rule_scalar} with $v = \nu$ and $V = \nu I$.
  Note that since $\nu\circ\rho = \mu/J_\rho$, then $v_\rho = \mu$.
  Also, since the metric $\mathrm{d}\rho^{-1}(\mathrm{d}\rho^{-1})^\star = g^{-1} = \mu^{-1} W$,
  it follows that $V_\rho = W$.  
  For vector fields,
  if $\vec{u} \in L^2(\Omega;\mathbb{R}^d,W^{-1})$ with $\mathrm{div} \vec{u}$ in $L^2(\Omega,\mu^{-1})$,
  apply  \cref{prop:chain_rule_vector} with $V = W^{-1}$ and $v = \mu^{-1}$.
  Indeed, $V^{\rho} = \nu^{-1} I$ and $v^{\rho} = \nu^{-1}$,
  so $ J_\rho^{-1} \rho_* \vec{u} = J_\rho^{-1} \mathrm{d}\rho \vec{u} \circ \rho^{-1} \in L^2(TM, \nu^{-1} I) $ and
  \begin{equation*}
    \mathrm{div}\Big( \frac{\rho_*}{J_\rho} \vec{u} \Big) = \frac{\rho_*}{J_\rho}( \mathrm{div} \vec{u} ) \in L^2(M,\nu^{-1}).
  \end{equation*}
\end{proof}
As in the proof of \cref{lemma:rubber_band_higher_dim},
one sees that $\widetilde{\Tiso}$ is an isometry. 
A calculation as in \eqref{eq:we_demand_condition_later} shows that
\begin{equation*}
  \widetilde{\Tiso}^{-1} \widetilde{D} \widetilde{\Tiso} = \widetilde{D}_M  .
\end{equation*}

We have the following generalisation of \cref{thm:main_theorem1D}.
\begin{theorem}\label{thm:main_theorem_higher_dim}
  Let $ \Omega \subseteq \mathbb{R}^d $ be an open set,
  and let $\rho \colon \Omega \to M$ be a $W^{1,1}_{\mathrm{loc}}$ homeomorphism onto a complete, smooth Riemannian manifold $(M,h)$.
  Let $\mu, W$ be scalar and matrix weights in $A_2^{\mathrm{loc}}(\Omega)$. %
  Assume that the metric on $M$ pulled back via $\rho$ is
  \begin{equation*}
    g = \mu W^{-1}
  \end{equation*}
  and define the scalar weight $\nu = \mu / \sqrt{\det g}$ on $M$. %
  Let $\widetilde{D}$ be the differential operator in \eqref{eq:inhomog_D}
  and let $\widetilde{B}$
  be a $(\mu \oplus \mu \oplus W)$-bounded, $(\mu \oplus \mu \oplus W)$-accretive multiplication operator on $\widetilde{\mathcalboondox{H}}$
  as in \cref{def:W-boundedness_and_accretivity}.
  If the manifold $M$ has Ricci curvature bounded from below and positive injectivity radius,
  and if $\nu \in A_2^{R}(M)$, for some $R>0$,
  then $\widetilde{B} \widetilde{D}$ and $\widetilde{D} \widetilde{B}$ are bisectorial operators that
  satisfy quadratic estimates and have bounded $H^\infty$ functional calculus in $\widetilde{\mathcalboondox{H}}$.
\end{theorem}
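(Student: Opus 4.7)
The plan is to reduce the theorem to the already known manifold result \cite[Theorem 1.1]{AMR} applied to $\widetilde{D}_M \widetilde{B}_M$, where $\widetilde{B}_M \coloneqq \widetilde{\Tiso}^{-1} \widetilde{B} \widetilde{\Tiso}$, and then transfer quadratic estimates and bounded $H^\infty$ functional calculus back to $\mathbb{R}^d$ through the isometry $\widetilde{\Tiso}$. The isometry and domain identification $\widetilde{\Tiso}(\mathsf{dom}(\widetilde{D}_M)) = \mathsf{dom}(\widetilde{D})$ together with the intertwining identity $\widetilde{\Tiso}^{-1}\widetilde{D}\widetilde{\Tiso} = \widetilde{D}_M$ were already established in the lemma immediately preceding the theorem, so these are in hand.

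First, I would verify that $\widetilde{B}_M$ is $(\nu\oplus\nu\oplus\nu I)$-bounded and $(\nu\oplus\nu\oplus\nu I)$-accretive. This follows from a direct computation using $\widetilde{\Tiso}^\star = \widetilde{\Tiso}^{-1}$, exactly as in the proof of \cref{thm:main_theorem1D}: the weighted inner product $\langle \, \cdot\,,\,\cdot\,\rangle_{L^2(\nu\oplus\nu\oplus\nu I)}$ on $\widetilde{\mathcalboondox{H}}_M$ corresponds under $\widetilde{\Tiso}$ to $\langle \,\cdot\,,\,\cdot\,\rangle_{L^2(\mu\oplus\mu\oplus W)}$ on $\widetilde{\mathcalboondox{H}}$, and boundedness/accretivity in each inner product are equivalent along the unitary equivalence. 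Next, I would check that $\widetilde{D}_M$ as in \eqref{eq:inhomog_D_M} is a self-adjoint first-order differential operator on the complete Riemannian manifold $M$, of the exact type treated in \cite{AMR}: the completeness of $M$ ensures that $\nabla_M$ with domain $\mathcal{H}^1_\nu(M)$ is densely defined and closed, and the closedness of $\mathrm{div}_M$ on its natural weighted domain then supplies self-adjointness of $\widetilde{D}_M$.

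With these preparations, I would apply \cite[Theorem 1.1]{AMR} directly to $\widetilde{D}_M\widetilde{B}_M$: the geometric hypotheses on $M$ (lower bounds on Ricci and on injectivity radius) and the Muckenhoupt hypothesis $\nu\in A_2^R(M)$ are precisely those required there, and $\widetilde{B}_M$ satisfies the corresponding boundedness/accretivity conditions by step one. The conclusion is that $\widetilde{D}_M\widetilde{B}_M$ and $\widetilde{B}_M\widetilde{D}_M$ are bisectorial, satisfy quadratic estimates, and have bounded $H^\infty$ functional calculus on $\widetilde{\mathcalboondox{H}}_M$. Transferring through $\widetilde{\Tiso}$ gives the same conclusion for $\widetilde{D}\widetilde{B}$ on $\widetilde{\mathcalboondox{H}}$, and the accretivity of $\widetilde{B}$ ensures it is boundedly invertible so that $\widetilde{B}\widetilde{D} = \widetilde{B}(\widetilde{D}\widetilde{B})\widetilde{B}^{-1}$ inherits the same properties, exactly as in the one-dimensional proof.

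The main obstacle, and the reason this theorem required the preparatory material of \cref{sec:higher_dim}, is not the final transfer itself but justifying that the pull-back construction genuinely produces a self-adjoint $\widetilde{D}_M$ with matching domain on the complete manifold $M$; this is where the $W^{1,1}_{\mathrm{loc}}$ hypothesis on $\rho$ and the chain rule and Piola-type identities (\cref{prop:chain_rule_scalar,prop:chain_rule_vector} applied with the correct weights $v=\nu$, $V=\nu I$ on the scalar side and $v=\mu^{-1}$, $V=W^{-1}$ on the vector side) enter crucially. Once those identities legitimise the commuting diagram for $\widetilde{D}$ and $\widetilde{D}_M$, the rest of the proof is a routine unitary similarity argument that piggybacks on \cite{AMR}.
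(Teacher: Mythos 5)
Your proposal is correct and follows essentially the same route as the paper: conjugate by the isometry $\widetilde{\Tiso}$ to pass to $\widetilde{B}_M$, $\widetilde{D}_M$ on $M$, check the equivalence of $(\mu\oplus\mu\oplus W)$- and $(\nu\oplus\nu\oplus\nu I)$-boundedness/accretivity via $\widetilde{\Tiso}^\star=\widetilde{\Tiso}^{-1}$, invoke the self-adjointness of $\widetilde{D}_M$ (the paper cites \cite[Lemma 2.3]{AMR} for this rather than arguing it directly), apply \cite[Theorem 1.1]{AMR}, and transfer back, handling $\widetilde{B}\widetilde{D}$ versus $\widetilde{D}\widetilde{B}$ by similarity through $\widetilde{B}$.
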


\begin{remark} %
  The Riemannian manifold $M$ is assumed to be smooth
  with smooth metric.
  But since the map $\rho$ is not smooth in general,
  the pullback $g$ of the smooth metric of $M$
  on $\Omega$ may be non-smooth.
  See \cref{fig:diagram-non-smooth-structure}.
\end{remark}

\begin{figure}[h]
  \begin{tikzcd}%
    & M \\ %
    \Omega \subseteq \mathbb{R}^d \arrow[ru, "\rho"] \arrow[r, "\varphi^{-1} \circ \rho"] & \mathbb{R}^d \arrow[u, "\varphi"]
  \end{tikzcd}
  \caption{The Riemannian manifold $M$ with a chart $\varphi$ from its smooth atlas.
    A function $f$ on $M$ is smooth if $f \circ \varphi$ is smooth.
    But $f \circ \rho$ is not in general smooth
    since the map $\varphi^{-1} \circ \rho$ is only in $W^{1,1}$.}
  \label{fig:diagram-non-smooth-structure}
\end{figure}

\begin{proof}[Proof of \cref{thm:main_theorem_higher_dim}]
  Given the differential operator $\widetilde{D}$ as in \eqref{eq:inhomog_D},
  consider the operators $\widetilde{D}_M \coloneqq \widetilde{\Tiso}^{-1} \widetilde{D} \widetilde{\Tiso}$ given in \eqref{eq:inhomog_D_M}
  and the operator $\widetilde{B}_M \coloneqq \widetilde{\Tiso}^{-1} \widetilde{B} \widetilde{\Tiso}$.

  \cref{lemma:rubber_band_higher_dim} shows that the extended pullback transformation $\widetilde{\Tiso}$
  is an isometry between the weighted spaces $\widetilde{\mathcalboondox{H}}_M$ and $\widetilde{\mathcalboondox{H}}$.
  Indeed, let $ u = \widetilde{\Tiso} v$, then
  \begin{equation*}
    \langle \widetilde{\Tiso}^{-1}(\widetilde{B} u) , \widetilde{\Tiso}^{-1}(\widetilde{B} u) \rangle_{\widetilde{\mathcalboondox{H}}_M}
    = \langle \widetilde{\Tiso}\widetilde{\Tiso}^{-1}(\widetilde{B} u), \widetilde{B} u \rangle_{\widetilde{\mathcalboondox{H}}}
  \end{equation*}
  from which follows that $\widetilde{B}_M$ is
  $(\nu \oplus \nu \oplus \nu I)$-bounded, and $(\nu \oplus \nu \oplus \nu I)$-accretive if and only if
  $\widetilde{B}$ is $(\mu \oplus \mu \oplus W)$-bounded, $(\mu \oplus \mu \oplus W)$-accretive.

  The result \cite[Lemma 2.3]{AMR} implies that $\widetilde{D}_M$ is self-adjoint,
  and so
  is the operator $\widetilde{D} = \widetilde{\Tiso} \widetilde{D}_M \widetilde{\Tiso}^{-1}$, since $\widetilde{\Tiso}$ is unitary.
  By \cite[Theorem 1.1]{AMR},
  the operator $\widetilde{B}_M \widetilde{D}_M$ has bounded $H^\infty$ functional calculus in $L^2(M; \mathbb{C}^d \oplus TM, \nu I)$.
  The same holds for the operator $\widetilde{B}\widetilde{D}$ via the isometry $\widetilde{\Tiso}$,
  and for $\widetilde{D}\widetilde{B} = \widetilde{B}^{-1}(\widetilde{B}\widetilde{D})\widetilde{B}$.
\end{proof}

Analogous to \cref{cor:Kato_weighted1d},
we derive from \cref{thm:main_theorem_higher_dim} the following Kato square root estimate.

\begin{cor}
  Assume that $\rho \colon \Omega \to M$, $\mu, W$, $g,\nu$ satisfy the hypotheses of \cref{thm:main_theorem_higher_dim}.
  Consider the operator
  \begin{equation*}
    Lu \coloneqq -\frac{1}{\mu}\mathrm{div}A\nabla u -\frac{1}{\mu}\mathrm{div}(\vec{b} u) + \frac{1}{\mu}\langle\vec{c},\nabla u\rangle+ d \cdot u ,
  \end{equation*}
  where the %
  matrix 
  \begin{equation*} B \coloneqq
    \begin{bmatrix}      
       d & \mu^{-1/2}\vec{c} \, W^{-1/2} \\
       W^{-1/2}\vec{b} \mu^{-1/2} & W^{-1/2}AW^{-1/2}
    \end{bmatrix}
  \end{equation*}
  is bounded and accretive with respect to the Euclidean metric, meaning that
  \begin{equation*}
    B \in L^\infty \quad \text{ and } \quad \inf_{\substack{x \in \Omega \\ v \in \mathbb{C}^d}} \frac{\Real\langle B(x) v , v \rangle}{ \lvert v \rvert^2} \gtrsim 0.
  \end{equation*}
  Then the following Kato square root estimate 
  \begin{equation*}
    \lVert \sqrt{a L} u \rVert_{L^2(\Omega,\mu)} \eqsim \lVert \nabla u \rVert_{L^2(\Omega;\mathbb{C}^d,W)} + \lVert u \rVert_{L^2(\Omega,\mu)}
  \end{equation*}
  holds for any complex-valued function $a \in L^\infty(\Omega)$ such that $\inf_{\Omega} \Real( a )\gtrsim 1$.
\end{cor}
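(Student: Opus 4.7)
The plan is to mimic the proof of \cref{cor:Kato_weighted1d} and apply \cref{thm:main_theorem_higher_dim} to the inhomogeneous operator $\widetilde{D}$ from \eqref{eq:inhomog_D} with a multiplication operator $\widetilde{B}$ tailored to the coefficients of $L$. I would take
\begin{equation*}
  \widetilde{B} = \begin{bmatrix} a & 0 & 0 \\ 0 & d & \mu^{-1}\vec{c}^{\,T} \\ 0 & W^{-1}\vec{b} & W^{-1}A \end{bmatrix},
\end{equation*}
so that a direct computation gives $\widetilde{B}\widetilde{D}(u,0,0) = (0,\, du + \mu^{-1}\vec{c}^{\,T}\nabla u,\, W^{-1}\vec{b}u + W^{-1}A\nabla u)$ and, iterating, $(\widetilde{B}\widetilde{D})^2(u,0,0) = (aLu,\,0,\,0)$. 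To verify that $\widetilde{B}$ is $(\mu\oplus\mu\oplus W)$-bounded and accretive, one conjugates by $\mathrm{diag}(\sqrt{\mu},\sqrt{\mu},W^{1/2})$: this produces a block-diagonal multiplier with blocks $a$ and $\mathcal{B}$, where $\mathcal{B}$ is precisely the $(1{+}d)\times(1{+}d)$ coefficient matrix appearing in the hypothesis. The assumptions on $\mathcal{B}$ and on $a$ then give the required boundedness and accretivity. Applying \cref{thm:main_theorem_higher_dim} yields bounded $H^\infty$ functional calculus for $\widetilde{B}\widetilde{D}$, in particular boundedness and invertibility of $\mathrm{sgn}(\widetilde{B}\widetilde{D})$ on the closure of its range, and hence, since $\sqrt{z^2} = \mathrm{sgn}(z)\,z$,
\begin{equation*}
  \lVert \sqrt{(\widetilde{B}\widetilde{D})^2}(u,0,0)\rVert_{\widetilde{\mathcalboondox{H}}} \eqsim \lVert \widetilde{B}\widetilde{D}(u,0,0)\rVert_{\widetilde{\mathcalboondox{H}}}.
\end{equation*}

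The key structural observation is that with respect to the orthogonal decomposition $\widetilde{\mathcalboondox{H}} = V \oplus V^\perp$, where $V \coloneqq \{(u,0,0) : u \in L^2(\Omega,\mu)\}$, the operator $\widetilde{B}\widetilde{D}$ is block off-diagonal: $\widetilde{B}\widetilde{D}(V) \subseteq V^\perp$ and $\widetilde{B}\widetilde{D}(V^\perp) \subseteq V$, as the above calculation shows. Consequently $(\widetilde{B}\widetilde{D})^2$ leaves both $V$ and $V^\perp$ invariant, and by functional calculus so does $\sqrt{(\widetilde{B}\widetilde{D})^2}$. Since $(\widetilde{B}\widetilde{D})^2$ acts as $aL$ on $V$, this gives $\sqrt{(\widetilde{B}\widetilde{D})^2}(u,0,0) = (\sqrt{aL}\,u,\,0,\,0)$ and therefore $\lVert \sqrt{(\widetilde{B}\widetilde{D})^2}(u,0,0)\rVert_{\widetilde{\mathcalboondox{H}}} = \lVert \sqrt{aL}\,u\rVert_{L^2(\Omega,\mu)}$.

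To close the loop, I would compute
\begin{equation*}
  \lVert \widetilde{B}\widetilde{D}(u,0,0)\rVert_{\widetilde{\mathcalboondox{H}}}^2 = \int_\Omega \bigl\lvert \sqrt{\mu}\,d u + \mu^{-1/2}\vec{c}^{\,T}\nabla u\bigr\rvert^2 + \int_\Omega \bigl\lvert W^{-1/2}(\vec{b}u + A\nabla u)\bigr\rvert^2,
\end{equation*}
which rearranges exactly to $\lVert \mathcal{B}(\sqrt{\mu}\,u,\, W^{1/2}\nabla u)\rVert_{L^2}^2$. The boundedness and accretivity of $\mathcal{B}$ then yield
\begin{equation*}
  \lVert \mathcal{B}(\sqrt{\mu}\,u,\, W^{1/2}\nabla u)\rVert_{L^2}^2 \eqsim \lVert u\rVert_{L^2(\Omega,\mu)}^2 + \lVert \nabla u\rVert_{L^2(\Omega;\mathbb{C}^d,W)}^2,
\end{equation*}
and combining the three equivalences produces the claimed estimate. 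I expect the main conceptual step to be the identification $\sqrt{(\widetilde{B}\widetilde{D})^2}\restriction_V = \sqrt{aL}$, which rests on the block off-diagonal structure of $\widetilde{B}\widetilde{D}$ and the compatibility of the $H^\infty$ calculus with the reducing splitting $V \oplus V^\perp$; the remaining steps amount to matrix-level book-keeping that parallels the one-dimensional case.
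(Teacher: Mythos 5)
Your proposal is correct and follows essentially the same route as the paper: the same choice of $\widetilde{B}$, reduction to \cref{thm:main_theorem_higher_dim}, the identity $\sqrt{(\widetilde{B}\widetilde{D})^2}=\mathrm{sgn}(\widetilde{B}\widetilde{D})\,\widetilde{B}\widetilde{D}$, and the identification of $(\widetilde{B}\widetilde{D})^2$ on $(u,0,0)^{\transpose}$ with $aL$. The only (welcome) additions are your explicit justification, via the block off-diagonal structure with respect to $V\oplus V^{\perp}$, that $\sqrt{(\widetilde{B}\widetilde{D})^2}$ preserves $V$ and restricts there to $\sqrt{aL}$ — a step the paper asserts without comment — and your direct computation of $\lVert\widetilde{B}\widetilde{D}(u,0,0)\rVert$ in place of the paper's appeal to $\lVert\widetilde{B}\widetilde{D}(u,0,0)\rVert\eqsim\lVert\widetilde{D}(u,0,0)\rVert$.
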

\begin{proof}
  Apply \cref{thm:main_theorem_higher_dim} to $\widetilde{D}$ defined in \eqref{eq:inhomog_D}
  and coefficients
  \begin{equation*}
    \widetilde{B} =
    \begin{bmatrix}
      a & 0 & 0 \\
      0 & d & \mu^{-1}\vec{c} \\
      0 & W^{-1}\vec{b} & W^{-1}A
    \end{bmatrix}.
  \end{equation*}
  By the hypothesis on the coefficient
  and the property of $a$, the matrix $\widetilde{B}$ is $(\mu \oplus \mu \oplus W)$-bounded and $(\mu \oplus \mu \oplus W)$-accretive, see \cref{def:W-boundedness_and_accretivity}.
  By \cref{thm:main_theorem_higher_dim} the operator $\widetilde{B}\widetilde{D}$ 
  has bounded $H^\infty$ functional calculus on  $\widetilde{\mathcalboondox{H}} = L^2(\Omega,\mu)^2 \oplus L^2(\Omega;\mathbb{C}^d,W)$.
  This implies the boundedness and invertibility of the operator $\mathrm{sgn}(\widetilde{B}\widetilde{D})$,
  and so by writing $\sqrt{(\widetilde{B}\widetilde{D})^2} = \mathrm{sgn}(\widetilde{B}\widetilde{D})\widetilde{B}\widetilde{D}$ we have
  \begin{equation*}
   \Big\lVert  \sqrt{(\widetilde{B}\widetilde{D})^2}\Big[
    \begin{smallmatrix}
      u \\ 0 \\ 0
    \end{smallmatrix}
    \Big] \Big\rVert_{\widetilde{\mathcalboondox{H}}} \eqsim \Big\lVert \widetilde{B}\widetilde{D} \Big[
    \begin{smallmatrix}
      u \\ 0 \\ 0
    \end{smallmatrix}
    \Big] \Big\rVert_{\widetilde{\mathcalboondox{H}}} \eqsim \Big\lVert \widetilde{D} \Big[
    \begin{smallmatrix}
      u \\ 0 \\ 0
    \end{smallmatrix}
    \Big] \Big\rVert_{\widetilde{\mathcalboondox{H}}} \eqsim \lVert \nabla u \rVert_{L^2(\Omega,W)} + \lVert u \rVert_{L^2(\Omega,\mu)}.
  \end{equation*}
  This concludes the proof,
  since $\sqrt{(\widetilde{B}\widetilde{D})^2}$ applied to $[u \; 0 \; 0]^{\transpose}$
  equals $ [ \sqrt{a L} u \; 0 \; 0]^{\transpose}$.
\end{proof}

%

%
%
%
%
%

%
%
%

We end this section with some examples of matrix weights
and discuss when the hypotheses on the manifold $M$ associated with $\mu, W$ are met.
To obtain examples of $\mu, W$, we consider manifolds $M$ embedded in $\mathbb{R}^N$ obtained as graphs of functions
$\varphi \colon \mathbb{R}^d \to \mathbb{R}^m$, with $N = d + m$.
In \cref{thm:main_theorem_higher_dim}
we thus have
\begin{align*}
  \rho \colon  \mathbb{R}^d & \to M  \\
  x & \mapsto ( x, \varphi(x) ) = ( x, y )
\end{align*}
with Jacobian matrix
$\mathrm{d}\rho_x = (I , \mathrm{d}\varphi_x)^\transpose$.
By reverse engineering, we get from $\varphi$ an example of a Riemannian metric on $\mathbb{R}^d$
\begin{equation*}
  g = \mathrm{d}\rho_x^\star \mathrm{d}\rho_x = I + \mathrm{d}\varphi_x^\star \mathrm{d}\varphi_x .
\end{equation*}
For any choice of scalar weight $\mu$,
this yields an example of a matrix weight $W = \mu g^{-1}$.

\begin{example}\label{ex:ellipses}
  Consider the graph of
  \begin{equation}\label{eq:first_graph}
    \varphi(x_1,x_2) = \Big( \frac{x_1}{x_1^2 + x_2^2} , \frac{x_2}{x_1^2 + x_2^2} \Big) = (y_1 , y_2) ,
  \end{equation}
  for $x = (x_1 , x_2) \in \mathbb{R}^2 \setminus \{ (0,0) \} $.
  Here $\rho(x_1, x_2) = (x_1, x_2, \varphi(x_1, x_2) ) $
  and $M \subseteq \mathbb{R}^4$ is complete
  and asymptotically  isometric to $\mathbb{R}^2$ both when $\lvert x \rvert^2 = x_1^2 + x_2^2 \to + \infty$
  and when $\lvert x \rvert^2 \to 0$. %
  Therefore Ricci curvature and injectivity radius is bounded from below by a compactness argument.
  In this case
  \begin{equation*}
    g_{\varphi} = I + \mathrm{d}\varphi_x^\star \mathrm{d}\varphi_x = \left(1 + \frac{1}{\lvert x \rvert^4} \right)
    \begin{bmatrix}
      1 & 0 \\ 0 & 1
    \end{bmatrix}
  \end{equation*}
  is a conformal metric.
  Therefore this only gives scalar weighted examples of $W$ to which \cref{thm:main_theorem_higher_dim} applies.
  To see a more general matrix weight $W$ appear,
  we can tweak \eqref{eq:first_graph} by composing $\varphi$ with a non-conformal diffeomorphism.
  Consider
  \begin{equation*}
    \phi(x_1,x_2) = \Big( h\Big( \frac{x_1}{x_1^2 + x_2^2} \Big) , \frac{x_2}{x_1^2 + x_2^2} \Big)
  \end{equation*}
  where $h(t) = t \sqrt{1+t^2} $, for $t \in \mathbb{R}$.
  Again $M$ is asymptotically isometric to $\mathbb{R}^2$ both as $\lvert x \rvert^2 \to \infty$  and when $\lvert x \rvert^2 \to 0$,
  so the geometric hypotheses on $M$ are satisfied.
  To see that the metric $g_{\phi}$ obtained from $\phi$, and hence the matrix $W$, is not equivalent to a scalar weight,
  we verify that the singular values of $\mathrm{d}\phi_x$ do not have bounded quotient.
  We calculate
  \begin{align*}
    \partial_{x_1} \phi (t,0) & = \Big( h'(1/t) \cdot (-1/t^2) , 0 \Big) \\
    \partial_{x_2} \phi (t,0) & = \Big( 0 , 1/t^2 \Big) 
  \end{align*}
  so the ratio $\lvert \partial_{x_1} \phi \rvert / \lvert \partial_{x_2} \phi  \rvert (t,0) = \lvert h'(1/t) \rvert \eqsim 1/t \to + \infty$ as $t \to 0^+$.
  The geodesic discs in the metric $g_{\phi}$ are shown in \cref{fig:ellipses1}.
\end{example}

To apply \cref{thm:main_theorem_higher_dim} we need that the Riemannian manifold $(M,h)$ satisfies the geometric hypothesis in \cite[Theorem 1.1]{AMR},
namely that the Ricci curvature $\mathrm{Ric}(M)$ is bounded from below and $M$ has a positive injectivity radius.
In a forthcoming paper, we shall however relax the positive injectivity radius assumption in \cite{AMR}, so that \cref{thm:main_theorem_higher_dim} applies to this example.
\begin{example}\label{ex:example2}
  Let $M$ be the graph of the scalar function $\varphi(x,y) = (x^2 + y^2)^{-a}$, for $a >0$.
  The Gaussian and Ricci curvature $\mathrm{Ric}(M)$ goes to $- (x^2 + y^2)^{2a}$ when $x^2 + y^2 \to 0^+$.
  This can be checked via the Brioschi formula for the Gaussian curvature $K$ in terms of the first fundamental form,
  see \cite[page 13]{Gauss1902}.
  For a surface described as graph of the function $z= \varphi(x,y)$, as in our case, we have: 
  \begin{equation*}
    K = \frac{\varphi_{x x} \varphi_{x y}-\varphi_{x y}^2}{\left(1+\varphi_x^2+\varphi_y^2\right)^2}=-\frac{(1+2 a) 4 a^2 (x^2+y^2)^{2 a}}{\left[(x^2+y^2)^{2 a+1}+ 4 a^2\right]^2} \eqsim -\left(x^2+y^2\right)^{2 a}
  \end{equation*}
  as $\lvert(x,y)\rvert \rightarrow 0^{+}$, and $a > 0$.
  So the Ricci curvature is bounded below,
  but the injectivity radius is not bounded away from zero. %
  Indeed, as discussed in \cite[\S 2.1]{AMR},
  the geometric hypothesis in \cite[Theorem 1.1]{AMR} implies in particular
  that geodesic balls of radius $1$ are Lipschitz diffeomorphic %
  to Euclidean balls. But this is not true in this example,
  so \cite{AMR} does not apply to this manifold.
  Geodesic discs in this metric $g_{\phi}$ are shown in \cref{fig:ellipses2}.
\end{example}

\section{Matrix degenerate Boundary Value Problems}
\label{sec:BVPs}

We show in this final section how the methods in this paper yield
solvability estimates of elliptic Boundary Value Problems (BVPs) for matrix-degenerate divergence form equations
\begin{equation}\label{eq:div_form}
  \mathrm{div} A \nabla u = 0
\end{equation}
on a compact manifold $\Omega$ with Lipschitz boundary $\partial \Omega$.
We assume that there exists a matrix weight $V$
that describes the degeneracy of the coefficients $A$,
in the following way.
\begin{lemma}\label{lemma:V-accretivity}%
  Let $V$ be a matrix weight and let $A$ be a multiplication operator.
  The following are equivalent:
  \begin{itemize}
  \item $V^{-1/2}A V^{-1/2}$ is uniformly bounded and accretive;
  \item $V^{-1}A$ is $V$-bounded and $V$-accretive;
  \item for all vectors $v,w \in \mathbb{C}^{d+1}$ we have
    \begin{equation}\label{eq:uniformly_bdd_accretive}
      \Real \langle A v , v \rangle \gtrsim \langle V v , v\rangle\quad \text{ and } \quad \lvert \langle A v , w \rangle\rvert \lesssim \langle V v, w\rangle.
    \end{equation}
  \end{itemize}
\end{lemma}

A weak solution $u$ to \eqref{eq:div_form} is a function such that $\nabla u \in L^2_{\mathrm{loc}}(T\Omega,V)$,
where $T\Omega$ is the tangent bundle on $\Omega$.
Since the weighted space $L^2_{\mathrm{loc}}(T\Omega,V) \hookrightarrow L^1_{\mathrm{loc}}(T\Omega)$,
then $A \nabla u \in L^1_{\mathrm{loc}}(T\Omega)$ and $\nabla u \in L^1_{\mathrm{loc}}(T\Omega)$, so $u \in W^{1,1}_{\mathrm{loc}}(T\Omega)$ by Poincaré inequality.
Further we assume given a closed Riemannian manifold $M_0$ and, for $\delta > 0$, a bi-Lipschitz map
\begin{equation}
  \label{eq:rho_0}
  \begin{aligned}
    \rho_0 \colon [0,\delta) \times M_0 & \to U \subseteq \Omega \, , \\
    (t , x) & \mapsto \rho_0(t,x)
  \end{aligned}
\end{equation}
between a finite part of the cylinder $\mathbb{R} \times M_0$ and a neighbourhood $U$ of the boundary $\partial\Omega$,
so that $\rho_0(\{0\}\times M_0) = \partial\Omega$. See \cref{fig:cylinders}.
When $\partial\Omega$ is a strongly Lipschitz boundary, that is, when $\partial\Omega$ is locally the graph of a Lipschitz function, such map $\rho_0$ can be constructed using a smooth vector field that is transversal to $\partial\Omega$.

To analyse a weak solution $u$ of \eqref{eq:div_form} near $\partial\Omega$,
we define the pullback $u_0 \coloneqq u \circ \rho_0$ on the cylinder $\mathcal{C}_0 \coloneqq [0,\delta) \times M_0$.
Then $u_0$ satisfies
\begin{equation}\label{eq:div_formA_0}
  \mathrm{div}_{\mathcal{C}_0}A_0\nabla_{\mathcal{C}_0} u_0 = 0 ,
\end{equation}
with coefficients
\begin{equation}
  \label{eq:A_0_coefficients}
  A_0 \coloneqq J_{\rho_0} (\rho_0)_*^{-1} A (\rho_0^*)^{-1}
\end{equation}
where $(\rho_0)_*$ denotes the pushforward via $\rho_0$, so $J_{\rho_0}^{-1}(\rho_0)_*(v) \coloneqq J_{\rho_0}^{-1} \mathrm{d}\rho_0 (v \circ \rho_0^{-1})$ is the Piola transformation,
and $\rho_0^* v = (\mathrm{d}\rho_0)^\star v\circ \rho_0 $ denotes the pullback via $\rho_0$.
See \cite[\S 7.2 and Example 7.2.12]{Rosen2019} for more details on this transformation.
The differential operators in \eqref{eq:div_formA_0} are
\begin{equation}\label{eq:full_gradient_and_div}
  \begin{aligned}
    \nabla_{\mathcal{C}_0}u_0 & \coloneqq [\partial_{t} u_0 , \nabla_{M_0}u_0]^{\transpose} , \\
    \mathrm{div}_{\mathcal{C}_0} \vec{v}_0 & \coloneqq \partial_t (e_0 \cdot\vec{v}_0) + \mathrm{div}_{M_0} (\vec{v}_0)_{\Vert} ,
  \end{aligned}
\end{equation}
where $e_0$ denotes the vertical unit vector along the cylinder,
and $(\vec{v}_0)_{\Vert}$ is the tangential part of $\vec{v}_0$.
Define
the pulled-back matrix weight $V_0 \coloneqq J_{\rho_0} (\rho_0)_*^{-1} V (\rho_0^*)^{-1}$.
\begin{lemma}\label{lemma:equivalence_VA_and_V_0A_0}
  The matrix $V^{-1/2} A V^{-1/2}$ is uniformly bounded and accretive
  on a neighbourhood $U$ of the boundary $\partial\Omega$ if and only if
  $V_0^{-1/2}A_0V_0^{-1/2}$ is uniformly bounded and accretive on $[0,\delta) \times M_0$.
\end{lemma}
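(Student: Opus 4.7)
The plan is to reduce both conditions to their pointwise quadratic form characterisations via the preceding lemma, and then to observe that $A$ and $V$ are transported to $A_0$ and $V_0$ by \emph{exactly the same} conjugation, so the Jacobian factors cancel cleanly and the inequalities transfer verbatim. Concretely, I invoke the characterisation that $V^{-1/2}AV^{-1/2}$ is uniformly bounded and accretive if and only if
\begin{equation*}
  \Real \langle A(y)\eta,\eta\rangle \gtrsim \langle V(y)\eta,\eta\rangle \quad \text{and} \quad \lvert \langle A(y)\eta,\zeta\rangle\rvert \lesssim \langle V(y)\eta,\eta\rangle^{1/2}\langle V(y)\zeta,\zeta\rangle^{1/2}
\end{equation*}
uniformly in $y\in U$ and in all tangent vectors $\eta,\zeta \in T_y\Omega$, with the analogous statement for $V_0^{-1/2}A_0V_0^{-1/2}$ on $\mathcal{C}_0$.

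First I would compute, at almost every $x\in\mathcal{C}_0$ where $\rho_0$ is differentiable (by Rademacher, using the bi-Lipschitz hypothesis), and $y = \rho_0(x)$, the pointwise identity
\begin{equation*}
  A_0(x) = J_{\rho_0}(x)\,(\mathrm{d}\rho_0\restriction_x)^{-1} A(y) \bigl((\mathrm{d}\rho_0\restriction_x)^{\star}\bigr)^{-1},
\end{equation*}
and the identical formula with $A$ replaced by $V$. Taking the sesquilinear pairing with $\xi,\zeta\in T_x\mathcal{C}_0$ and moving $(\mathrm{d}\rho_0\restriction_x)^{-1}$ across the inner product as the adjoint yields
\begin{equation*}
  \langle A_0(x)\xi,\zeta\rangle = J_{\rho_0}(x)\,\langle A(y)\eta,\tilde\zeta\rangle, \qquad \langle V_0(x)\xi,\zeta\rangle = J_{\rho_0}(x)\,\langle V(y)\eta,\tilde\zeta\rangle,
\end{equation*}
where $\eta = ((\mathrm{d}\rho_0\restriction_x)^{\star})^{-1}\xi$ and $\tilde\zeta = ((\mathrm{d}\rho_0\restriction_x)^{\star})^{-1}\zeta$. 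Crucially, the Jacobian factor and the change-of-variables on tangent vectors are the same for both $A$ and $V$.

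Setting $\zeta=\xi$ gives $\Real\langle A_0(x)\xi,\xi\rangle = J_{\rho_0}(x)\Real\langle A(y)\eta,\eta\rangle$ and $\langle V_0(x)\xi,\xi\rangle = J_{\rho_0}(x)\langle V(y)\eta,\eta\rangle$, so the accretivity inequality transfers with the same constant. For the boundedness, the factor $J_{\rho_0}(x)$ on the right of the identity for $A_0$ is absorbed as $J_{\rho_0}(x)^{1/2}\cdot J_{\rho_0}(x)^{1/2}$ into the two factors coming from $V_0$, again with the same constant. Since $\mathrm{d}\rho_0\restriction_x$ is a linear isomorphism at almost every $x$, the correspondence $\xi\leftrightarrow\eta$ is a bijection of tangent spaces, so the inequalities hold for all $\eta$ at $y\in U$ uniformly if and only if they hold for all $\xi$ at $x\in\mathcal{C}_0$ uniformly. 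Since $\rho_0\colon \mathcal{C}_0 \to U$ is a bi-Lipschitz bijection, uniformity in $y$ and uniformity in $x$ are equivalent, and we are done.

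There is no substantial obstacle here: the only thing to verify is that $A$ and $V$ are pulled back by exactly the same rule (they are, by construction of $V_0$), so that the Jacobian and the transpose-inverse cleanly cancel in the ratio. The bi-Lipschitz assumption on $\rho_0$ guarantees that $\mathrm{d}\rho_0$ and its inverse exist a.e. and are uniformly bounded, which is what makes the pointwise computation valid almost everywhere and makes the uniform statements equivalent.
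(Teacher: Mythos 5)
Your proof is correct and follows essentially the same route as the paper, which simply asserts that condition \eqref{eq:uniformly_bdd_accretive} for $(A,V)$ is equivalent to the same condition for $(A_0,V_0)$; you have filled in the underlying computation, namely that $A$ and $V$ are conjugated by the identical Piola-type transformation so the Jacobian and $(\mathrm{d}\rho_0^{\star})^{-1}$ factors cancel in the quadratic forms. (Your use of $\langle V\eta,\eta\rangle^{1/2}\langle V\zeta,\zeta\rangle^{1/2}$ on the right of the boundedness condition is the correct reading of the characterisation and is exactly what lets $J_{\rho_0}$ split evenly between the two factors.)
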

Indeed,
the condition \eqref{eq:uniformly_bdd_accretive} for $A$ and $V$
is seen to be equivalent to \eqref{eq:uniformly_bdd_accretive} for $A_0$ and $V_0$.
To obtain solvability estimates,
we require that the matrix weight $V_0$ has the structure
\begin{equation}\label{eq:structure_of_V_0}
  V_0(t,x) =
  \begin{bmatrix}
    \mu(x) & 0 \\
    0 & W(x)
  \end{bmatrix},
\end{equation}
meaning that $V_0$ is constant along the cylinder $\mathcal{C}_0$
and that the vertical direction is a principal direction of $V_0$.
The functions $\mu$ and $W$ are assumed to be scalar and matrix weights on $M_0$, respectively.
Using a transformation of coefficients $A \mapsto B$ from \cite{AAMc2010},
the divergence form equation \eqref{eq:div_formA_0} can be turned into an evolution equation
\begin{equation}
  \label{eq:DBevolution}
  (\partial_t + DB)f_0 = 0 
\end{equation}
for the conormal gradient $f_0 \coloneqq [(1/\mu) \partial_{\nu_{A_0}} u_0 , \nabla_{M_0} u_0]^{\transpose}$ of $u_0$ on the cylinder $[0,\delta)\times M_0$.
Here $\partial_{\nu_{A_0}} u_0 \coloneqq e_0 \cdot A_0\nabla_{\mathcal{C}_0}u_0$ is the conormal derivative.
We make this correspondence precise in the following lemma.
\begin{lemma}\label{lemma:equivalence_divform_andCR}
  A function $u_0$ is a weak solution to the divergence form equation
  \begin{equation*}
    \mathrm{div}_{\mathcal{C}_0}A_0\nabla_{\mathcal{C}_0} u_0 = 0 \, , \quad \text{ with } A_0 =
    \begin{bmatrix}
      a & b \\
      c & d
    \end{bmatrix},
  \end{equation*}
  if and only if its conormal gradient $f_0$ solves the Cauchy--Riemann system \eqref{eq:DBevolution}
  with
  \begin{equation*}
    D =
    \begin{bmatrix}
      0 & -(1/\mu) \mathrm{div}_{M_0} W \\
      \nabla_{M_0} & 0
    \end{bmatrix} \, \text{ and }
    B =
    \begin{bmatrix}
      \mu a^{-1} & - a^{-1} b \\
      W^{-1} c a^{-1}\mu & W^{-1} (d - c a^{-1}b) 
    \end{bmatrix}.
  \end{equation*}
  The operator $D$ is self-adjoint on $L^2(M_0,\mu) \oplus L^2(TM_0, W)$ and $B$ is $(\mu\oplus W)$-bounded and $(\mu\oplus W)$-accretive.
\end{lemma}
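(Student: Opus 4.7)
My plan is to verify the correspondence by algebraic manipulation of the components of $f_0$ and $B f_0$, and then to establish the operator-theoretic properties of $D$ and $B$ separately. Writing $f_0 = (f_0^{\perp}, f_0^{\Vert})^{\transpose}$ with $f_0^{\perp} = (1/\mu)\partial_{\nu_{A_0}} u_0 = (1/\mu)(a\,\partial_t u_0 + b\,\nabla_{M_0} u_0)$ and $f_0^{\Vert} = \nabla_{M_0} u_0$, I would first solve the top identity for the normal derivative,
\[
\partial_t u_0 = a^{-1}\bigl(\mu f_0^{\perp} - b f_0^{\Vert}\bigr),
\]
which matches the first row of $B$ applied to $f_0$, giving $(B f_0)^{\perp} = \partial_t u_0$. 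Substituting this expression into the tangential flux $c\,\partial_t u_0 + d\,\nabla_{M_0} u_0$ and rearranging yields
\[
W (B f_0)^{\Vert} = c a^{-1}\mu\, f_0^{\perp} + (d - c a^{-1} b)\, f_0^{\Vert} = (A_0 \nabla_{\mathcal{C}_0} u_0)_{\Vert},
\]
which matches the second row of $B$. With these two identifications in place, the component-wise expansion of $(\partial_t + DB) f_0 = 0$ using \eqref{eq:full_gradient_and_div} reduces to the divergence form equation $\mathrm{div}_{\mathcal{C}_0}(A_0 \nabla_{\mathcal{C}_0} u_0) = 0$ in the normal component, and to the commutation relation for the mixed derivatives $\partial_t$ and $\nabla_{M_0}$ applied to $u_0$ in the tangential component, the latter being automatic for $u_0 \in W^{1,1}_{\mathrm{loc}}$. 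For the converse direction I would recover $u_0$ by integrating $(B f_0)^{\perp}$ in $t$ along the cylinder and choosing the integration constants on each slice so that $\nabla_{M_0} u_0 = f_0^{\Vert}$; the tangential component of the CR system is exactly the integrability condition needed for this reconstruction to be consistent.

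For self-adjointness of $D$ on $L^2(M_0, \mu) \oplus L^2(T M_0, W)$, I would invoke the weighted integration by parts identity
\[
\int_{M_0} \langle W \nabla_{M_0} u_1, v_2 \rangle \,\D{y} = \int_{M_0} \bigl(-(1/\mu)\mathrm{div}_{M_0}(W v_2)\bigr)\, u_1 \,\mu\, \D{y}
\]
on the closed manifold $M_0$, which identifies the two off-diagonal blocks of $D$ as formal adjoints in the weighted inner product; closedness of both $\nabla_{M_0}$ and $(1/\mu)\mathrm{div}_{M_0} W$ on their natural maximal domains upgrades symmetry to self-adjointness.

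The boundedness and accretivity of $B$ I would reduce, via the equivalence in the auxiliary lemma preceding the statement, to uniform boundedness and accretivity of $V_0^{1/2} B V_0^{-1/2}$ with $V_0 = \mathrm{diag}(\mu, W)$. A direct computation identifies $V_0^{1/2} B V_0^{-1/2}$ as (a rescaling of) the Axelsson--Keith--McIntosh transform of $V_0^{-1/2} A_0 V_0^{-1/2}$, cf. \cite{AAMc2010}, and this transform preserves the class of uniformly bounded accretive matrices. The main obstacle here is the bookkeeping of the Schur complement: one needs $a$ to be invertible with $\Real\, a \gtrsim \mu$ in order to make $d - c a^{-1} b$ well-defined and controlled. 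This invertibility follows from the accretivity of $A_0$ by testing against $e_0$, since $a = e_0 \cdot A_0 e_0$ and $\Real\,\langle A_0 e_0, e_0\rangle \gtrsim \langle V_0 e_0, e_0\rangle = \mu$; after that, the block-matrix estimates are routine.
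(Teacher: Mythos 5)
Your proof is correct, and its key mechanism --- the transform $\mathcalboondox{I}$ of \cite{AAMc2010} together with the fact that it preserves uniform boundedness and accretivity --- is the same as in the paper, but you organise the argument differently. The paper starts from the known unweighted equivalence \eqref{eq:unweighted_DB_system} of \cite{AAMc2010, AMR} and inserts the weights by conjugating with diagonal weight matrices, using the commutation identity \eqref{eq:Involution_with_weights} to recognise $B$ as $\mathcalboondox{I}\big(\mathrm{diag}(\mu^{-1},W^{-1})A_0\big)$; you instead verify the two rows of $B$ componentwise and then observe that $V_0^{1/2}BV_0^{-1/2}$ equals $\mathcalboondox{I}\big(V_0^{-1/2}A_0V_0^{-1/2}\big)$ --- in fact exactly, not merely up to rescaling --- which is arguably the cleaner way to pass from \cite[Proposition 3.2]{AAMc2010} to the $(\mu\oplus W)$-statement. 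Your version also supplies two points the paper's proof leaves implicit: the self-adjointness of $D$, via weighted integration by parts on the closed manifold $M_0$ identifying the off-diagonal blocks as mutually adjoint closed operators, and the invertibility of $a = e_0\cdot A_0e_0$ with $\Real\, a \gtrsim \mu$, without which $\mathcalboondox{I}(A_0)$ is not even defined. Two small caveats: when expanding $(\partial_t + DB)f_0 = 0$ against \eqref{eq:full_gradient_and_div}, double-check the sign of the tangential divergence term, since the convention is inherited from \cite{AAMc2010} and is easy to miss by a factor $-1$; and the reconstruction of $u_0$ by $t$-integration in your converse is superfluous here, because the lemma takes $u_0$ as given in both directions, so the converse is just the same componentwise identity read backwards.
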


\begin{proof}
  Consider the transformation of the coefficient $A_0 \mapsto \mathcalboondox{I}(A_0)$ given by
  \begin{equation*}\mathcalboondox{I}\left(
      \begin{bmatrix}
        a & b \\
        c & d
      \end{bmatrix}\right)
    =
    \begin{bmatrix}
      a^{-1} & - a^{-1} b \\
      c a^{-1} & d - c a^{-1}b
    \end{bmatrix}.
  \end{equation*}
  This map is an involution and
  preserves accretivity and boundedness \cite[Proposition 3.2]{AAMc2010}.
  Following \cite{AAMc2010, AMR} the divergence form equation \eqref{eq:div_formA_0} %
  is equivalent to
  \begin{equation}\label{eq:unweighted_DB_system}
    \left( \partial_t +
    \begin{bmatrix}
      0 & -\mathrm{div}_{M_0} \\
      \nabla_{M_0} & 0
    \end{bmatrix}
    \mathcalboondox{I}(A_0)\right)
    \begin{bmatrix}
      \partial_{\nu_{A_0}} u_0 \\ \nabla_{M_0} u_0
    \end{bmatrix} =
    \begin{bmatrix}
      0 \\ 0
    \end{bmatrix}.
  \end{equation}
  Then a computation shows that
   \begin{equation}\label{eq:Involution_with_weights}
     \mathcalboondox{I}\left(
       \begin{bmatrix}
         v_1 & 0 \\
         0 & W_1
       \end{bmatrix}
       \begin{bmatrix}
         a & b \\
         c & d
       \end{bmatrix}
       \begin{bmatrix}
         v_2 & 0 \\
         0 & W_2
       \end{bmatrix}\right) =
     \begin{bmatrix}
       v_2^{-1} & 0 \\
       0 & W_1
     \end{bmatrix}
     \mathcalboondox{I}(A_0)
     \begin{bmatrix}
       v_1^{-1} & 0 \\
       0 & W_2
     \end{bmatrix}.
   \end{equation}
   We introduce weights into the system \eqref{eq:unweighted_DB_system} as following:
   \begin{align*}%
     \begin{bmatrix}
       1/\mu & 0 \\
       0 & I
     \end{bmatrix}
     \Big( \partial_t & + 
    \begin{bmatrix}
      0 & -\mathrm{div}_{M_0} \\
      \nabla_{M_0} & 0
    \end{bmatrix}
    \begin{bmatrix}
      1 & 0 \\
      0 & W
    \end{bmatrix}
    \begin{bmatrix}
      1 & 0 \\
      0 & W^{-1}
    \end{bmatrix}
    \mathcalboondox{I}(A_0)
    \begin{bmatrix}
      \mu & 0 \\
      0 & I
    \end{bmatrix}
    \begin{bmatrix}
      1/\mu & 0 \\
      0 & I
    \end{bmatrix}\Big)
     \\
     & =
    \Big( \partial_t + D
    \begin{bmatrix}
      1 & 0 \\
      0 & W^{-1}
    \end{bmatrix}
    \mathcalboondox{I}(A_0)
    \begin{bmatrix}
      \mu & 0 \\
      0 & I
    \end{bmatrix}\Big)
    \begin{bmatrix}
      1/\mu & 0 \\
      0 & I
    \end{bmatrix},
   \end{align*}
   where 
   we used that multiplication by $(1/\mu)$ and $\partial_t$ commute since $\mu$ is independent of $t$.
   Using \eqref{eq:Involution_with_weights} we define
   \begin{equation*}
     B \coloneqq
     \begin{bmatrix}
       1 & 0 \\
       0 & W^{-1}
     \end{bmatrix}
     \mathcalboondox{I}(A_0)
     \begin{bmatrix}
       \mu & 0 \\
       0 & I
     \end{bmatrix}
     = \mathcalboondox{I}\Big(
     \begin{bmatrix}
       \mu^{-1} & 0 \\
       0 & W^{-1}
     \end{bmatrix}
     A_0
     \begin{bmatrix}
       1 & 0 \\
       0 & I
     \end{bmatrix}\Big).
   \end{equation*}
   The argument of $\mathcalboondox{I}$ on the right-hand side is $(\mu \oplus W)$-bounded and $(\mu \oplus W)$-accretive.
   Since $\mathcalboondox{I}$ preserves accretivity and boundedness,
    $B$ is uniformly bounded and accretive.
   The reader can check that $B$ coincides with the expression in the statement of the lemma.
\end{proof}

We note that $D B$, with $D$ and $B$ from \cref{lemma:equivalence_divform_andCR},
has the same structure as the operators considered in \cref{sec:higher_dim},
if we replace $\mathbb{R}^d$ by a compact manifold $M_0$.
As in \cref{sec:higher_dim}, we use a metric on $M_0$ adapted to the weights $\mu,W$:
we assume the existence of a smooth, closed Riemannian manifold $(M_1,g_1)$
and a $W^{1,1}_{\mathrm{loc}}$-homeomorphism $ \rho \colon M_0 \to M_1 $
such that  the pullback of
the metric $g_1$ on $M_1$ via $\rho$
 is
\begin{equation*}
  g_0 \coloneqq \rho^* g_1 = \mu W^{-1}, %
\end{equation*}
and we defined the scalar weight
\begin{equation}
  \label{eq:define_nu}
  \nu \coloneqq \rho_*\mu/\sqrt{\det g_1}
\end{equation}
on $M_1$,
where $\rho_*\mu = \mu \circ \rho^{-1}$ denotes the pushforward via $\rho$.
We extend the map $\rho$ to a map between the corresponding cylinders by setting
\begin{equation*}
  \begin{aligned}
    \rho_1 \colon [0,\delta) \times M_0 \to & [0,\delta) \times M_1 \\
    (t,x) \mapsto & (t,\rho(x)).
  \end{aligned}
\end{equation*}
The extension of the Riemannian metric on the cylinder
and its pullback via $\rho_1$ are
\begin{equation}\label{eq:extension_of_metric_to_cylinder}
  \widetilde{g_1} \coloneqq
  \begin{bmatrix}
    1 & 0 \\
    0 & g_1
  \end{bmatrix}, \quad
  \widetilde{g_0} = \rho_1^*\widetilde{g_1} \coloneqq
  \begin{bmatrix}
    1 & 0 \\
    0 & \mu W^{-1}
  \end{bmatrix}.
\end{equation}
In the following, the variable $x$ is in $M_0$, while $y = \rho(x) \in M_1$.
We denote by $\mathrm{d}x$, $\mathrm{d}y$ and $\mathrm{d}z$ the Riemannian measures on
$M_0$, $M_1$ and on $\Omega$, respectively. See \cref{fig:cylinders}.
We also denote by $\mathrm{dist}_0$ and $\mathrm{dist}_1$
the distance functions on $M_0$ and $M_1$ induced by $g_0$ and $g_1$.

\begin{figure}[tbh]
    \includesvg[scale=1.0]{figures/cylinders.svg}
    \caption{The neighbourhood $U$ of $\partial\Omega$ in $\Omega$ is transformed by the bi-Lipschitz map $\rho_0^{-1}$ into the cylinder $[0,\delta)\times M_0$ with anisotropic degenerate coefficients $A_0$.
        The coefficients $A_1$ on the cylinder $[0,\delta)\times M_1$ are isotropically degenerate.}\label{fig:cylinders}
\end{figure}

Note that $A_1$ is isotropically degenerate, meaning that $V_1 = \nu I$ is a scalar weight in each component.
Weak solutions to the anisotropically degenerate equation \eqref{eq:div_formA_0}
correspond to weak solutions to an isotropically degenerate equation on $[0,\delta)\times M_1$.
\begin{lemma}
  Define the coefficients $A_1$  on the cylinder $[0,\delta)\times M_1$ by
  \begin{equation*}
    A_1 \coloneqq \frac{1}{J_{\rho_1}}(\rho_1)_* A_0 \rho_1^* = \frac{1}{J_{\rho_1}} \mathrm{d}\rho_1(A_0 \circ \rho_1^{-1})\mathrm{d}\rho_1^\star.
  \end{equation*}
  Then $A_1/\nu$ is uniformly bounded and accretive.
  Moreover, the function $u_1 = u_0 \circ \rho_1^{-1}$ on $\mathcal{C}_1 = (0,\delta)\times M_1$ is a weak solution to
  \begin{equation}\label{eq:isotr_degen_on_N_1}
    \mathrm{div}_{\mathcal{C}_1}A_1\nabla_{\mathcal{C}_1} u_1 = 0
  \end{equation}
  if and only if $u_0$ is a weak solution to
  \begin{equation}\label{eq:div_form_eq_on_N_0}
    \mathrm{div}_{\mathcal{C}_0}A_0\nabla_{\mathcal{C}_0} u_0 = 0
  \end{equation}
  on $\mathcal{C}_0 = (0,\delta)\times M_0$.
\end{lemma}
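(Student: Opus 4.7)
The plan is to dispatch the two assertions in order: first the $\nu$-boundedness and $\nu$-accretivity of $A_1$, and then the equivalence of weak solutions via a Piola-type change of variables.

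For the first assertion, I will reduce to a block-matrix computation that mirrors Lemma \ref{lemma:rubber_band_higher_dim}. Starting from the characterisation of $V_0$-boundedness and $V_0$-accretivity of $A_0$ in Lemma \ref{lemma:equivalence_VA_and_V_0A_0}, I substitute $w = \mathrm{d}\rho_1^\star v$ in the pointwise inequalities $\Real\langle A_0 w, w\rangle \gtrsim \langle V_0 w, w\rangle$ and $\lvert\langle A_0 w, w'\rangle\rvert \lesssim \langle V_0 w, w\rangle^{1/2}\langle V_0 w', w'\rangle^{1/2}$, and divide by $J_{\rho_1}$. This produces the corresponding inequalities with $A_1$ on the left and with $J_{\rho_1}^{-1}\mathrm{d}\rho_1 V_0\mathrm{d}\rho_1^\star$ on the right. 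Since $\mathrm{d}\rho_1 = \mathrm{diag}(1,\mathrm{d}\rho)$ acts trivially in the $t$-direction, $V_0 = \mathrm{diag}(\mu,W)$ is block-diagonal, $J_{\rho_1} = J_\rho$, and $W = \mu g_0^{-1} = \mu\,\mathrm{d}\rho^{-1}(\mathrm{d}\rho^\star)^{-1}$, the block-diagonal computation gives
\begin{equation*}
  \frac{1}{J_{\rho_1}}\mathrm{d}\rho_1 V_0 \mathrm{d}\rho_1^\star
  = \mathrm{diag}\Big(\frac{\mu}{J_\rho},\;\frac{\mathrm{d}\rho\,W\,\mathrm{d}\rho^\star}{J_\rho}\Big) = \nu I,
\end{equation*}
exactly as in Lemma \ref{lemma:rubber_band_higher_dim}. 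Hence $A_1/\nu$ is uniformly bounded and accretive.

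For the equivalence of weak solutions, I will test the equation on $\mathcal{C}_0$ against $\phi_0 = \phi_1\circ\rho_1$ and change variables via $\rho_1$. The chain rule of Proposition \ref{prop:chain_rule_scalar}, applied on each horizontal slice of the cylinder (where $\rho_1$ acts as $\rho$) and trivially on the $t$-direction (where $\rho_1$ is the identity), gives $\nabla_{\mathcal{C}_0} f_0 = \mathrm{d}\rho_1^\star(\nabla_{\mathcal{C}_1} f_1)\circ\rho_1$ whenever $f_0 = f_1\circ\rho_1$. Substituting into the bilinear form and using the change of variables $y = \rho(x)$ with Jacobian $J_{\rho_1}^{-1}$, which is precisely the factor built into the definition of $A_1$, I obtain
\begin{equation*}
  \int_{\mathcal{C}_0}\langle A_0\nabla_{\mathcal{C}_0} u_0,\nabla_{\mathcal{C}_0}\phi_0\rangle\,\D{x}\D{t}
  = \int_{\mathcal{C}_1}\langle A_1\nabla_{\mathcal{C}_1} u_1,\nabla_{\mathcal{C}_1}\phi_1\rangle\,\D{y}\D{t}.
\end{equation*}
Since the correspondence $\phi_0\leftrightarrow\phi_1$ via composition with $\rho_1$ is a bijection between admissible test functions on the two cylinders, the two weak formulations vanish for all test functions simultaneously, establishing the equivalence.

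The main obstacle I foresee is the rigorous justification of the chain rule and the change of variables for the merely $W^{1,1}_{\mathrm{loc}}$-homeomorphism $\rho_1$, with $\nabla_{\mathcal{C}_0} u_0$ lying only in the weighted space $L^2_{\mathrm{loc}}(V_0)$ and not in any unweighted $L^2$. This is precisely the setting handled by Proposition \ref{prop:chain_rule_scalar} together with its vector-valued counterpart, already used to match the domains of $D$ and $D_M$ in Theorem \ref{thm:main_theorem_higher_dim}. Because $\rho_1$ acts as the identity in $t$, the cylinder structure adds no new analytic subtlety, and those chain rule results apply slice-by-slice.
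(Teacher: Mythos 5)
Your proposal is correct and follows essentially the same route as the paper: the first part is the same block computation showing $J_{\rho_1}^{-1}\mathrm{d}\rho_1 V_0\,\mathrm{d}\rho_1^\star=\nu I$ combined with the invariance of boundedness and accretivity under the Piola conjugation (the paper phrases this as an application of \cref{lemma:equivalence_VA_and_V_0A_0} with $\rho_0^{-1}$ replaced by $\rho_1$), and the second part is the content of \cref{prop:chain_rule_scalar,prop:chain_rule_vector}, which the paper invokes directly on the field $A_0\nabla_{\mathcal{C}_0}u_0$ while you unwind them through the bilinear form. The only caveat is that your ``bijection of admissible test functions'' is not a bijection of \emph{smooth} test functions (since $\rho_1$ is only $W^{1,1}_{\mathrm{loc}}$), so the density/mollification step built into those appendix propositions is genuinely needed — which you correctly acknowledge.
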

\begin{proof}
  Define the matrix weight $V_1 \coloneqq \frac{1}{J_{\rho_1}}(\rho_1)_* V_0 (\rho_1)^*$ %
  on $[0,\delta)\times M_1$.
    Replacing $\rho_0^{-1}$ by $\rho_1$ in \cref{lemma:equivalence_VA_and_V_0A_0}
  shows that $V_1^{-1/2} A_1 V_1^{-1/2}$ is uniformly bounded and accretive.
  We have
  \begin{equation*}
    V_1 = \frac{1}{\sqrt{\det g_1}}
    \begin{bmatrix}
      1 & 0 \\
      0 & \mathrm{d}\rho
    \end{bmatrix}
    \begin{bmatrix}
      \mu & 0 \\
      0 & W
    \end{bmatrix}
    \begin{bmatrix}
      1 & 0 \\
      0 & \mathrm{d}\rho^\star
    \end{bmatrix}
    =
    \begin{bmatrix}
      \nu & 0 \\
      0 & \nu I
    \end{bmatrix},
  \end{equation*}
  since $J_\rho = \sqrt{\det g_1}$ and $J_\rho^{-1} \D{\rho} W \D{\rho}^\star = \nu I$. %
  Thus $V_1^{-1/2} A_1 V_1^{-1/2} = A_1/\nu$.
  If $\nabla_{C_0} u_0 \in L^2(V_0)$, then $(\rho_1^{-1})^* \nabla_{C_0} u_0 = \nabla_{C_1}(u_0 \circ \rho_1^{-1})$ is in $L^2(T\mathcal{C}_1, \nu I)$.
  Moreover $A_0 \nabla_{C_0} u_0 \in L^2(T\mathcal{C}_0,V_0^{-1})$,
  so the non-smooth Piola transformation in \cref{prop:chain_rule_vector} shows that
  \begin{equation*}
    \mathrm{div}_{\mathcal{C}_1} \frac{(\rho_1)_*}{J_{\rho_1}} \big(  A_0\nabla_{\mathcal{C}_0} u_0 \big)
    = \frac{(\rho_1)_*}{J_{\rho_1}} \big( \mathrm{div}_{\mathcal{C}_0} A_0 \nabla_{\mathcal{C}_0} u_0 \big) = 0
  \end{equation*}
  in $L^2(\mathcal{C}_1, \nu^{-1})$.
  This completes the proof.
\end{proof}
Since $A_1$ is isotropically degenerate, %
we can apply results from \cite[\S 4]{AMR} %
to obtain solvability estimates of BVPs for $\mathrm{div}_{\mathcal{C}_1}A_1\nabla_{\mathcal{C}_1}u_1 = 0$.
One can then translate to matrix-weighted norms on the cylinder $\mathcal{C}_0$ and in $\Omega$
to obtain the corresponding results for our BVPs for matrix-degenerate equations.
To illustrate this, we consider the $L^2$ non-tangential maximal Neumann solvability estimate
\begin{equation}\label{eq:Neumann_solvability_est}
  \lVert \nabla u \rVert_{\mathcal{X}} \lesssim \lVert \partial_{\nu_{A_0}}u_{\rho}\restriction_M \rVert_{L^2(M,\omega_0^{-1})}
\end{equation}
proved in \cite[Theorem 1.4]{AMR}.
In the notation of the present paper, the right-hand side of \eqref{eq:Neumann_solvability_est} is
\begin{equation*}
 \Big( \int_{M_1} \lvert e_0 \cdot A_1 \nabla_{\mathcal{C}_1} u_1 \rvert^2 \frac{1}{\nu} \D{y} \Big)^{1/2}
\end{equation*}
where $\nabla_{\mathcal{C}_1} u_1$ is the full gradient of $u_1$ as defined in \eqref{eq:full_gradient_and_div}.
Note that
\begin{align}%
  \nabla_{\mathcal{C}_1} u_1 & = (\rho_1^*)^{-1} \nabla_{\mathcal{C}_0} u_0 , \nonumber \\
  \frac1{\nu} \mathrm{d}y & = \Big( \frac{J_{\rho}}{\mu} \Big) (J_{\rho} \mathrm{d}x) = \frac{J_{\rho}^2}{\mu} \mathrm{d}x . \label{eq:1_over_nu_change_variables}
\end{align}
Since $ A_1  = J_{\rho_1}^{-1}(\rho_1)_* \,A_0\, \rho_1^* $,
we get                          %
\begin{equation*}
  e_0 \cdot A_1 \nabla_{\mathcal{C}_1} u_1 = J_{\rho_1}^{-1} (\rho_1^* e_0) \cdot A_0 \nabla_{\mathcal{C}_0} u_0 = J_{\rho_1}^{-1} e_0 \cdot A_0 \nabla_{\mathcal{C}_0} u_0 
\end{equation*}
and since $J_{\rho_1}\restriction_{M_0} = J_{\rho}$, by using \eqref{eq:1_over_nu_change_variables} we have
\begin{equation*}
  \int_{M_1} \lvert e_0 \cdot A_1 \nabla_{\mathcal{C}_1} u_1 \rvert^2 \frac{1}{\nu} \D{y} = \int_{M_0} \lvert e_0 \cdot A_0 \nabla_{\mathcal{C}_0} u_0 \rvert^2 \frac{1}{\mu} \D{x}.
\end{equation*}
As for the left-hand side in \eqref{eq:Neumann_solvability_est},
translating the Banach norm in \cite[eq. (4.13)]{AMR} to our present notation gives
\begin{equation*}
 \lVert \nabla u \rVert_{\mathcal{X}}^2 = \int_{M_1} \lvert \widetilde{N}_*(\eta \nabla_{\mathcal{C}_1} u_1)\rvert^2 \nu \mathrm{d}y + \int_{\Omega}\langle V \nabla u,\nabla u\rangle(1-\eta)^2 \mathrm{d}z ,
\end{equation*}
where $\eta(t)$ is a smooth cut-off towards the top of the cylinder,
for example $\eta(t) = \max\{0, \min(1, 2 - 2t/\delta)\}$.
Note that in the second term, with abuse of notation, we denoted again by $\eta$ the pullback $\eta\circ \rho_0^{-1}$ on $\Omega$.
We recall the definition of the modified non-tangential maximal function $\widetilde{N}_*$ used on the cylinder $[0,\delta)\times M_1$.
\begin{define}[Modified non-tangential maximal function]
  Let $c_0 > 1$, $c_1 >0$ be fixed constants.
  For a point $(t,y) \in [0,\delta)\times M_1$, we define the Whitney region
  \begin{equation*}
    W_1(t,y) \coloneqq (t/c_0 , c_0 t) \times B_1(y,c_1 t)
  \end{equation*}
  where $B_1$ denotes the geodesic ball of $M_1$ with respect to the metric $\mathrm{dist}_1$.
  Then the non-tangential maximal function at a point $y_1 \in M_1$ is
  \begin{equation*}
    \widetilde{N}_* f(y_1) \coloneqq \sup_{t \in (0,c_0 \delta)} \left( \frac{1}{\nu\big(W_1(t,y_1)\big)} \iint_{W_1(t,y_1)} \lvert f(s,y) \rvert^2 \nu(y) \D{s}\D{y} \right)^{1/2},
  \end{equation*}
  where the measure $\nu\big(W_1(t,y_1)\big)$ is taken with respect to the weighted measure $\nu \D{s}\D{y}$
  and equals $t(c_0 -c_0^{-1})\,\nu(B_1)$.
\end{define}
Consider on $M_0$ the distance $\mathrm{dist}_0(x,\xi) \coloneqq \mathrm{dist}_{1}(\rho(x),\rho(\xi))$
which is the geodesic distance on $M_1$ pulled back to $M_0$.
The Whitney regions on $[0,\delta)\times M_0$ %
  are
\begin{equation*}
  W_0(t,x) %
  \coloneqq (t/c_0, c_0 t) \times \{ \xi \in M_0 \,\colon\, \mathrm{dist}_0(x,\xi) < c_1 t \}.
\end{equation*}
Changing variables with $y_1 = \rho(x_1)$,
since $W_0(t,x_1) = \rho_1\big(W_1(t,y_1)\big)$,
we get
\begin{equation}\label{eq:def_mu_measure_of_W_0}
  \iint_{W_1(t,y_1)} \nu(y) \D{s} \D{y} = \iint_{W_0(t,x_1)} \mu(x) \D{s} \D{x} \eqqcolon \mu\big(W_0(t,x_1)\big)
\end{equation}
Changing variables using $\rho_1$
and the expression of the metric $\widetilde{g}_1$ in \eqref{eq:extension_of_metric_to_cylinder},
we also get
\begin{align*}
  \iint_{W_1(t,y_1)} & \lvert \eta(s) \nabla_{\mathcal{C}_1} u_1\rvert^2 \nu(y) \D{s}\D{y} \\
                  & = \iint_{W_0(t,x_1)} \eta(s)^2 \langle \mathrm{d}\rho_1^{-1} (\mathrm{d}\rho_1^{-1})^\star \nabla_{\mathcal{C}_0}(\rho_1^*u_1), \nabla_{\mathcal{C}_0}(\rho_1^*u_1) \rangle \mu(x) \D{s} \D{x} \\
                  &= \iint_{W_0(t,x_1)} \eta(s)^2 \langle
                    \begin{bsmallmatrix}
                      \mu & 0 \\
                      0 & W
                    \end{bsmallmatrix}
                    \nabla_{\mathcal{C}_0} u_0 , \nabla_{\mathcal{C}_0} u_0 \rangle \D{s} \D{x}
\end{align*}
since $\mathrm{d}\rho_1^{-1} (\mathrm{d}\rho_1^{-1})^\star = \widetilde{g}_1^{-1}$ and $\rho_1^*u_1 = u_0$.
We also have
\begin{equation*}
  \int_{M_1} \lvert \widetilde{N}_*(\eta \nabla_{\mathcal{C}_1} u_1)\rvert^2 \nu(y) \D{y} = \int_{M_0} \lvert \widetilde{N}_0(\eta \nabla_{\mathcal{C}_0} u_0)\rvert^2 \mu(x) \D{x},
\end{equation*}
where the new modified non-tangential maximal function is
\begin{equation*}
  \widetilde{N}_0f(x_1) \coloneqq \sup_{t \in (0,c_0\delta)} \left( \frac{1}{\mu\big(W_0(t,x_1)\big)} \iint_{W_0(t,x_1)} \langle 
  \begin{bsmallmatrix}
    \mu & 0 \\
    0 & W
  \end{bsmallmatrix}
  f(s,x),f(s,x)\rangle\D{s}\D{x} \right)^{1/2} ,
\end{equation*}
and $\mu\big(W_0(t,x_1)\big)$ is as in \eqref{eq:def_mu_measure_of_W_0}.
\begin{figure}[tbh]
    \centering
    \includesvg[scale=1.0]{figures/approach_regions.svg}
    \caption{Non-tangential approach regions. On the left the $\mu,W$-adapted approach regions: in the first $\mu W^{-1} \to \infty$ at $M_0$, in the second region $\mu W^{-1} \to 0$.
      On the right-hand side, the corresponding non-tangential conical approach regions to $M_1$.}\label{fig:approach_regions}
\end{figure}

Note that the approach regions for $\widetilde{N}_0$ shown in \cref{fig:approach_regions} left are intimately connected to the failure
of standard off-diagonal estimates for the resolvent of the operator $D B$ from \cref{lemma:equivalence_divform_andCR}.
On the other hand, such off-diagonal estimates do hold for the corresponding operator
associated to $\mathrm{div}_{\mathcal{C}_1}A_1\nabla_{\mathcal{C}_1} u_1 = 0$, from \cite[Proposition 4.2]{AMR}.
And indeed on $M_1$ we have standard non-tangential approach regions on the right in \cref{fig:approach_regions},
and in \cite[Theorem 1.4]{AMR}.

For our solvability result,
we also need the analogue of the \emph{Carleson discrepancy} $\lVert \,\cdot\, \rVert_*$ from \cite[Eq. (4.10)]{AMR}
for a multiplier $\mathcal{E}$ on the cylinder $[0,\delta)\times M_0$ with
Whitney regions $W_0$ and balls $B_0 \subseteq M_0$ taken with respect to the distance $\mathrm{dist}_0(\cdot,\cdot)$.
The quantity $\lVert \mathcal{E} \rVert_*^2$ is given by
\begin{align*}
  \sup_{\substack{\zeta \in M_0\\r <\delta}} \iint_{\left\{\substack{0 < t < r \\ x \in B_0(\zeta,r)}\right\}}\Big(\sup_{(s,\xi)\in W_0(t,x)}\big\lvert V_0(\xi)^{-1/2} \mathcal{E}(s,\xi) V_0(\xi)^{-1/2}\big\rvert\Big)^2 \frac{\D{t}}t \frac{\mu(x) \D{x}}{\mu( B_0(\zeta,r) )} , %
\end{align*}
where $\mu( B_0(\zeta,r) ) = \int_{B_0(\zeta,r)} \mu(x)\D{x}$.

Summarising, we have obtained the following solvability result for the Neumann BVP for anisotropically degenerate divergence form equations \eqref{eq:div_formA_0}.
\begin{theorem}
  Let $\Omega$ be a compact manifold with Lipschitz boundary $\partial\Omega$,
  and let $A$ be a matrix-valued function on $\Omega$ %
  whose degeneracy are described by a matrix-weight $V$, as in one of the conditions of \cref{lemma:V-accretivity}.
  Let $\rho_0$ be the bi-Lipschitz map defined in \eqref{eq:rho_0}.
  Let $A_0 = A_0(t,x)$ and $V_0(t,x)$ be the matrices transformed via $\rho_0$ as in \eqref{eq:A_0_coefficients}
  and assume that $V_0 = \big[
  \begin{smallmatrix}
    \mu & 0 \\
    0 & W
  \end{smallmatrix}\big]
  $
  for a scalar weight $\mu$ and a matrix weight $W$, as in \eqref{eq:structure_of_V_0}.
  Assume that the matrix $A_0(t,x)$ has trace
    \begin{equation*}
      \underline{A}_0(x) \coloneqq A_0(0,x) = \lim_{t\to 0}A_0(t,x).
    \end{equation*}
  Assume the existence of a smooth, closed Riemannian manifold $(M_1,g_1)$
  and a $W^{1,1}_{\mathrm{loc}}$-homeomorphism $ \rho \colon M_0 \to M_1 $ between the manifolds at the base of the cylinders,
  as in \cref{fig:cylinders}.
  We assume also that the scalar weight $\nu$ in \eqref{eq:define_nu} %
  is a Muckenhoupt weight in $A_2(M_1)$. %

  Then there exists $\varepsilon > 0$ depending only on: $[\nu]_{A_2(M_1)}$, $\lVert V_0^{-1/2} A_0 V_0^{-1/2}\rVert_{L^\infty}$
  the accretivity constant of $V_0^{-1/2} A_0 V_0^{-1/2}$,
  and the structural geometric constants of $M_1$:
  dimension, injectivity radius and lower bound on the Ricci curvature;
  such that if
    \begin{enumerate}
    \item the Carleson discrepancy $\lVert A_0 - \underline{A}_0\rVert_* < \varepsilon$,
    \item the trace $\underline{A}_0$ is close to its adjoint as operator on $L^2(\mathcal{C}_0,V_0)$, namely
      \begin{equation*}
        \sup_{x\in M_0} \big\lvert V_0(x)^{-1/2} \big(\underline{A}_0^\star(x) - \underline{A}_0(x)\big)V_0(x)^{-1/2} \big\rvert < \varepsilon 
      \end{equation*}
    \end{enumerate}
  then the Neumann solvability estimate
  \begin{equation*}
    \int_{M_0}\lvert \widetilde{N}_0(\eta \nabla_{\mathcal{C}_0} u_0)\rvert^2 \mu \D{x} + \int_{\Omega} \langle V \nabla u,\nabla u\rangle (1-\eta)^2\D{z}
    \lesssim \int_{M_0} \lvert \partial_{\nu_{A_0}} u_0 \rvert^2 \frac{1}{\mu} \D{x}
  \end{equation*}
  holds for all weak solutions $u$ to $\mathrm{div}A\nabla u = 0$ %
  in $\Omega$, with near boundary values $u_0$ of $u$, in $\mathcal{C}_0$, as above.

\end{theorem}
\begin{proof}
  Apply \cite[Theorem 1.4]{AMR} to the isotropically degenerate equation \eqref{eq:isotr_degen_on_N_1}  on $[0,\delta)\times M_1$
  (see \cref{fig:cylinders}). Translation of this result to the anisotropically degenerate equation $\mathrm{div}A\nabla u = 0$
  in $\Omega$ (and the Lipschitz equivalent equation $\mathrm{div}_{\mathcal{C}_0}A_0\nabla_{\mathcal{C}_0}u_0 = 0$ on the cylinder $[0,\delta)\times M_0$, near $\partial\Omega$)
  gives the stated result.
  We have seen above the translation of the solvability estimate.
  The translation of the Carleson discrepancy and almost self-adjointness hypothesis is done similarly
  using \cref{lemma:equivalence_VA_and_V_0A_0} with $A,A_0$ replaced by $A_1, A_0$
  and a change of variables in the integrals. 
\end{proof}

The solvability estimates for the $L^2$ Dirichlet and Dirichlet regularity BVPs from \cite[Theorem 1.4]{AMR}
and the Atiyah--Patodi--Singer BVPs from \cite[Theorems 4.5,4.6]{AMR} can similarly be extended to anisotropically degenerate equations.
We leave the details to the interested reader.

\appendix

\section{\texorpdfstring{$W^{1,1}$}{Sobolev} pullbacks and Piola transformations}

We generalise the commutation theorem \cite[Theorem 7.2.9, Lemma 10.2.4]{RosenGMA}
for external derivatives and pullbacks to $W^{1,1}_{\mathrm{loc}}$ homeomorphisms and weighted $L^2$ fields.
(We only deal with the scalar and vector case which we need, and only on $\mathbb{R}^d$).
Throughout this section,
$\rho \colon \mathbb{R}^d \to \mathbb{R}^d$ is assumed to be a $W^{1,1}_{\mathrm{loc}}$ homeomorphism,
meaning that $\rho$,$\rho^{-1}$ are continuous with the weak Jacobian matrices %
$\mathrm{d}\rho$,$\mathrm{d}\rho^{-1}$ in $L^1_{\mathrm{loc}}$.

\begin{theorem}[Change of variables]\label{thm:Sobolev_change_of_variables}
  If $\rho$ is a $W^{1,1}_{\mathrm{loc}}$ homeomorphism and let $\Omega \subseteq \mathbb{R}^d$ be an open set, then
  \begin{equation*}
    \int_{\Omega} f(\rho(x)) J_\rho(x) \D{x} = \int_{\rho(\Omega)} f(y) \D{y}
  \end{equation*}
  holds for all integrable, compactly supported functions $f$.
\end{theorem}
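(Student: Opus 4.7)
The plan is to reduce to the area formula for Sobolev homeomorphisms via Lipschitz approximation and then pass to general integrands. By splitting $f$ into real, imaginary, positive and negative parts, and then by monotone convergence, it suffices to prove
\begin{equation*}
  \int_{\rho^{-1}(E)} J_\rho(x) \D{x} = \Lebesgue^d(E)
\end{equation*}
for every Borel set $E \Subset \rho(\Omega)$.

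For the forward inequality $(\le)$ I would apply the classical area formula for Sobolev maps. Since $\rho \in W^{1,1}_{\mathrm{loc}}$ it is approximately differentiable almost everywhere (Stepanov--Calderón), so a Whitney-type Lipschitz approximation yields a decomposition $\Omega = N \cup \bigcup_{k \ge 1} A_k$ with $\Lebesgue^d(N)=0$ and with $\rho\restriction_{A_k}$ coinciding with (the restriction of) a Lipschitz map. The classical area formula applies on each $A_k$, and because $\rho$ is a homeomorphism the multiplicity is at most one, so summation gives
\begin{equation*}
  \int_{\rho^{-1}(E) \setminus N} J_\rho(x)\D{x} = \Lebesgue^d\bigl(\rho(\rho^{-1}(E)\setminus N)\bigr) \le \Lebesgue^d(E).
\end{equation*}

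The main obstacle is the reverse inequality, which boils down to the Lusin $(N)$ property $\Lebesgue^d(N)=0 \Rightarrow \Lebesgue^d(\rho(N))=0$; without it, the area formula equality can fail (Ponomarev's example). Here I would exploit the standing hypothesis that $\rho^{-1}$ is also $W^{1,1}_{\mathrm{loc}}$. Applying the same Whitney-plus-area-formula argument with $\rho$ replaced by $\rho^{-1}$ gives
\begin{equation*}
  \int_{\rho(N)} J_{\rho^{-1}}(y)\D{y} \le \Lebesgue^d(N) = 0.
\end{equation*}
On the other hand, the a.e.\ chain rule $J_{\rho^{-1}}(\rho(x))\, J_\rho(x) = 1$, valid at points where both maps are approximately differentiable (a set of full measure on which both Jacobians are finite and nonzero), forces $J_{\rho^{-1}} > 0$ almost everywhere on $\rho(\Omega)$. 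Combining these two facts forces $\Lebesgue^d(\rho(N))=0$, which upgrades the forward inequality to equality. Undoing the reductions then yields the change of variables formula for arbitrary integrable, compactly supported $f$.
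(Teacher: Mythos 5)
Your first half is sound and is in fact exactly how the paper handles this statement: the paper does not prove it but quotes it from Hajłasz \cite{zbMATH00720741}, whose Theorem~2 is precisely the Whitney-type decomposition $\Omega = N \cup \bigcup_k A_k$ with $|N|=0$, $\rho|_{A_k}$ Lipschitz, plus the classical area formula on each piece and injectivity to kill the multiplicity. This gives, as you say, $\int_{\rho^{-1}(E)\setminus N} J_\rho \,\mathrm{d}x = \mathcal{L}^d\big(\rho(\rho^{-1}(E)\setminus N)\big) \le \mathcal{L}^d(E)$, i.e.\ the formula with the exceptional null set removed.

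The gap is in your upgrade to equality, i.e.\ in deducing Lusin's condition (N) for $\rho$ from $\rho^{-1}\in W^{1,1}_{\mathrm{loc}}$. The identity $J_{\rho^{-1}}(\rho(x))\,J_\rho(x)=1$ a.e.\ is not justified: to even evaluate $J_{\rho^{-1}}$ at $\rho(x)$ for a.e.\ $x$ you need $\rho^{-1}(Z')$ to be null, where $Z'$ is the null set on which $\rho^{-1}$ fails to be approximately differentiable --- and that is a Lusin-type property of exactly the kind you are trying to establish, so the step is circular. Moreover, even at points where both maps are approximately differentiable, the chain rule for $\rho^{-1}\circ\rho=\mathrm{id}$ breaks down when $J_\rho(x)=0$: approximate differentiability controls $\rho$ only on a set of density one, whose image may be crushed into a set too thin at $\rho(x)$ for the behaviour of $\rho^{-1}$ to matter. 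This is not a repairable technicality at the stated level of generality: for $d\ge 3$ there exist homeomorphisms with $\rho,\rho^{-1}\in W^{1,1}$ and $J_\rho=0=J_{\rho^{-1}}$ almost everywhere (bi-Sobolev homeomorphisms with zero Jacobian, D'Onofrio--Hencl--Schiattarella), and such maps fail condition (N); so your intermediate claims that both Jacobians are nonzero a.e., and in particular that $J_{\rho^{-1}}>0$ a.e., are false in general, not merely unproved. To close the argument one needs extra input --- e.g.\ a.e.\ invertibility of $\mathrm{d}\rho$ (which does hold in the paper's applications, where $\mathrm{d}\rho^\star\mathrm{d}\rho=\mu W^{-1}$ is a.e.\ positive definite) together with a genuine proof of (N), or else one must keep the exceptional null set in the statement, which is how the result is formulated in Theorem~2 and \S 3 of the cited work.
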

See \cite[Theorem 2 and \S 3]{zbMATH00720741} for a proof.

For $f \in C^\infty_c(\mathbb{R}^d)$ and $h \in C^{\infty}_c(\mathbb{R}^d;\mathbb{R}^d)$,
the chain rule in the weak sense reads
\begin{equation}\label{eq:chain_scalar}
  - \int f(\rho(x)) \mathrm{div}h(x) \D{x} = \int (\mathrm{d}\rho_x)^\star (\nabla f)(\rho(x)) h(x) \D{x} .
\end{equation}
This holds for $W^{1,1}_{\mathrm{loc}}$ homeomorphism $\rho$,
as readily seen by mollifying $\rho$ and passing to the limit.
We first extend to non-smooth $f$:
\begin{theorem}[Non-smooth chain rule]\label{prop:chain_rule_scalar}
  Assume $v,V \in A_2^{\mathrm{loc}}$ and $f \in L^2(v)$ is compactly supported with weak gradient $\nabla f \in L^2(V)$.
  Let $\rho$ be a $W^{1,1}_{\mathrm{loc}}$ homeomorphism.
  Define the weights
  \begin{equation*}
    v_\rho(x) \coloneqq J_\rho(x) v(\rho(x)), \quad V_\rho(x) \coloneqq J_\rho(x) \mathrm{d}\rho_x^{-1} V(\rho(x)) (\mathrm{d}\rho_x)^\star)^{-1}
  \end{equation*}
  and assume $v_\rho,V_\rho \in A_2^{\mathrm{loc}}$.
  Then $\rho^*f = f \circ \rho \in L^2(v_\rho)$ has weak gradient
  \begin{equation*}
    \nabla(\rho^* f) = \rho^* \nabla f = \mathrm{d}\rho^\star(\nabla f \circ \rho) \in L^2(V_\rho).
  \end{equation*}
\end{theorem}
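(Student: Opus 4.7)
My plan is to reduce to the smooth case by density, using change-of-variables isometries to transfer estimates between the domain and target.

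First, I would verify that the weights $v_\rho$ and $V_\rho$ are designed precisely so that pullback is an isometry. By \cref{thm:Sobolev_change_of_variables},
\begin{equation*}
  \int |\rho^* f|^2 v_\rho \mathrm{d}x = \int |f(\rho)|^2 J_\rho\, v(\rho) \mathrm{d}x = \int |f|^2 v \mathrm{d}y ,
\end{equation*}
so $\rho^* \colon L^2(v) \to L^2(v_\rho)$ is an isometry. For the vector case, the cancellation $(\mathrm{d}\rho^\star)^{-1}\mathrm{d}\rho^\star = I$ built into $V_\rho$ gives
\begin{equation*}
  \int \langle V_\rho \mathrm{d}\rho^\star(\nabla f \circ \rho), \mathrm{d}\rho^\star(\nabla f \circ \rho)\rangle \mathrm{d}x = \int J_\rho \langle V(\rho)\nabla f(\rho), \nabla f(\rho)\rangle \mathrm{d}x = \int \langle V \nabla f, \nabla f\rangle \mathrm{d}y ,
\end{equation*}
so $g \mapsto \mathrm{d}\rho^\star(g \circ \rho)$ is an isometry from $L^2(V)$ to $L^2(V_\rho)$. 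In particular $\rho^* f \in L^2(v_\rho)$ and $\mathrm{d}\rho^\star(\nabla f \circ \rho) \in L^2(V_\rho)$, and it remains to establish the weak identity $\nabla(\rho^* f) = \mathrm{d}\rho^\star(\nabla f \circ \rho)$.

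For smooth $f$ this identity is precisely \eqref{eq:chain_scalar}, which the excerpt justifies by mollifying $\rho$. To extend to the given $f$, I would approximate by $f_n \coloneqq f * \phi_{1/n}$ for a standard mollifier. Since $v, V \in A_2^{\mathrm{loc}}$ imply $v^{-1}, V^{-1} \in L^1_{\mathrm{loc}}$, Cauchy--Schwarz yields $f, \nabla f \in L^1_{\mathrm{loc}}$, so $f_n \in C^\infty_c$ with $\nabla f_n = (\nabla f) * \phi_{1/n}$, all supported in a fixed compact neighbourhood $K'$ of $\supp f$. Because $v \in A_2^{\mathrm{loc}}$ provides a local bound on the Hardy--Littlewood maximal function in $L^2(v|_{K'})$ and $|f_n| \le Mf$ pointwise, dominated convergence combined with pointwise a.e.\ convergence $f_n \to f$ gives $f_n \to f$ in $L^2(v)$; the same argument with $V$ gives $\nabla f_n \to \nabla f$ in $L^2(V)$.

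Finally, the isometries from the first step yield $\rho^* f_n \to \rho^* f$ in $L^2(v_\rho)$ and $\mathrm{d}\rho^\star(\nabla f_n \circ \rho) \to \mathrm{d}\rho^\star(\nabla f \circ \rho)$ in $L^2(V_\rho)$, hence both convergences also hold in $L^1_{\mathrm{loc}}$ since $v_\rho, V_\rho \in A_2^{\mathrm{loc}}$. Pairing \eqref{eq:chain_scalar} applied to each $f_n$ with any test field $h \in C^\infty_c$ and passing to the limit then produces the desired weak identity for $f$. The main obstacle is the approximation step, because the weights $v$ and $V$ are a priori unrelated and standard density results in weighted Sobolev spaces are usually phrased for a single Muckenhoupt weight; what saves the argument is the commutation $\nabla(f * \phi) = (\nabla f) * \phi$, which decouples the control of $f_n \to f$ in $L^2(v)$ from that of $\nabla f_n \to \nabla f$ in $L^2(V)$, so that $A_2^{\mathrm{loc}}$ applied separately to each weight suffices.
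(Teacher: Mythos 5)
Your overall route is the same as the paper's: establish that $\rho^*\colon L^2(v)\to L^2(v_\rho)$ and $g\mapsto \mathrm{d}\rho^\star(g\circ\rho)\colon L^2(V)\to L^2(V_\rho)$ are isometries via \cref{thm:Sobolev_change_of_variables}, mollify $f$ using $\nabla(f*\phi_{1/n})=(\nabla f)*\phi_{1/n}$, and pass to the limit in \eqref{eq:chain_scalar} against a fixed test field $h$, using the local integrability of $v_\rho^{-1}$ and $V_\rho^{-1}$ coming from $A_2^{\mathrm{loc}}$. The genuine gap is in the approximation step for the \emph{matrix} weight. For the scalar weight $v$ your argument is fine: $|f_n|\le Mf$ pointwise, $Mf\in L^2(v)$ locally by Muckenhoupt's theorem, and dominated convergence applies. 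But ``the same argument with $V$'' does not go through. To dominate $\lvert V^{1/2}(x)\,\nabla f_n(x)\rvert$ you would use $\lvert V^{1/2}(x)\rvert\, M(\nabla f)(x)$, and this majorant need not be in $L^2_{\mathrm{loc}}$: the scalar maximal function $M(\nabla f)$ forgets the directions of $\nabla f$, whereas the matrix weight weighs directions differently. If $V$ has a large eigenvalue in a direction in which $\nabla f$ is small, then $V^{1/2}\nabla f$ can be square integrable while $\lvert V^{1/2}\rvert\, M(\nabla f)$ is not, so $\nabla f\in L^2(V)$ and $V\in A_2^{\mathrm{loc}}$ do not justify dominated convergence this way. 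The missing ingredient is the matrix-weighted maximal operator of Christ--Goldberg,
\begin{equation*}
  M_V h(x) \coloneqq \sup_{B\ni x}\frac{1}{\lvert B\rvert}\int_B \lvert V^{1/2}(x)V^{-1/2}(y)h(y)\rvert \,\mathrm{d}y ,
\end{equation*}
whose $L^2$-boundedness for $V\in A_2$ (applied locally to $h=V^{1/2}\nabla f$) gives the required $L^2$ majorant for $V^{1/2}(\phi_{1/n}*\nabla f)$ and hence $\nabla f_n\to\nabla f$ in $L^2(V)$. This is exactly what the paper invokes, citing \cite{ChristGoldberg01} and \cite[Theorem 3.2]{Goldberg03}.

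Note also that you cannot sidestep this by settling for unweighted $L^1_{\mathrm{loc}}$ convergence of $\nabla f_n$ (which is free from $\nabla f\in L^1_{\mathrm{loc}}$): since $\rho$ is only a $W^{1,1}_{\mathrm{loc}}$ homeomorphism, transferring the convergence to the $x$-side against $h$ requires Cauchy--Schwarz with $V_\rho^{\pm1/2}$, i.e.\ control of $\nabla f_n-\nabla f$ in $L^2(V)$; even uniform boundedness of $\lVert\phi_{1/n}*\nabla f\rVert_{L^2(V)}$ is a matrix-weighted averaging bound of the same Christ--Goldberg nature. So the gap is real, but it is repaired precisely by replacing the scalar Hardy--Littlewood bound with the Christ--Goldberg maximal function, as in the paper's proof; the rest of your argument then stands.
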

\begin{proof}
  Mollify $f_t \coloneqq \eta_t * f$,
  so that $\nabla f_t = \eta_t * \nabla f$.
  It follows that $f_t \to f$ in $L^2(v)$ and $\nabla f_t \to \nabla f$ in $L^2(V)$
  using dominated convergence and bounds for
  the vector Hardy--Littlewood maximal operator introduced by Christ and Goldberg \cite{ChristGoldberg01},
  see \cite[Theorem 3.2]{Goldberg03} and \cref{appx:approximation_in_weighted_Sobolev}.
  Note that $\lVert \rho^* f\rVert_{L^2(v_\rho)} = \lVert f \rVert_{L^2(v)}$ and
  $\lVert \rho^*(\nabla f)\rVert_{L^2(V_\rho)} = \lVert \nabla f \rVert_{L^2(V)}$.
  Apply the chain rule \eqref{eq:chain_scalar} to $f_t$ and $\rho$ for a fixed test function $h$.
  We can pass to the limit in $t$ and conclude since
  the left-hand side of \eqref{eq:chain_scalar} is bounded as
  \begin{align*}
    \int \lvert f_t(\rho(x)) - f(\rho(x)) \rvert v_\rho(x) \D{x} & \lesssim \Big( \int \lvert f_t(\rho(x)) - f(\rho(x)) \rvert^2 v_\rho(x) \D{x} \Big)^{1/2}  \\
    & = \Big( \int \lvert f_t(y) - f(y) \rvert^2 v(y) \D{y}\Big)^{1/2} \to 0
  \end{align*}
  where the first integral is on the compact support of $h$
  and we used \cref{thm:Sobolev_change_of_variables} when changing variables.
  For the right-hand side in \eqref{eq:chain_scalar},
  using that $\lvert V_\rho^{-1} \rvert \in L^1_{\mathrm{loc}}$, we bound
  \begin{align*}
    \int \lvert \langle V_\rho^{1/2} \big( \rho^*(\nabla f_t) - \rho^*(\nabla f) \big) , V_\rho^{-1/2} h \rangle\rvert \D{x} & \lesssim \Big( \int \lvert V_\rho^{1/2} \big( \rho^*(\nabla f_t) - \rho^*(\nabla f) \big) \rvert^2 \D{x} \Big)^{1/2} \\
    & = \Big( \int \lvert V^{1/2} ( \nabla f_t - \nabla f ) \rvert^2 \D{y} \Big)^{1/2} \to 0 .
  \end{align*}
  This concludes the proof.
\end{proof}

Changing variables in \eqref{eq:chain_scalar} gives
\begin{equation}\label{eq:dual_chain_rule}
  - \int f(y) \frac{1}{J_\rho(\rho^{-1}(y))} ( \mathrm{div} h )(\rho^{-1}(y)) \D{y} = \int \nabla f(y) \cdot \Big( \frac{1}{J_\rho} \mathrm{d}\rho \, h \Big)(\rho^{-1}(y)) \D{y}.
\end{equation}
We refer to the transformation applied to $h$ on the right-hand side of \eqref{eq:dual_chain_rule}
as the Piola transformation $J_\rho^{-1} \rho_*$,
where $\rho_*$ denotes the pushforward via $\rho$.
This transformation is the adjoint of the pullback $\rho^*$ with respect to the unweighted $L^2$ pairing.

We extend identity \eqref{eq:dual_chain_rule} to non-smooth vector fields $h$.
\begin{theorem}[Non-smooth Piola transformation]\label{prop:chain_rule_vector}
  Assume that $v,V \in A_2^{\mathrm{loc}}$ and $h \in L^2(\mathbb{R}^d;\mathbb{R}^d, V)$ is compactly supported with
  weak divergence $\mathrm{div}h \in L^2(\mathbb{R}^d,v)$.
  Let $\rho$ be a $W^{1,1}_{\mathrm{loc}}$ homeomorphism.
  Define the weights
  \begin{equation*}
    v^\rho(y) \coloneqq J_\rho(\rho^{-1}(y)) v(\rho^{-1}(y)), \quad V^\rho(y) \coloneqq \big( J_\rho (\mathrm{d}\rho^\star)^{-1} V \mathrm{d}\rho^{-1} \big) \circ \rho^{-1}(y)
  \end{equation*}
  and assume $v^\rho,V^\rho \in A_2^{\mathrm{loc}}$.
  Then $J_\rho^{-1} \rho_* h = \big(\frac{1}{J_\rho} \mathrm{d}\rho h \big) \circ \rho^{-1} \in L^2(V^\rho)$
  and has weak divergence
  \begin{equation*}
    \mathrm{div}(J_\rho^{-1} \rho_* h) = \Big(\frac{1}{J_\rho} \mathrm{div} h \Big) \circ \rho^{-1} \in L^2(v^\rho).
  \end{equation*}
\end{theorem}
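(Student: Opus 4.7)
The plan is to mimic the proof of \cref{prop:chain_rule_scalar}: mollify $h$, apply the smooth Piola identity~\eqref{eq:dual_chain_rule} to the mollifications, and pass to the limit against a fixed test function $f \in C^\infty_c(\mathbb{R}^d)$.

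First, I would record that the Piola transformation is an isometry between the relevant weighted spaces. Using \cref{thm:Sobolev_change_of_variables} together with the definitions of $v^\rho$ and $V^\rho$, a direct computation yields
\begin{equation*}
  \lVert J_\rho^{-1}\rho_* h \rVert_{L^2(V^\rho)} = \lVert h \rVert_{L^2(V)} \quad \text{and} \quad \lVert (J_\rho^{-1}\mathrm{div}h) \circ \rho^{-1} \rVert_{L^2(v^\rho)} = \lVert \mathrm{div}h \rVert_{L^2(v)}.
\end{equation*}
The vector identity hinges on the matrix cancellation $V^\rho(\rho(x)) \cdot J_\rho^{-1}(x)\mathrm{d}\rho_x = (\mathrm{d}\rho_x^\star)^{-1} V(x)$, which after a further $\mathrm{d}\rho_x^{-1}\mathrm{d}\rho_x = I$ in the inner product yields the integrand $J_\rho^{-1}(x)\langle V(x) h(x), h(x) \rangle$; the Jacobian factor from $\D{y} = J_\rho(x)\D{x}$ then closes the calculation.

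Next, I would set $h_t \coloneqq \eta_t * h$, which is smooth, compactly supported, and satisfies $\mathrm{div}h_t = \eta_t * \mathrm{div}h$. By boundedness of the Christ--Goldberg vector Hardy--Littlewood maximal operator on $L^2(V)$ (which applies here because $V \in A_2^{\mathrm{loc}}$ and the mollifications remain in a fixed compact set), dominated convergence gives $h_t \to h$ in $L^2(V)$ and $\mathrm{div}h_t \to \mathrm{div}h$ in $L^2(v)$. For each fixed $f \in C^\infty_c$, apply \eqref{eq:dual_chain_rule} to the smooth pair $(f, h_t)$.

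Finally, I would pass to the limit $t \to 0$ on both sides of the resulting identity. Using Cauchy--Schwarz together with the local $A_2^{\mathrm{loc}}$ assumptions on $v^\rho$ and $V^\rho$ (which give $(v^\rho)^{-1}, \lvert (V^\rho)^{-1}\rvert \in L^1_{\mathrm{loc}}$ on the supports of $f$ and $\nabla f$), the error in the left hand side is dominated by $\lVert \mathrm{div}h_t - \mathrm{div}h \rVert_{L^2(v)}$ after change of variables, and the error in the right hand side by $\lVert h_t - h \rVert_{L^2(V)}$; both vanish. The limiting identity, together with the norm equalities recorded above, simultaneously produces the distributional statement $\mathrm{div}(J_\rho^{-1}\rho_* h) = (J_\rho^{-1}\mathrm{div}h) \circ \rho^{-1}$ and the memberships $J_\rho^{-1}\rho_* h \in L^2(V^\rho)$ and $(J_\rho^{-1}\mathrm{div}h) \circ \rho^{-1} \in L^2(v^\rho)$. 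The one nontrivial technical ingredient is the Christ--Goldberg matrix maximal bound powering the weighted mollification; everything else is routine bookkeeping of the change of variables and the algebraic cancellations from the isometry computation.
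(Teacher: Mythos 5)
Your proposal is correct and follows exactly the route the paper intends: the paper's own proof of this theorem is the single remark that one argues as in \cref{prop:chain_rule_scalar}, mollifying $h$ and passing to the limit in \eqref{eq:dual_chain_rule}, which is precisely what you carry out (and your isometry computation and the Cauchy--Schwarz estimates with $(v^\rho)^{-1},\lvert (V^\rho)^{-1}\rvert \in L^1_{\mathrm{loc}}$ correctly mirror the scalar case). No gaps.
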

\begin{proof}
  The proof is analogous to the one of \cref{prop:chain_rule_scalar}.
  We mollify $h_t \coloneqq \eta_t * h$ component-wise 
  so that $h_t \to h$ in $L^2(\mathbb{R}^d;\mathbb{R}^d, V)$ and $\mathrm{div}h_t \to \mathrm{div}h$ in $L^2(\mathbb{R}^d,v)$
  by dominated convergence and bounds for the vector Hardy--Littlewood maximal operator,
  as in the proof of \cref{thm:approx_in_weighted_Sobolev}.
  Apply the chain rule \eqref{eq:dual_chain_rule} to $h_t$ and $\rho$ for a fixed test function $f$.
  We pass to the limit in $t$ and note that,
  since $(v^{\rho})^{-1} \in L^1_{\mathrm{loc}}$,
  the left-hand side of \eqref{eq:dual_chain_rule} is bounded by
  \begin{align*}
    \Big( \int \Big\lvert \frac{1}{J_\rho(\rho^{-1}(y))} & ( \mathrm{div}h_t - \mathrm{div}h )(\rho^{-1}(y)) \Big\rvert^2 v^{\rho}(y) \D{y} \Big)^{1/2} \\
    & = \Big( \int \lvert (\mathrm{div}h_t - \mathrm{div}h)(x) \rvert^2 v(x) \D{x} \Big)^{1/2} \to 0
  \end{align*}
  where the first integral is on the compact support of the test function $f$
  and then used \cref{thm:Sobolev_change_of_variables} 
  and $\lVert J_{\rho}^{-1} \rho_* \big(\mathrm{div}h\big) \rVert_{L^2(v^{\rho})} = \lVert \mathrm{div}h \rVert_{L^2(v)} $ when changing variables.
  For the right-hand side of \eqref{eq:dual_chain_rule},
  since $\lvert (V^{\rho})^{-1} \rvert \in L^1_{\mathrm{loc}}$,
  we bound
  \begin{align*}
    \int & \Big\lvert (V^{\rho})^{-1/2} \nabla f \cdot (V^{\rho})^{1/2}\Big( \frac{\rho_*}{J_\rho}(h_t) - \frac{\rho_*}{J_\rho}(h) \Big) \Big\rvert \D{y} \\
    & \lesssim \Big(\int \Big\lvert (V^{\rho})^{1/2}\Big(\frac{\rho_*}{J_\rho}(h_t - h)\Big) \Big\rvert^2 \D{y} \Big)^{1/2}
    =  \Big(\int \big\lvert V^{1/2}\big(h_t - h\big) \big\rvert^2 \D{x} \Big)^{1/2} \to 0
  \end{align*}
  where we used that $\lVert J_\rho^{-1} \rho_* h\rVert_{L^2(V^{\rho})} = \lVert h \rVert_{L^2(V)}$.
  This concludes the proof.
\end{proof}

\label{appx:chainrule}

\section{Approximation in weighted Sobolev spaces}

We include a generalisation to our two weights and matrix weight setting
of the classical mollification argument due to Friedrichs. %
Given an open set $\Omega \subseteq \mathbb{R}^d$, let $\mu$ and $W$ be a scalar and matrix weights respectively, both in $A_2^{\mathrm{loc}}(\Omega)$ as in \cref{def:local_A_2}.
Consider the weighted Sobolev space
\begin{equation*}
  \mathcalboondox{H}(\Omega) \coloneqq H^1_{(\mu,W)}(\Omega) \coloneqq \{ u \in L^2(\Omega,\mu) : \text{ such that } \nabla u \in L^2(\Omega;\mathbb{R}^d,W) \}. %
\end{equation*}
The space $\mathcalboondox{H}_{\mathrm{loc}}(\Omega)$ is defined analogously by requiring that
$u$ and the weak gradient $\nabla u$ are in the corresponding spaces $L^2_{\mathrm{loc}}(\Omega,\mu)$ and $L^2_{\mathrm{loc}}(\Omega;\mathbb{R}^d,W)$.

We consider a local version of
the vector Hardy--Littlewood maximal operator $M_W$ introduced by Christ and Goldberg \cite{ChristGoldberg01}:
for $\Omega \subseteq \mathbb{R}^d$ let %
\begin{equation}\label{eq:vector_H-L}
  M_W^{\Omega}(\vec{u})(x) \coloneqq \sup_{\substack{B \ni x \\ B \subset \Omega}}\fint_B \lvert W^{1/2}(x) W^{-1/2}(y) \vec{u}(y) \rvert \D{y} 
\end{equation}
where the supremum is over balls containing $x$, which are contained in $\Omega$.
The operator $M_W$ is bounded from $L^2(\mathbb{R}^d;\mathbb{R}^d)$ to $L^2(\mathbb{R}^d)$ \cite[Theorem 3.2]{Goldberg03},
equivalently,
if $M$ is the Hardy--Littlewood maximal operator, then
$M(\lvert W^{1/2}(x) \,\cdot\,\rvert)$ maps vector-valued functions in $L^2(\mathbb{R}^d;\mathbb{R}^d,W)$ to scalar functions in $L^2(\mathbb{R}^d)$.
In particular, we will use that $M^{\Omega}(\lvert W^{1/2}(x) \,\cdot\,\rvert) \colon L^2(\Omega;\mathbb{R}^d,W) \to L^2(\Omega)$.

\begin{theorem}[Muckenhoupt, Friedrichs]\label{thm:approx_in_weighted_Sobolev}
  Let $\Omega \subseteq \mathbb{R}^d$, and let $\mu$ and $W$ be scalar and matrix weights in $A_2^{\mathrm{loc}}(\Omega)$.
  Then for any $u \in \mathcalboondox{H}_{\mathrm{loc}}(\Omega)$
  there exists a sequence $\{u_n\}_{n \in \mathbb{N}}$ in $C^\infty_c(\mathbb{R}^d)$
  such that
  \begin{align*}                
    u_n \to u \qquad & \text{ in } L^2_{\mathrm{loc}}(\Omega,\mu) \\
    \nabla u_n \restriction_{\omega} \to \nabla u \restriction_{\omega} \quad & \text{ in } L^2(\omega;\mathbb{R}^d,W) \text{ for all } \omega \subset \Omega
  \end{align*}
  where $\omega$ has compact closure inside $\Omega$, and $u \restriction_{\omega}$ is the restriction of $u$ to the set $\omega$.
\end{theorem}

\begin{proof}
  We create the approximating sequence $u_n$ by mollification.
  Let $\mathcalboondox{f}$ be the approximation of the identity
  \begin{equation*}
    \mathcalboondox{f}(x) \coloneqq
    \begin{cases}
      c e^{-1/(1-\lvert x \rvert^2)} & \text{ for } \lvert x \rvert < 1 \\
      0 & \text{ otherwise }
    \end{cases}
  \end{equation*}
  where the constant $c$ is chosen so that $\mathcalboondox{f}$ is normalised in $L^1$.
  Then $\mathcalboondox{f} \in C^\infty_c(B(0,1))$ is radially decreasing on $\mathbb{R}^d$.
  Let $\mathcalboondox{f}_t$ be the $L^1$-rescaling $\mathcalboondox{f}(x/t) 1/t^d$.
  By \cite[Corollary 2.1.12]{GrafakosClassical}  for any locally integrable function $u$ it holds that
  \begin{equation}\label{eq:bound_by_HL}
    \sup_{t > 0} (\mathcalboondox{f}_t * \lvert u \rvert)(x) \le M u(x) \quad \text{ for a.e. } x \in \mathbb{R}^d
  \end{equation}
  where $M$ is the Hardy--Littlewood maximal operator.
  Since $\mathcalboondox{f}$ is an approximation of the identity,
  the sequence $u_t \coloneqq u * \mathcalboondox{f}_t$ converges pointwise almost everywhere to $u$ as $t\to 0$.
  The bound \eqref{eq:bound_by_HL} provides a domination in $L^2(\omega,\mu)$ for any compact subset $\omega \subset \Omega$ and for any weight $\mu \in A_2^{\mathrm{loc}}(\Omega)$,
  so we can conclude via the dominated convergence theorem that
  $u_n \to u$ in  $L^2_{\mathrm{loc}}(\Omega,\mu)$.

  For the convergence of $\nabla u_t$ to $\nabla u$ in $L^2(\Omega;\mathbb{R}^d,W)$,
  we extend the bound in \eqref{eq:bound_by_HL} using the local vector maximal operator in \eqref{eq:vector_H-L}.
  For a vector-valued function $v$, the convolution $\mathcalboondox{f} * v$ is intended component-wise. 
  Notice that by  linearity of the convolution, for any matrix-valued function $A(x)$ we have
  \begin{equation*}
    A(x) (\mathcalboondox{f} * v)(x) = (\mathcalboondox{f} * A(x)v)(x) = \int_{\mathbb{R}^d} \mathcalboondox{f}(x-y) A(x)v(y) \D{y}.
  \end{equation*}
  Moreover, since all norms on a finite dimension vector space are equivalent and $\mathcalboondox{f}$ is non-negative, we have 
  \begin{equation*}
    \lvert (\mathcalboondox{f} * v)(x) \rvert \eqsim \sum_{j=1}^d \lvert (\mathcalboondox{f} * v_j)(x) \rvert \le (\mathcalboondox{f} * \lvert v \rvert)(x).
  \end{equation*}
  Putting these two estimates together, we can apply the bound \eqref{eq:bound_by_HL}
  to obtain
  \begin{align*}
    \lvert \sup_{t>0} A(x) (\mathcalboondox{f}_t * v)(x) \rvert & \le \sup_{t>0} (\mathcalboondox{f}_t * \lvert A(x) v \rvert)(x)  \\
                                                             & \le M(\lvert A(x) v \rvert)(x)
  \end{align*}
  for almost every $x$. The local vector maximal operator $M^{\Omega}(\lvert W^{1/2}(x) \,\cdot\,\rvert)$ is bounded from $L^2(\Omega;\mathbb{R}^d,W)$ to $L^2(\Omega)$.
  We can conclude
  by dominated convergence, which amounts to applying Fatou's lemma to the following non-negative scalar sequence %
  \begin{equation*}
    2^2 \big\lvert M^{\Omega}(\lvert W^{1/2}(x)\nabla u\rvert)(x) \big\rvert^2 - \big\lvert W^{1/2}(x) \big( (\mathcalboondox{f}_n * \nabla u)(x) - \nabla u(x) \big) \big\rvert^2.
  \end{equation*}
  This concludes the proof.
\end{proof}

\label{appx:approximation_in_weighted_Sobolev}

\section*{Acknowledgements}
G.B. was supported by the Knut and Alice Wallenberg foundation,
KAW grant 2020.0262 %
postdoctoral program in Mathematics for researchers from outside Sweden.
A.R. was supported by Grant 2022-03996 from the Swedish research council, VR.
We thank the anonymous referees for comments that improved the article.

\printbibliography%

\end{document}